\theoremstyle{plain}
\newtheorem{theorem}{Theorem}[section]
\newtheorem{proposition}[theorem]{Proposition}
\newtheorem{lemma}[theorem]{Lemma}
\newtheorem{corollary}[theorem]{Corollary}
\theoremstyle{definition}
\newtheorem{definition}[theorem]{Definition}
\newtheorem{condition}{Condition}
\theoremstyle{remark}
\newtheorem{question}[theorem]{Question}
\theoremstyle{plain}
\newtheorem*{minsingleton}{Corollary \ref{cor:minimal-singleton}}
\newtheorem*{thm:arith-min-w-tree}{Theorem \ref{thm:arith-min-w-tree}}
\newtheorem*{prop:tree-build}{Proposition \ref{prop:tree-build}}
\newtheorem*{prop:no-do-one-jump}{Proposition \ref{prop:no-do-one-jump}}
\let\pTree=\PriorityTree
\NewDocumentCommand{\axset}{D<>{}O{}D<>{#1}d()}{\mathcal{A}^{#3}_{#2}\IfValueTF{#4}{\left(#4\right)}{}}
\let\hat=\widehat
\newcommand*{\copyord}[1]{{#1}^{\Diamond}}
\newcommand*{\copylen}[1]{l^{\Diamond}(#1)}
\newcommand*{\Msucc}{\mathfrak{O}}
 \NewDocumentCommand{\hT}{t-od()}{\hat{T}\IfBooleanTF{#1}{^{-}}{}\IfValueTF{#2}{_{#2}}{}\IfValueTF{#3}{\!\left(#3\right)}{}}
 \newcommand*{\Mout}[1]{\delta^{#1}}
\newcommand*{\Mexts}[1]{\Theta^{#1}}
\newcommand*{\Mext}[2][]{\theta^{#2}_{#1}}
\newcommand*{\subtree}[2]{#1^{\left\langle #2 \right\rangle}}
 \newcommand*{\frciseg}[3][]{\psi_{#2}^{\forces[#1]}\left(#3\right)}
\title{A \texorpdfstring{\( \pizn{2} \)}{\textPi⁰₂} Singleton of Minimal Arithmetic Degree}
\keywords{computability theory,arithmetic degrees,arithmetic reducibility,minimal degree,singleton,REA,arithmetic singleton,implicit definability,Turing reducibility}
\subjclass[2020]{Primary 03D28, 03D30; Secondary (03D60)}
\author{Peter M. Gerdes}
\address{Indiana University, Bloomington\\
Department of Mathematics\\
Bloomington, IN}
\email{gerdes@invariant.org}
\urladdr{http://invariant.org}
\begin{document}

\begin{abstract}
In the study of the arithmetic degrees the \( \REA[\omega] \) sets play a role analogous to the role the r.e. degrees play in the study of the Turing degrees.  However, much less is known about the arithmetic degrees and the role of the  \( \REA[\omega] \) sets in that structure than about the Turing degrees.  Indeed, even basic questions such as the existence of a \( \REA[\omega] \) set of minimal arithmetic degree are open.  This paper makes progress on this question by demonstrating that some promising approaches inspired by the analogy with the r.e. sets fail to show that no \( \REA[\omega] \) set is arithmetically minimal.  Finally, it constructs a \( \pizn{2} \) singleton of minimal arithmetic degree.  Not only is this a result of considerable interest in it's own right, constructions of \( \pizn{2} \) singletons often pave the way for constructions of  \( \REA[\omega] \) sets with similar properties.  Along the way, a number of interesting results relating arithmetic reducibility and rates of growth are established.
\end{abstract}

\maketitle

\section{Introduction}

In the study of the arithmetic degrees (the degree structure induced by relative arithmetic definability, \( \Aleq \)) the \( \REA[\omega] \) sets play a role analogous to the role the r.e. degrees play in the study of the Turing degrees.  This analogy holds both as a matter of structure (e.g. the arithmetic jump is an \( \REA[\omega] \) operation and the jump is an r.e. operation) and as a way to approach constructions.  For instance, just as the r.e. sets allow us to characterize the range of the Turing jump on \( \Tdegrees(\zeroj) \) (the Turing degrees less than or equal to \( \zeroj \)) via the Shoenfield jump inversion \cite{Shoenfield1959On} the \REA[\omega] sets allow us to similarly identify the range of the arithmetic jump on \( \Adegrees(\Azeroj) \) with the degrees of the sets \( \REA[\omega] \) in \( \Azeroj \) \cite{Simpson1985Arithmetic}.

However, while the Turing degrees generally and the degrees of the r.e. sets specifically have been extensively studied much less is known about the arithmetic degrees and even less about the role of the \( \REA[\omega] \) sets in that structure.  Even seemingly basic questions remain open.  For instance, whether or not there are any \( \REA[\omega] \) sets of minimal arithmetic degree remains an open question.  The analogy between the r.e. sets and the Turing degrees and the \( \REA[\omega]  \) sets suggests that \( \REA[\omega] \) sets of minimal arithmetic degree shouldn't exist.  However, it is already known that the analogy is imperfect as there is a minimal pair (in the arithmetic degrees) of \( \REA[\omega] \) sets which join to \( \Azeroj \) \cite{Simpson1985Arithmetic} in contrast to the non-diamond theorem in the r.e. degrees \cite{Lachlan1966Lower}.  

While we don't settle the existence of an \( \REA[\omega] \) set of minimal arithmetic degree in this paper we make what we believe is an important step in that direction by proving the following result (and thereby presenting an alternate solution to question 62 in \cite{Friedman1975One} by providing a \( \pizn{2} \) singleton not arithmetically equivalent to any \( \zeron{\alpha} \)). 

\begin{minsingleton}
    There is a \( \pizn{2} \) singleton of minimal arithmetic degree. 
\end{minsingleton}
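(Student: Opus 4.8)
The plan is to build a $\pizn{2}$ singleton $A$ of minimal arithmetic degree by a forcing/tree-of-trees construction that simultaneously handles two kinds of requirements. The ambient object will be an infinite path $f = \Mpath{}$ through a $\pizn{2}$ priority tree, with $A$ recovered uniformly from $f$; the "singleton" part of the argument amounts to showing the construction can be carried out so that $f$ — hence $A$ — is the unique object satisfying an arithmetic (indeed $\pizn{2}$) condition. Concretely I would arrange that at each level the construction commits to a perfect subtree $\hT_\sigma$ (using the machinery indicated by the \hT and \subtree macros) whose choice is forced by a $\pizn{2}$ fact about the part of $f$ built so far, so that "being the leftmost path obeying all these commitments" is itself $\pizn{2}$. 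This is the standard template for producing $\pizn{2}$ singletons of minimal Turing degree (Sacks/Jockusch–Soare style) lifted to the arithmetic setting, where "computable in" is replaced throughout by "arithmetic in" and finite extensions are replaced by extensions that may encode finitely much jump information.

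The two requirement families are: (i) totality/splitting requirements $R_e$ ensuring that for every arithmetic functional $\Phi_e$, either $\Phi_e^A$ is not total, or $\Phi_e^A$ is arithmetic (equivalently recursive in some $\zeron{n}$), or $A \Aleq \Phi_e^A$; and (ii) a requirement forcing $A$ to be noncomputable-in-any-$\zeron{n}$, i.e. $A$ has genuinely non-arithmetic degree (so its arithmetic degree is an actual minimal cover of $\mathbf{0}$, not $\mathbf{0}$ itself). For (i) the key combinatorial step is the arithmetic analogue of the Spector–Sacks exact-pair/splitting lemma: given a perfect tree $\hT$, either there is a perfect subtree on which $\Phi_e$ is not total (or not consistent), or there is a perfect subtree on which $\Phi_e$ splits — and crucially this dichotomy, together with the choice of witnessing subtree, must be arithmetic in parameters already available to the construction. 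When we are on the "splitting" side, $A$ computes its own position on the tree from $\Phi_e^A$ together with a fixed arithmetic oracle, giving $A \Aleq \Phi_e^A$; when we are on the "partiality" side we win outright; and we need a separate argument (a "no do one jump" type fact, cf. \texttt{prop:no-do-one-jump}) to rule out $\Phi_e^A$ being total but arithmetic while $A$ is not — this is where the rate-of-growth results advertised in the abstract presumably enter, forcing $A$ to dominate or escape functions in a way incompatible with $\Phi_e^A$ coding all of $A$ yet being arithmetic.

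The hard part, and the place where the arithmetic setting departs sharply from the Turing one, is bookkeeping the definability complexity. In the Turing case a minimal-degree perfect tree construction is $\zeroj$-recursive and the resulting path is a $\pizn{1}$ (or $\pizn{2}$) singleton more or less by inspection. Here each application of the splitting/partiality dichotomy for $\Phi_e$ costs (roughly) one more jump, so a naive construction needs $\zeron{\omega}$ and would only give an arithmetic singleton, not a $\pizn{2}$ one. The real work is to organize the construction on a priority tree so that the true path is $\pizn{2}$ despite the construction itself being arithmetically complicated: guesses about which side of each dichotomy holds are $\sigmazn{n}$/$\pizn{n}$ for growing $n$, but the leftmost-path-through-the-tree-of-outcomes is uniformly $\pizn{2}$ because each node's correct outcome is a limit of the outcomes of its predecessors. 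Making this precise — defining the priority tree, proving the true path exists and is $\pizn{2}$, and verifying every requirement acts along it — is the technical heart, and I expect it to be where almost all the difficulty and length of the proof resides; the two combinatorial lemmas (splitting dichotomy, growth-rate obstruction) are comparatively self-contained once the framework is set up.
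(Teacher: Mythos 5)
Your high-level picture — Sacks-style splitting/partiality dichotomy on perfect trees, lifted from Turing to arithmetic reducibility, with the definability bookkeeping as the central difficulty — is the right starting point, but the structural idea that makes the paper's proof go through is missing from your plan, and in its place you have a step that would not work as stated.

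The paper does not run one big priority construction and then argue that its true path is $\pizn{2}$. Your claim that ``the leftmost-path-through-the-tree-of-outcomes is uniformly $\pizn{2}$ because each node's correct outcome is a limit of the outcomes of its predecessors'' is exactly the step that fails: in a construction where the guess at level $n$ is genuinely $\Sigma^0_n$ (which is what the arithmetic splitting dichotomy costs, as you correctly observe), the true path is not $\pizn{2}$ — it is cofinally complex. What the paper does instead is a tower of trees in the style of Harrington's proof of McLaughlin's conjecture: it builds trees $T_\beta \subset \wstrs$ for each $\beta \leq \omega$ with $T_\omega$ a trivial single-path tree computable from $\zeron{\omega}$, and a single \emph{step lemma} (Proposition~\ref{prop:tree-build}, ``double jump inversion with minimality'') producing from any $\jjump{X}$-computable tree $S$ a \emph{computable} tree $T$ plus an $\jjump{X}$-computable homeomorphism $\hT$ of $[S]$ onto $[T]$ satisfying the minimality and low$_2$-over-$X$ properties. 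Iterating this step downward and gluing with the recursion theorem yields a \emph{computable} tree $T_0$ whose unique path is the desired $\pizn{1}$ function singleton; Lemma~\ref{lem:function-to-set} then converts it to a $\pizn{2}$ set singleton. The priority-tree machinery (your ``tree of outcomes'') is wholly internal to the proof of the step lemma, one level at a time, so the complexity never accumulates in the final object.

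You also misattribute the role of two ingredients. The rate-of-growth results of Section~3 are not used in the minimal-singleton proof at all; they are an independent negative result (a hyperimmune minimal arithmetic degree exists) which kills a tempting alternate strategy and motivates the paper. And Proposition~\ref{prop:no-do-one-jump} does not rule out a case in the splitting dichotomy; it shows that \emph{single} jump inversion cannot satisfy the minimality demand, which is why the step lemma must invert two jumps at a time. In the actual splitting argument (Lemma~\ref{lem:tree-build:min}), the case ``$\Phi_e^A$ total, computes $A$, yet is arithmetic'' never arises as a separate worry: on the splitting side one recovers $f$ from $\Phi_e^f \Tplus \jjump{X}$ directly, and on the non-splitting or partiality side $\Phi_e^f$ is $\Tleq X$ or not total — there is no growth-rate input to the verification.
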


While the degrees of \( \REA[\omega] \) sets are properly contained in the degrees of singletons, results about \( \pizn{2} \) singletons have often paved the way for corresponding results about \( \REA[\omega] \) sets, e.g., Harrington's construction of an arithmetically low \( \pizn{2} \) singleton \cite{Harrington1976MclaughlinS} or arithmetically incomparable \( \pizn{2}\) singletons \cite{Harrington1976Arithmetically} both\footnote{These are both unpublished notes that sketch the approach.  See \cite{Simpson2016Implicit} or (draft work) \cite{Gerdes2010HarringtonS} for more rigorous write ups of some of the results.} foreshadowed related constructions of \( \REA[\omega] \) sets with those properties \cite{Simpson1985Arithmetic}.  As the approach taken in this paper draws heavily on the ideas in \cite{Harrington1976MclaughlinS} we hypothesize, contra the analogy with r.e. sets, that there is an arithmetically minimal \( \REA[\omega] \) set and hope the construction here points to a way to build such a set.

\section{Background}

There is a fair amount of notation required for the results in this paper, however, almost all of it is standard.  Readers familiar with standard notation may wish to skip most of the subsections below and return to them only as needed for reference.  However, Section \ref{ssec:rates-of-growth} is worth looking at for all readers as it contains some slightly less common definitions and results.

\subsection{Computations, Strings and Degrees}

We largely adopt standard notation from \cite{Odifreddi1992Classical} which we briefly review.  Set difference is denoted by \( X \setminus Y \), powerset by \( \powset{X} \),   the \( e \)-th set r.e. in \( X \) is  \( \REset(X){e} \) and the \( e \)-th computable functional applied to \( X \) by \( \recfnl{i}{X}{} \).   We denote convergence and divergence by \( \recfnl{i}{X}{y}\conv   \) and \( \recfnl{i}{X}{y}\diverge   \) respectively. Convergence in \( s \)-steps is denoted by \( \recfnl{i}{X}{y}\conv[s] \iff \recfnl[s]{i}{X}{y}\conv    \) and it's negation by \( \recfnl{i}{X}{y}\nconv[s]  \).        

We write elements of \( \bstrs \) and \( \wstrs \) (referred to as strings) like \( \str{x_0, x_1, \ldots, x_{n-1}} \) with \( \estr \) denoting the empty string.   For elements in \( \bstrs, \wstrs, \baire, \cantor \) (identifying sets with their characteristic functions) we denote that \( \sigma \) is (non-strictly) extended by \( \tau \) by \( \sigma \subfun \tau \),  incompatibility by \( \incompat \), compatibility by \( \compat \) and use \( \leftof \) to denote the lexicographic ordering.   For elements of \( \bstrs, \wstrs  \) we denote the length of \( \sigma \) by \( \lh{\sigma} \) and write \( \sigma^{-} \) to indicate the immediate predecessor of \( \sigma \) under \( \subfun \) and write \( \alpha\concat \beta \) to denote \( \alpha \) concatenated with \( \beta \) and write \( \str{i}^n \) to denote \( \str{i} \) concatenated with itself \( n \leq \omega \) times.     We let \( \recfnl{e}{\sigma}{} \) denote the longest string \( \tau, \lh{\tau} \leq \lh{\sigma} \) such that \( \tau(n) = \recfnl[\lh{\sigma}]{e}{\sigma}{n}\conv \) and relativize to define \( \recfnl{e}{\sigma \Tplus X}{} \) in the obvious manner.

We let \( \godelnum{\alpha} \)  denote the canonical bijection of \( \omega^{< \omega} \)  with \( \omega \) where \( \alpha \subfun \beta  \implies \godelnum{\alpha} < \godelnum{\beta} \), \( i < j \implies \godelnum{\alpha\concat[i]} < \godelnum{\alpha\concat[j]} \) and \( \godelnum{\estr} = 0 \).  We regularly gloss over the distinction between strings and their codes as that between sets and their characteristic functions.  We let \( \pair{x}{y} = \frac{1}{2}(x+y)(x+y+1)+y  \) (this is a bijection of \( \omega^2 \) with \( \omega \)) and we let \( \decode{\pair{a}{b}}{0} = a  \) and \( \decode{\pair{a}{b}}{1} = b  \).  We define \( A \Tplus B \),  \( \Tplus_{n \in S} X_n  \), \( \setcol{X}{n} \) and \( \setcol{X}{< n}  \) standardly and extend these operations to strings in the obvious fashion (preserving the identification of \( \cantor \) with \( \powset{\omega} \)).    

A set \( X \) is arithmetic in \( Y \) (written \( X \Aleq Y \)) just if there is a formula in the language of arithmetic  (with a designated set constant)  \( \psi_e \)  such that \( Y \models \psi_e(z) \iff z \in X \) (see \cite{Odifreddi1992Classical} for details).  In this case we write \( \psi_e(Y) = X \).  Recall that an equivalent characterization of arithmetic reducibility is given by \( X \Aleq Y \iff \exists(n)\left(X \Tleq \jumpn{Y}{n} \right) \).   We denote the arithmetic degree of \( \eset \) by \( \Azero \) and, as the arithmetic jump of \( X \) is defined to be \( X^{\omega} \),  that of \( \zeron{\omega} \) by \( \Azeroj \).  An arithmetic degree is minimal just if it has exactly one predecessor under \( \Aleq \).

\subsection{Trees and Forcing}

A tree \( T \) is a \( \subfun \) closed set of strings and \( [T] \) is the set of paths through \( T \).  We define \( \pruneTree{T} = \set{\sigma \in T}{\exists(f \in [T])\left(\sigma \subfun f \right)} \) and call a tree pruned if \( T = \pruneTree{T} \). A node \( \sigma \in T \) is terminal if \( \sigma \) has no extensions in \( T \), branching if it has more than one immediate extension, \( \omega \)-branching if it has infinitely many immediate extensions  and the root of \( T \) if it is the unique \( \subfun \)  least branching node in \( T \).         A tree is \( \omega \)-branching if every branching node is \( \omega \)-branching.  A subtree of \( T \) is a tree \( \hat{T} \subset T \).   

We abuse notation and write \( T\restr{l} \) for \( \set{\sigma \in T}{\lh{\sigma} \leq l}  \).  We write \( \sigma \TreeMul T \) for \(  \set{\sigma\concat\tau}{\tau \in T} \), \( \TreeMod{T}{\sigma}  \) for \(  \set{\tau}{\sigma\concat\tau \in T} \) (with \( \TreeMod{\tau}{\sigma} \) the unique element of \(  \TreeMod{\set{\tau}{}}{\sigma} \) )  and \( \subtree{T}{\sigma} = \sigma \TreeMul \TreeMod{T}{\sigma}  \) called the subtree of \( T \) above \( \sigma \).  
We recall that the standard topology on \( \baire \) is induced by basic open sets  \( [\sigma]  = \set{f}{f \supfun \sigma} \) for \( \sigma \in \wstrs \) and likewise for \( \cantor \).  A set is perfect if it contains no isolated points and we call a tree \( T \) perfect if \( [T] \) is perfect.  

When working with unpruned trees in \( \wstrs \) we can't identify trees (\( \subfun \) closed sets) with \( \subfun \) respecting functions on strings as we'll do over\footnote{In \( \bstrs \) we can, computably in \( T \), find the least extension of \( \sigma \in T \) which either has no extensions in \( T \) or two extensions in \( T \), provided we assume such an extension always exists.} \( \bstrs \). We use the term f-tree for a \( \subfun, \leftof \) respecting partial function  \( T\maps{\wstrs}{\wstrs} \) with a \(  \subfun \) closed domain that preserves longest common initial segments, i.e., \( T(\sigma\concat[n])(\lh{T(\sigma)}) \) is strictly monotonic in \( n \) (on it's domain).  A f-tree \( \hat{T} \) is a subtree of an f-tree \( T \) if \( \rng \hat{T} \subset \rng T \).   An f-tree \( T \) is branching if every non-terminal element in \( \dom T \) has multiple immediate successors, (weakly) \( \omega \)-branching if every non-terminal element in \( \dom T \) has infinitely many immediate successors and  \( \omega \)-branching if whenever \( \sigma \in \dom T \) is non-terminal than \( \sigma\concat[n] \in \dom T \) for all \( n \).  Unless otherwise stated, we assume every f-tree is a branching f-tree and generalize the notions of being pruned, \( \pruneTree{T} \),  \( \TreeMod{T}{\tau} \) , \( \sigma \TreeMul T \) and \( \subtree{T}{\tau}  \) to f-trees in the obvious way.

We write \( \sigma \forces \psi \) to denote forcing over \( \bstrs  \) or \( \wstrs \) and \( \sigma \forces_T \psi \) to denote local forcing on the (pruned) tree \( T \) (see \cite{Odifreddi1992Classical} for details).    A set/function is \( \kappa \)-generic iff it forces either \( \psi \) or \( \lnot \psi \) for every \( \sigmazn{\lambda} \) sentence with \( \lambda < 1 + \kappa \) and weakly \( \kappa \)-generic (note the application at limit ordinals) iff it meets  every dense  \( \sigmazn{\lambda} \) set of strings.   Recall that a set of strings \( W \)  is dense if every string is extended by an element of \( W \).

\subsection{\texorpdfstring{\( \pizn{n} \)}{\textPi⁰n} Classes and \texorpdfstring{\REA[\omega]}{\textomega-REA} sets}

A \( \pizn{n} \) set  class is the set of elements in \( \powset{\omega} \) (identified with \( \cantor \))  that satisfy some  \( \pizn{n} \) formula with a free set  variable.  A \( \pizn{n} \) function  class is defined likewise for elements in \( \baire \).  We will use the term \( \pizn{n} \) class without further specification to refer to a \( \pizn{n} \) set class. We note that if \( n > 0 \) then  \( \mathscr{F} \subset \baire \) is a \( \pizn{n} \) function class iff there is a computable relation \( R \) and a quantifier block \( \forall x \ldots Qy  \) containing \( n \) alternations such that \( f \in \mathscr{F} \)  iff  \(   \forall x \ldots Qy  R(f\restr{y}, x, \ldots, y) \) (and likewise for a \( \pizn{n} \) set class).

An immediate consequence of this fact is that \( \pizn{1} \) classes can be identified with the set of paths through a computable tree.  Interestingly, up to degree, \( \pizn{2} \) classes and \( \pizn{1} \) function classes are equivalent in the following sense.

\begin{lemma}\label{lem:function-to-set}
    Every \( \pizn{1} \) function class is homeomorphic with a \( \pizn{2} \) class via a computable (hence degree preserving) homeomorphism and vice versa.   This holds with all possible uniformity.
\end{lemma}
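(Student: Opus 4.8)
The plan is to give explicit computable codings in each direction and verify that they are homeomorphisms onto classes of the right complexity, watching uniformity throughout.

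For the forward direction, fix a computable tree \( T \subseteq \wstrs \) with \( [T] = \mathscr{F} \) and map \( f \in \baire \) to the set of codes of its initial segments, \( B_f = \set{\godelnum{\sigma}}{\sigma \subfun f} \). First I would observe that \( f \mapsto B_f \) is computed by a total functional: to decide \( \godelnum{\sigma} \in B_f \), decode \( \sigma \), read \( f\restr{\lh{\sigma}} \), and compare. Next I would show the image \( \set{B_f}{f \in [T]} \) is a \( \pizn{2} \) class: a set \( A \) lies in it exactly when \( \godelnum{\estr} \in A \); \( \set{\sigma}{\godelnum{\sigma} \in A} \subseteq T \); that set is downward closed under \( \subfun \); any two of its elements are \( \compat \); and every element of it has a proper \( \subfun \)-extension in it. The first four conjuncts are \( \pizn{1} \) (using that \( T \) is computable) and only the last is \( \pizn{2} \). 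Finally I would check the inverse is computable: from such an \( A \), recover \( f(n) \) by searching for the unique \( \sigma \) with \( \godelnum{\sigma} \in A \) and \( \lh{\sigma} = n+1 \) and reading off \( \sigma(n) \); the search halts because \( \set{\sigma}{\godelnum{\sigma} \in A} \) is a downward closed, unbounded chain. A bijection that is continuous with continuous inverse is a homeomorphism.

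For the reverse direction I would put the \( \pizn{2} \) class in the normal form noted above, \( X \in \mathscr{C} \iff \forall n\, \exists m\, R(X\restr{m}, n) \) with \( R \) computable, let \( m_n(X) \) be the \emph{least} such \( m \), and map \( X \in \mathscr{C} \) to the function \( h_X \) with \( h_X(n) = \pair{X(n)}{m_n(X)} \). The steps: \( X \mapsto h_X \) is computed by a functional total on \( \mathscr{C} \), while \( X(n) = \decode{h_X(n)}{0} \) recovers \( X \) by a total computable operation, so the map is injective with computable inverse; and the image is a \( \pizn{1} \) function class, because ``\( h = h_X \) for some \( X \in \mathscr{C} \)'' asserts that for every \( n \), writing \( m = \decode{h(n)}{1} \) and letting \( X \) read off the first coordinates of \( h \), we have \( R(X\restr{m}, n) \) together with \( \lnot R(X\restr{m'}, n) \) for all \( m' < m \) --- a \emph{computable} predicate of \( h \) and \( n \), since insisting on the least witness eliminates the inner \( \exists m \). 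Hence the image is \( [S] \) for the computable tree \( S \) of strings \( \tau \) not yet seen to violate this condition at any \( n < \lh{\tau} \), and one concludes it is a homeomorphism exactly as before.

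The only real subtlety I anticipate is in the reverse direction: the coding must be arranged so that pinning down witnesses genuinely collapses the inner existential quantifier, leaving a \( \pizn{1} \) image rather than merely a \( \pizn{2} \) one; and since \( \baire \) is not compact, it is the explicit continuous inverses --- not a compactness argument --- that upgrade the continuous bijections to homeomorphisms. Granting this, the uniformity claim is automatic: an index for \( T \) effectively yields a \( \pizn{2} \)-index for the image and indices for both functionals, and symmetrically in the other direction.
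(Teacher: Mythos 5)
Your proof is correct, and your reverse direction is essentially the paper's argument: make the existential witnesses explicit alongside the bits of \( X \) (you pair them coordinatewise; the paper interleaves on even/odd positions), then use \emph{leastness} of the witness to collapse the inner \( \exists \) into a computable predicate, yielding a computable tree \( S \subseteq \wstrs \). Your handling of the leastness point is if anything cleaner than the paper's terse tree definition. Your forward direction, however, is genuinely different. The paper invokes the standard embedding \( \Gamma\!: \baire \hookrightarrow \cantor \) given by \( \Gamma(\sigma\concat[i]) = \Gamma(\sigma)\concat\str{1}^i\concat[0] \), whose range (the \( X \) with infinitely many zeros) is \( \pizn{2} \), and notes that pushing \( [T] \) forward adds only a \( \pizn{1} \) conjunct. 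You instead map \( f \) to the set \( B_f \) of codes of its initial segments and characterize the image by your five conditions, only the last (unboundedness) being \( \pizn{2} \). Both are standard; the paper's is more economical to state once one recalls the embedding, while yours makes the quantifier bookkeeping fully transparent without analyzing the embedding's range. One step worth making explicit in your write-up: to show conditions (1)--(5) characterize the image exactly (not merely contain it), you should observe that for a set \( A \) satisfying them, the recovered \( f \) has \emph{every} initial segment coded in \( A \) --- unboundedness gives arbitrarily long elements, downward closure pushes them down to every length, and compatibility identifies them with initial segments of \( f \); this gives \( B_f \subseteq A \) and hence \( A = B_f \).
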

\begin{proof}
For the first claim, it is enough to note that there is a computable homeomorphism of \( \baire \) with  \( \cantor \setminus \set{\sigma\concat\str{1}^\omega}{\sigma \in \bstrs} \) given by setting  \( \Gamma(\estr) = \estr \) and \( \Gamma(\sigma\concat[i]) = \Gamma(\sigma)\concat\str{1}^i\concat[0] \). 

The other direction is slightly more tricky.  Given a \( \pizn{2} \) class \( \mathscr{C} \) such that \( X \in \mathscr{C} \iff \forall(z)\exists(y)R(X\restr{y}, z, y) \) define \( \Gamma(X)(2z) \) to be the least \( y \) such that \( R(X\restr{y}, z, y) \) and \( \Gamma(X)(2z+1) = X(z) \).  We now define a computable tree \( T \subset \wstrs \) with \( \mathscr{F} = [T] \).  Given \( \sigma \in \wstrs \) with \( \lh{\sigma} \equiv 0 \pmod{2} \) let \( \sigma = \tau \Tplus \epsilon\) and place \( \sigma \in T \) iff \( \forall(l < \lh{\tau})R(\epsilon\restr{l}, \tau(l), l) \).  If \( \lh{\sigma} \equiv 1 \pmod{2} \) then place \( \sigma \in T \) iff either \( \sigma\concat[0] \in T \) or \( \sigma\concat[1] \in T \).  Clearly, \( T \) is a computable tree and \( \Gamma \) is a computable continuous bijection of \( \mathscr{C} \) with \( [T] \) with a continuous inverse on \( [T] \).       
\end{proof}

As the name would suggest, a \( \pizn{n} \) singleton is a set/function that's the only element in a \( \pizn{n} \) class.  Recall that every \( \REA[\omega] \) set is a \( \pizn{2} \) singleton but not vice-versa.  For the interested reader unfamiliar with the \REA[\omega] sets we refer them to \cite{Jockusch1984PseudoJump} but as these sets will primarily play a motivating role in this paper it's enough for the reader to understand that they are the result of effectively iterating the operation  \( X \mapsto  X \Tplus \REset(X){i} \) \( \omega \) many times  (so the \( n +1 \)-st component must be uniformly r.e. in the first \( n \) components).     

\subsection{Ordinal Notations}

We will generalize our main theorem past \( \omega \) to arbitrary ordinal notations.  The reader interested in only claims about arithmetic reductions (such as the headline corollary) can assume that notations only range over \( \omega \union \set{\omega}{} \) as the lemmas required for larger ordinal notations have been exiled to Appendix \ref{app:ordnottech}.  However, some notational conventions, inspired by \cite{Sacks1990Higher}, are still necessary in either case.

Kleene's set of ordinal notations is \( \kleeneO\) with ordering \( \Oless \).  The height of \( \kappa \) is \( \Oabs{\kappa} \).  \( \kleeneO- \) is the set of limit notations, \( \kleeneO+ \) the set of successor notations.    For \( \lambda \) a limit notation we denote the \( n \)-th element of the effectively given increasing sequence defining \( \lambda \)  by \( \Ofunc{\lambda}(n) \).   We elide the differences between finite notations and elements of \( \omega \) as well as that between \( \omega \) and some canonical notation for it.

\subsection{Rates of Growth}\label{ssec:rates-of-growth}

We say that \( g \in \baire \) is \( C \)-escaping  if \( g \) isn't dominated by any \( f \in C \),  \( X \)-escaping for \( X \subset \omega \) if it escapes from \( \set{f}{f \Tequiv X} \) and arithmetically escaping if it escapes from the set of arithmetic functions.  Recall that \( f \) majorizes (dominates)  \( g  \) iff  \(f(x) \geq g(x) \) for  all \( x \) (all but finitely many \( x \)). Following \cite{Odifreddi1992Classical} we draw on the fact that a Turing degree is hyperimmune just if it contains a \( \Tzero \)-escaping function and say that an arithmetic degree is arithmetically hyperimmune just if it contains an arithmetically escaping function.  As there is no notion of a set being arithmetically hyperimmune to cause confusion, we call \( X \) arithmetically hyperimmune just if it is of arithmetically hyperimmune degree.  

It's a well-known fact that if \( A \) is an r.e. set then \( A \) is uniformly computable in any \( g \) majorizing \( m_A(x) = \min \set{s}{A\restr{x} = A_s\restr{x}} \).  Thus, the degree of an r.e. set can be characterized in terms of the rate of growth of a function computable in that degree.   However, it's slightly less well-known that this isn't just true of r.e. sets but of all \( \pizn{2} \) singletons. 

\begin{lemma}\label{lem:uniform-modulus}
If \( f \) is a \( \pizn{1} \) function singleton then every \( g \) majorizing \( f \) uniformly computes \( f \).  If \( X \) is a \( \pizn{2} \) singleton then there is some \( f \Tequiv X \) such that every \( g \) majorizing \( f \) uniformly computes \( X \).   Moreover, if \( X \) isn't arithmetic than it is arithmetically hyperimmune.       
\end{lemma}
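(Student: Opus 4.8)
The plan is to establish the first claim directly, by a bounded search justified through K\"onig's lemma, and to derive the other two as reductions. So suppose first that \( f \) is a \( \pizn{1} \) function singleton, say \( [T] = \{f\} \) for a computable tree \( T \subseteq \wstrs \), and let \( g \) majorize \( f \). I would pass to the \( g \)-bounded subtree \( T^g = \set{\sigma \in T}{(\forall m < \lh{\sigma})\,\sigma(m) \leq g(m)} \), which is computable uniformly from \( g \) and an index for \( T \), is finitely branching, and contains every \( f\restr{\ell} \); thus \( [T^g] \) is a nonempty subset of \( [T] = \{f\} \), so \( [T^g] = \{f\} \). To compute \( f(n) \) from \( g \), search for a length \( \ell > n \) at which all nodes of \( T^g \) of length \( \ell \) take the same value at coordinate \( n \), and output that value; it equals \( f(n) \) since \( f\restr{\ell} \in T^g \), and the procedure is uniform. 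The step needing an argument is that the search terminates. If it did not, then for every \( \ell > n \) there would be a node of \( T^g \) of length \( \ell \) disagreeing with \( f \) at coordinate \( n \) (the value \( f(n) \) itself always occurs, via \( f\restr{\ell} \)), so \( \set{\sigma \in T^g}{\lh{\sigma} \leq n \text{ or } \sigma(n) \neq f(n)} \) would be an infinite, finitely branching subtree of \( T^g \); by K\"onig's lemma it would have a path, which, being a path through \( T^g \), must be \( f \), yet disagrees with \( f \) at coordinate \( n \) --- absurd.

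For the second claim, I would apply Lemma \ref{lem:function-to-set} to a \( \pizn{2} \) class witnessing that \( X \) is a singleton, invoking the uniformity of that lemma to obtain a computable homeomorphism carrying \( X \) to a \( \pizn{1} \) function singleton \( f \). In the notation of that proof, \( f(2z+1) = X(z) \) while the even coordinates of \( f \) are obtained from \( X \) by a search, so \( f \Tequiv X \) with \( X \Tleq f \) uniformly. By the first claim, every \( g \) majorizing \( f \) uniformly computes \( f \), and hence, via \( X(z) = f(2z+1) \), uniformly computes \( X \).

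For the closing assertion, retain this \( f \Tequiv X \); since it lies in the arithmetic degree of \( X \), it suffices to check that \( f \) is arithmetically escaping when \( X \) is not arithmetic. Were \( f \) dominated by some arithmetic function, then correcting that function on the finitely many arguments where it drops below \( f \) would yield an arithmetic \( g \) majorizing \( f \); by the second claim \( X \Tleq g \), so \( X \) would be arithmetic, contrary to hypothesis. Hence the arithmetic degree of \( X \) contains the arithmetically escaping function \( f \), that is, \( X \) is arithmetically hyperimmune. I do not anticipate a genuine obstacle: the one substantive point is the K\"onig's lemma argument in the first claim --- needed because \( T^g \) may contain nodes with no infinite extension, so the termination of the search is not immediate --- and everything else is bookkeeping with Lemma \ref{lem:function-to-set} and the transitivity of arithmetic reducibility.
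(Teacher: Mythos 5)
Your proof is correct and takes essentially the same approach as the paper: both pass to the $g$-bounded finitely-branching subtree and invoke K\"onig's lemma to justify the terminating search, and both derive the second and third claims from the first via Lemma \ref{lem:function-to-set}. You spell out two small points the paper elides---the explicit K\"onig's-lemma contradiction and the majorize-versus-dominate correction in the final claim---but these are details filled in, not a different route.
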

\begin{proof} 
By Lemma \ref{lem:function-to-set} the second and third claims follow from the first.  Now suppose that \( f \) is the unique path through \( [T] \) and that \( f \) is majorized by some arithmetic function \( g \).  Now let \( \hat{T} \) consist of all \( \sigma \in T \) majorized by \( g \) (on \( \dom \sigma \)).  To compute \( f\restr{l} \) from \( g \), search for some \( \sigma, k \) such that \( \lh{\sigma} = l \) and \( \forall(\tau \in \hat{T})\left(\lh{\tau} = k \implies \tau \supfun \sigma \right)  \).  As \( \hat{T} \) is finitely branching and \( [\hat{T}] = \set{f}{} \)  K\"onig's lemma guarantees that such a \( k \) exists.                
\end{proof}

\section{Fast Growing Functions and Minimality}\label{sec:fast-growth-min-arith}

Before we prove Corollary \ref{cor:minimal-singleton} we first consider a seemingly promising, but ultimately futile, approach to proving that no \REA[\omega] set can be of minimal arithmetic degree.  This failure provides  an interesting result about the arithmetic degrees in its own right and illustrates both some of the similarities and differences between the role of the r.e. sets in the Turing degrees and the \REA[\omega] sets in the arithmetic degrees.  As an added bonus it will preview some issues that will arise later and remind readers of the standard construction of a minimal arithmetic degree \cite{Sacks1971Forcing}.

\subsection{Motivation}

One of the most powerful methods to prove results about r.e. sets is to threaten to code one set into another (e.g., see Sack's proof of the Density theorem \cite{Sacks1964Recursively}).  However, translating this approach to the \REA[\omega] sets under \( \Aleq \) faces two serious barriers.   First, the fact we can only place elements into the \( n + 1 \)-th component of an \REA[\omega] set if they are enumerated in an r.e. fashion from the \( n \)-th component.  This makes it very difficult to threaten to code \( X \) into \( Y \) without following through.  Second, the coding would somehow have to control/react to facts about arbitrarily many jumps of \( Y \).   

One potential way to avoid these difficulties with coding is to ignore the details about what elements enter a set and just focus on rate of growth/domination.  Every non-arithmetic \( \pizn{2} \) singleton (and hence \( \REA[\omega] \) set) computes an arithmetically escaping function and we know that non-domination strength is often a good way to build sets of smaller degree, e.g., Kurtz's proof that every hyperimmune degree computes a weak \( 1 \)-generic \cite{Kurtz1981Randomness}.   

 Also, \cite{Andrews2014Degrees} showed that every \( \zeroj \)-escaping function computes a weak \( 2 \)-generic (and is thus not of minimal Turing degree).  While further non-domination strength won't ensure we can compute a weak \( 3 \)-generic an examination of the proof of this claim \cite{Andrews2014Degrees}  suggests that this is more about the ease of avoiding genericity not necessarily a limitation on the computational power of non-domination strength.  This leaves open the possibility that arithmetically escaping functions compute \( \omega \)-generics under some local forcing\footnote{However, a modification of the same argument will allow one to show that no amount of non-domination is enough to compute a generic with respect to local forcing on some \textit{sufficiently definable} pruned perfect tree \( T_e \).  The construction is as before, except now we compute a pair \( T_e, X_e \) from each path on \( T \).  We proceed much as before but with the addition that we achieve immediate victory if we can force \( X_e \) to leave \( T_e \).  However, this leaves open the possibility we compute some \( \omega \)-generic relative to local forcing on some more complicated tree.} or other forcing notion.  Besides, it simply seems intuitively unlikely that a minimal arithmetic degree, a degree which should have the very least amount of computational power, could include a arithmetically escaping function.

This possibility is rendered more plausible by the fact that the the standard construction of a minimal arithmetic degree \cite{Sacks1971Forcing} naturally produces a set which doesn't compute any arithmetically escaping functions. 

\begin{proposition}\label{prop:std-mina-no-escaping}
    Suppose that for every \( n \) there is an arithmetic tree \( T_n \subset \bstrs \) such that \( X \in [T_n] \) and every path through \( T_n \) is  \( n \)-generic with respect to \( \forces[T_n] \)  then \( X \) is arithmetically hyperimmune-free. 
\end{proposition}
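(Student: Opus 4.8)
The goal is to show that every function $g$ with $g \Aleq X$ is majorized by an arithmetic function, which by the definitions of Section~\ref{ssec:rates-of-growth} is exactly the assertion that $X$ is arithmetically hyperimmune-free. So the plan is: fix such a $g$, choose $k$ and an index $e$ with $g = \recfnl{e}{\jumpn{X}{k}}{}$ (a total function), and note that, since $\jumpn{G}{k}$ is uniformly arithmetic in $G$, in the forcing language over the paths of a tree (with the generic $G$ the designated set constant) the sentence $\varphi$ asserting ``$\recfnl{e}{\jumpn{G}{k}}{}$ is total'' is $\pizn{k+2}$, while ``$\recfnl{e}{\jumpn{G}{k}}{x}=v$'' is $\sigmazn{k+1}$ uniformly in $x,v$. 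Then I would apply the hypothesis with any fixed $n\geq k+2$, set $T=T_n$ (which we may take pruned), and use that $X\in[T]$ and that every path of $T$ is $n$-generic for $\forces_T$.

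First I would descend to a condition forcing $\varphi$. Since $g$ is total, $\varphi$ holds of $X$; as $\varphi$ is the negation of a $\sigmazn{k+2}$ sentence, $X$ is $n$-generic with $n\geq k+2$, and an $n$-generic's forcing agrees with truth on $\sigmazn{\lambda}$ and $\pizn{\lambda}$ sentences for $\lambda\leq n$, it follows that $X\forces_T\varphi$. Fix $\sigma\subfun X$ with $\sigma\forces_T\varphi$. The key claim is then: for every path $Y\in[T]$ with $\sigma\subfun Y$ and every $x$ there is $\tau\in T$ with $\sigma\subfun\tau\subfun Y$ and $\tau\forces_T\recfnl{e}{\jumpn{G}{k}}{x}=\recfnl{e}{\jumpn{Y}{k}}{x}$. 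Indeed $Y\forces_T\varphi$ (as $\sigma\subfun Y$), so $\recfnl{e}{\jumpn{Y}{k}}{}$ is genuinely total; hence $Y$ cannot force the $\pizn{k+1}$ sentence ``$\recfnl{e}{\jumpn{G}{k}}{x}\diverge$'' (it would then be true of $Y$), so by $n$-genericity $Y$ forces ``$\recfnl{e}{\jumpn{G}{k}}{x}\conv$'', i.e.\ forces ``$\recfnl{e}{\jumpn{G}{k}}{x}=v$'' for some $v$; taking a witnessing $\tau\subfun Y$ in $T$ long enough that $\sigma\subfun\tau$, and again using that $n$-generic forcing agrees with truth ($\tau\subfun Y$), gives $v=\recfnl{e}{\jumpn{Y}{k}}{x}$.

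The remaining step is the Miller--Martin hyperimmune-free argument run inside this forcing. Let $S_x=\set{\tau\in T}{\sigma\subfun\tau \text{ and } \exists(v)\,\tau\forces_T\recfnl{e}{\jumpn{G}{k}}{x}=v}$, an arithmetic set. By the claim every path of $T$ extending $\sigma$ meets $S_x$; since $T\subseteq\bstrs$ is finitely branching, K\"onig's lemma supplies a least length $\ell_x$ such that every $\tau'\in T$ with $\sigma\subfun\tau'$ and $\lh{\tau'}=\ell_x$ has an initial segment in $S_x$, and $x\mapsto\ell_x$ is found by an arithmetic search. Put
\[
 h(x)=\max\set{v}{\exists(\tau\in T)\,\bigl(\sigma\subfun\tau,\ \lh{\tau}=\ell_x,\ \tau\forces_T\recfnl{e}{\jumpn{G}{k}}{x}=v\bigr)},
\]
an arithmetic function. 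To finish, $X\restr{\ell_x}$ lies in $T$ above $\sigma$ and has an initial segment $\tau^*\in S_x$ with $\sigma\subfun\tau^*\subfun X$; since $\tau^*\subfun X$ its forced value is $\recfnl{e}{\jumpn{X}{k}}{x}=g(x)$, and by monotonicity of forcing so is $X\restr{\ell_x}$'s, so $g(x)$ is among the values in the maximum and $h(x)\geq g(x)$.

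I expect the main obstacle to be the bookkeeping in the middle step --- confirming that forcing the $\pizn{k+2}$ totality sentence forces each convergence instance along every generic path, and that forcing a $\sigmazn{k+1}$ fact along an $n$-generic entails its truth. These are precisely the standard facts behind Sacks's construction of a minimal arithmetic degree; the only twist is that here they are used to squeeze an arithmetic dominating function out of the genericity of all the paths of $T_n$ rather than to thin the tree toward one requirement. It is worth noting that no bound on the arithmetic complexity of $T_n$ (or of $\forces_T$) is needed --- the sole constraint on $n$ is $n\geq k+2$ --- because ``every path meets $S_x$'' is obtained from forcing totality, not from genericity meeting a dense set.
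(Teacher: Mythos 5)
Your proposal is correct and follows essentially the same approach as the paper's proof: both descend to a condition forcing totality of the reduction, observe that every ($n$-generic) path above that condition must force some value at each argument, use finite branching of $T_n \subset \bstrs$ plus K\"onig's lemma to extract a uniform finite bound on where this happens, and take the max of the forced values to get an arithmetic majorant. The only cosmetic differences are that you phrase the reduction as $\recfnl{e}{\jumpn{X}{k}}{}$ rather than as a $\sigmazn{n}$-definable graph, and that you package the K\"onig step via a uniform level $\ell_x$ rather than via a finite covering antichain of $\tau_i$'s; these are interchangeable.
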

\begin{proof}
Suppose \( f \in \baire \) and \( X \models \psi(x,y) \iff f(x) = y \) for some \( \sigmazn{n}  \) sentence \( \psi \).  
We now construct an arithmetic \( g \) majorizing \( f \) as follows.  

Let \( m = n + 2 \) and consider the sentence that asserts \( \psi(x,y) \) defines a total function 
\[ \varphi \eqdef \forall(x)\exists(y)\left(\psi(x,y)\right) \land \forall(x)\forall(y)\forall(y')\left(\psi(x,y) \land  \psi(x,y') \implies y = y'\right) \]  
By assumption, \( X \models \varphi \) and therefore there is some \( \sigma \subfun X, \sigma \in T_m \) with \( \sigma \forces[T_m] \varphi  \).  Let \( \hat{T} \) be the strings in \( T_m \) compatible with \( \sigma \).  

We compute \( g(x) \) by searching for a finite set of pairs \( y_i, \tau_i \) with  \( \tau_i \forces[T_m] \psi(x,y_i)  \) such that every path through \( [\hat{T}] \) extends some \( \tau_i \).  Since all \( Y \in [\hat{T}] \) are \( m \) locally generic every such \( Y \models \varphi  \).  Thus, for all \( Y \in [\hat{T}] \) there are \( x, y \) such that  \( Y \forces[T_m] \psi(x,y)  \).  Thus, there is an set of pairs \( y_i, \tau_i \) as described and the \( \tau_i \) may be taken to be incompatible.   As \( T_m \) is finitely branching by  K\"onig's lemma, there can only be finitely many elements in \( \hat{T} \) extending no \( \tau_i \) ensuring that we can always find a finite collection of such pairs.  Now let \( g(x) \)  be larger than all the \( y_i \) in our set of pairs.    

If \( Y \in [\hat{T}] \) then \( g \) majorizes the function \( f_Y  \) where \( f_Y(x) = y \iff Y \models \psi(x,y) \) and as \( X \in [\hat{T}] \) it follows that \( g \) majorizes \( f \).  As \( T_m \Tgeq \hat{T} \) is arithmetic it follows that \( g \) is an arithmetic function majorizing \( f \).        
   
\end{proof}

The trees \( T_n \) in the above proposition track the trees used in the construction of a minimal arithmetic degree.  Thus, the usual construction of a minimal arithmetic degree produces an arithmetically hyperimmune-free degree.

\subsection{An Arithmetically Hyperimmune Minimal Degree}

Unfortunately, despite the reasons to conjecture that arithmetically hyperimmune functions couldn't be of minimal arithmetic degree, it turns out not to be the case.  Indeed, it turns out that any amount of non-domination strength is compatible with being of minimal arithmetic degree.  This contrasts with the situation in the Turing degrees where no \( \zeroj \) escaping function can be of minimal Turing degree.

\begin{theorem}\label{thm:arith-min-w-tree}
There is a pruned perfect \( \omega \)-branching  f-tree \( T \Tleq \zeron{\omega} \) such that every \( f \in [T] \)  is of minimal arithmetic degree.  
\end{theorem}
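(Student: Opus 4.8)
\emph{Plan.} The strategy is to run an $\omega$-branching version of the classical construction of a set of minimal arithmetic degree — the construction underlying Proposition~\ref{prop:std-mina-no-escaping} — replacing binary trees by $\omega$-branching ones throughout; this is exactly the change that makes Proposition~\ref{prop:std-mina-no-escaping} inapplicable and will let paths of $T$ escape arithmetic domination. I will build a fusion sequence $T^{0}\supseteq T^{1}\supseteq\cdots$ of pruned perfect $\omega$-branching f-trees with $T^{s}\Tleq\zeron{N(s)}$ for a fixed computable $N$, where $T^{s+1}$ is obtained from $T^{s}$ by replacing, above each frontier node $T^{s}(\sigma)$ with $\lh{\sigma}=s$, the subtree $\subtree{T^{s}}{\sigma}$ by a pruned perfect $\omega$-branching subtree $\hat T_{\sigma}$ of it, lengthening the stem if need be but leaving the f-tree values on indices of length $<s$ untouched. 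The first $s$ branching levels are thereby frozen, so $T\eqdef\lim_{s}T^{s}$ is a well-defined pruned perfect $\omega$-branching f-tree, and since $T(\sigma)=T^{\lh{\sigma}+1}(\sigma)$ with $N$ computable, $T\Tleq\Tplus_{s}\zeron{N(s)}\Tequiv\zeron{\omega}$. At stage $s$ I meet, simultaneously above every current frontier node, one requirement: either $N_{e}$, demanding that no $f\in[T]$ equal $\psi_{e}(\eset)$ (handled trivially, since at the first branching node above a frontier node at most one of the $\omega$ successors lies along $\psi_{e}(\eset)$, so it is discarded); or $R_{e}$, demanding that for every $f\in[T]$ the set $\psi_{e}(f)$ be arithmetic or satisfy $f\Aleq\psi_{e}(f)$. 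As $\psi_{e}$ ranges over all arithmetic operators and the $\psi_{e}(\eset)$ exhaust the arithmetic reals, meeting all the $R_{e}$ and $N_{e}$ makes every $f\in[T]$ of minimal arithmetic degree.

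To meet $R_{e}$ above a frontier node, write $\psi_{e}(X)=\recfnl{j}{\jumpn{X}{n}}{}$ for fixed $j,n$ and first pass to a subtree along which $X\mapsto\jumpn{X}{n}$, and hence $\psi_{e}$, is continuous with an arithmetic parameter (the standard `forcing the jump' construction, done binary and re-expanded as below); this costs only $n+O(1)$ jumps, so $N$ stays computable. On such a tree $\psi_{e}$ has monotone finite approximations on strings and the classical splitting dichotomy applies: either some subtree above a node is $e$-nonsplitting, in which case continuity forces $\psi_{e}(f)$ to be the single value obtained by unioning those approximations along any path — computable from the tree, hence arithmetic — so take $\hat T_{\sigma}$ to be that ($\omega$-branching) subtree; or there is a \emph{binary} $e$-splitting subtree $B$, incomparable nodes of which force incompatible approximations to $\psi_{e}$. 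In the latter case I re-expand $B$ to an $\omega$-branching f-tree $\hat T_{\sigma}$ by declaring the successors of each node $B(\nu)$ to be $\set{B(\nu\concat\str{1}^{k}\concat\str{0})}{k\in\omega}$: these lie on pairwise $\subfun$-incomparable nodes of $B$, hence pairwise $e$-split, and one checks that $\hat T_{\sigma}$-incomparable nodes remain $B$-incomparable, so $\hat T_{\sigma}$ is $\omega$-branching with incomparable nodes $e$-splitting. For $f\in[\hat T_{\sigma}]$, continuity and $e$-splitting give $\nu\subfun f\iff\psi_{e}(\nu)\subfun\psi_{e}(f)$ for nodes $\nu$ of $\hat T_{\sigma}$, so $f$ is recovered from $\psi_{e}(f)$ by searching $\hat T_{\sigma}$ for initial segments whose approximation to $\psi_{e}$ is an initial segment of $\psi_{e}(f)$, whence $f\Tleq\psi_{e}(f)\Tplus\zeron{N(s)}$ and $f\Aleq\psi_{e}(f)$.

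It remains to assemble the stages: each graft is a legal f-tree because $\hat T_{\sigma}(\estr)\supfun T^{s}(\sigma)$, so the strict monotonicity of the children of the parent index of $\sigma$ is inherited from $T^{s}$; $\rng T\subseteq\rng T^{s}$, so $[T]\subseteq[T^{s}]$ and every requirement holds of every $f\in[T]$; and $T$ is pruned and perfect. The main obstacle, and the one feature genuinely new relative to the classical single-real construction, is keeping every tree $\omega$-branching through every thinning: both the $e$-splitting step and the jump-forcing step naively yield binary trees, and it is exactly the $\str{1}^{k}\concat\str{0}$ antichain re-expansion that restores $\omega$-branching while preserving the recovery of $f$ from $\psi_{e}(f)$ — one must verify that this re-expansion, the jump-forcing step, and the $N_{e}$ thinnings compose without ever collapsing a branching node, and that each stage's jump cost is uniformly and computably bounded so that indeed $T\Tleq\zeron{\omega}$. (From $T$ one then extracts an arithmetically hyperimmune set of minimal arithmetic degree by choosing, at each branching node along a suitable path, a successor overshooting a prescribed arithmetic function.)
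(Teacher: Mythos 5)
Your overall plan---fusion of pruned perfect $\omega$-branching f-trees, a jump-forced splitting/non-splitting dichotomy applied above each frontier node---is the right one, and it matches the paper's (which phrases the dichotomy via local forcing and ``$e$-fsplitting'' rather than Turing functionals on iterated jumps, a mostly cosmetic difference). You also correctly identify the real obstacle: keeping the trees genuinely $\omega$-branching through the splitting step. Unfortunately the mechanism you propose for that step fails, and it is exactly the step the paper spends most of Lemma~\ref{lem:e-deciding-subtree} on.

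The antichain re-expansion $\hat T_\sigma(\tau\concat[k]) = B(\phi(\tau)\concat\str{1}^{k}\concat\str{0})$ does not produce an $\omega$-branching f-tree in the paper's sense. For every $k\geq 1$ the string $\phi(\tau)\concat\str{1}^{k}\concat\str{0}$ extends $\phi(\tau)\concat[1]$, so all the declared successors $\hat T_\sigma(\tau\concat[k])$, $k\geq 1$, extend $B(\phi(\tau)\concat[1])$ and therefore agree at position $\lh{B(\phi(\tau))}=\lh{\hat T_\sigma(\tau)}$. Hence $\hat T_\sigma(\tau\concat[n])\bigl(\lh{\hat T_\sigma(\tau)}\bigr)$ takes only two distinct values as $n$ ranges over $\omega$ --- the one from $B(\phi(\tau)\concat[0])$ and the one from $B(\phi(\tau)\concat[1])$ --- so it is not strictly monotonic in $n$ and $\hat T_\sigma$ fails the defining condition on f-trees. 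Equivalently, the underlying set of strings $\rng\hat T_\sigma$ is still only $2$-branching at every branching node. This is not a technicality: it defeats Corollary~\ref{cor:hi-arith-min}, since at $\hat T_\sigma(\tau)$ one cannot choose a successor with a large value at the first new coordinate (only two values occur there), so paths through $T$ would \emph{not} escape arithmetic domination --- the opposite of what you set out to arrange. The same defect infects the ``done binary and re-expanded'' jump-forcing step.

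The fix is to construct the $\omega$-branching $e$-splitting subtree directly, and this is genuinely delicate: after fixing $\hat T_\sigma(\tau\concat[i])$, you must be sure there remain \emph{infinitely} many pairwise-incompatible candidate initial segments above $\hat T_\sigma(\tau)$ each of which still admits an extension $e$-splitting with $\hat T_\sigma(\tau\concat[i'])$ for all $i'\leq i$. The paper's Lemma~\ref{lem:e-deciding-subtree}, Case~2, does exactly this bookkeeping with the pools $S^{\sigma}_{s}$: on finding a splitting pair $\tau_0,\tau_1$ above the current candidate, it shows one of the two sides must continue to admit infinitely many of the remaining candidates as future splits (because $\psi_e(T(\tau_0))$ and $\psi_e(T(\tau_1))$ disagree, any candidate whose forced initial segment of $\psi_e$ becomes long enough must split off from one of them), and it keeps only that side. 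That argument --- not a post-hoc re-expansion of a binary tree --- is what supplies the $\omega$-branching.
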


We will break the proof of this theorem up into a sequence of lemmas.  However, before we do that let's first verify that the theorem actually provides the desired (arguably undesired) arithmetically minimal, arithmetically hyperimmune degree.

\begin{corollary}\label{cor:hi-arith-min}
There is a arithmetically minimal degree \( \mathbf{a} \) that is of arithmetically hyperimmune degree.  Indeed, for any countable \( C \subset \baire \) there is a minimal arithmetic degree \( \mathbf{a} \) containing a \( C \) escaping member.  
\end{corollary}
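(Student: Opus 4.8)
The plan is to read this off directly from Theorem \ref{thm:arith-min-w-tree}. Fix the pruned perfect \( \omega \)-branching f-tree \( T \) furnished by that theorem. Since every \( f \in [T] \) is already of minimal arithmetic degree, the entire task reduces to producing a single path \( f \in [T] \) that is \( C \)-escaping; then \( \mathbf{a} \) can be taken to be the arithmetic degree of \( f \). The first assertion will follow as the special case in which \( C \) is the set of all arithmetic functions (which is countable, there being only countably many arithmetic formulas): an \( f \) escaping this \( C \) is by definition arithmetically escaping, hence its arithmetic degree is arithmetically hyperimmune, and that degree is minimal.

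To construct \( f \) I would run a routine finite-extension argument up \( T \). Enumerate \( C \) as \( \set{g_n}{n \in \omega} \) and fix a function \( s \mapsto n_s \) taking each value infinitely often. Build a \( \subfun \)-increasing sequence \( \sigma_0 \subfun \sigma_1 \subfun \cdots \) in \( \dom T \) with \( \sigma_0 = \estr \), and set \( f = \bigcup_s T(\sigma_s) \). At stage \( s+1 \), let \( \ell = \lh{T(\sigma_s)} \); because \( T \) is pruned and \( \omega \)-branching, \( \sigma_s \) is non-terminal, every \( \sigma_s\concat[m] \) lies in \( \dom T \), and the f-tree condition forces \( T(\sigma_s\concat[m])(\ell) \) to be strictly increasing in \( m \), hence unbounded. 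So I may choose \( m \) with \( T(\sigma_s\concat[m])(\ell) > g_{n_s}(\ell) \) and set \( \sigma_{s+1} = \sigma_s\concat[m] \). Since \( T \) respects \( \subfun \), the images \( T(\sigma_s) \) have strictly increasing length, so the \( \sigma_s \) determine a path through \( \dom T \) and \( f = \bigcup_s T(\sigma_s) \in [T] \). For each \( n \), the infinitely many stages \( s \) with \( n_s = n \) supply infinitely many positions \( \ell_s = \lh{T(\sigma_s)} \), tending to infinity, at which \( f(\ell_s) > g_n(\ell_s) \); hence no \( g_n \) dominates \( f \), so \( f \) is \( C \)-escaping, and by Theorem \ref{thm:arith-min-w-tree} it is of minimal arithmetic degree.

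I do not expect any genuine obstacle: Theorem \ref{thm:arith-min-w-tree} carries all the weight, and this deduction uses only the soft fact that a branching node of an \( \omega \)-branching f-tree has branching values that are unbounded, which is immediate from the definition of an f-tree. The single point requiring care is the bookkeeping around which coordinate spreads out at \( \sigma_s \), namely the one at position \( \lh{T(\sigma_s)} \); one must check that these positions increase without bound along the construction, so that the diagonalization defeats domination (infinitely often \( f > g_n \)) rather than merely majorization — and this holds automatically, since the \( \sigma_s \), and hence the lengths \( \lh{T(\sigma_s)} \), strictly increase.
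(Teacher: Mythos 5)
Your proof is correct and follows essentially the same route as the paper: fix the tree from Theorem \ref{thm:arith-min-w-tree} and run a finite-extension argument up it, using the unbounded branching values at each node to defeat domination. The only cosmetic difference is that the paper first collapses the countable family \( C \) to a single dominating function \( h \) and then escapes \( h \) infinitely often (which suffices, since if some \( g_n \) dominated \( f \) then so would \( h \)), whereas you diagonalize against each \( g_n \) directly via the bookkeeping function \( s \mapsto n_s \); both are minor variants of the same argument.
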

\begin{proof}
The first claim follows from the second by taking \( C \) to be the collection of arithmetic \( f \in \baire \).  For the second claim, fix a function \( h \)  dominating every element of \( C \) and then build a path \( f \) through the tree in Theorem \ref{thm:arith-min-w-tree} by always extending \( \sigma \) to \( \sigma\concat[m] \) such that \( T(\sigma\concat[m])\conv \) and escapes \( h \) one more time.            
\end{proof}

Recall from the construction of a minimal Turing degree that 

\begin{definition}\label{def:e-splitting}
The strings \( \tau_0, \tau_1 \) \( e \)-split if \( \recfnl{e}{\tau_0}{} \incompat  \recfnl{e}{\tau_1}{} \)   
\end{definition}

and that in that construction we built a sequence of computable trees \( T_e \subset \bstrs \) with \( T_{e+1} \) a subtree of \( T_e \)   such that one of the following obtains (here we identify pruned, perfect binary trees and the corresponding f-trees).

\begin{enumerate}
    \item\label{enum:min-deg-partial} (Partiality) \( \forall(f \in [T_e])\left(\recfnl{e}{f}{}\diverge\right) \)
    \item\label{enum:min-deg-nonsplitting} (Non \( e \)-splitting) for all \( \tau, \tau' \in T_e \), \( \tau, \tau' \) don't \( e \)-split  
    \item\label{enum:min-deg-e-splitting} (\( e \)-splitting) For all \( \sigma \in T_e \), \( T_e(\sigma\concat[0]) \) and  \( T_e(\sigma\concat[1]) \)  \( e \)-split. 
\end{enumerate}

 We then build \( f \in \Isect_{e \in \omega} [T_e] \) ensuring that either \( \recfnl{e}{f}{} \) is partial (\ref{enum:min-deg-partial}), computable (\ref{enum:min-deg-nonsplitting}) or computes \( f \) (\ref{enum:min-deg-e-splitting}).  We adopt the same general approach, but, to handle arithmetic reductions rather than Turing reductions we'll replace the notion of \( e \)-splitting with an analogue based on local forcing (as in the construction of a minimal arithmetic degree from \cite{Sacks1971Forcing}).  We'll then adjust this construction to allow us to build \( \omega \)-branching trees.   First, however, we introduce notation to represent an analogue of partial application of a functional for forcing.

 \begin{definition}\label{def:forcing-iniseg}
 Given a notion of forcing \( \forces \) a condition \( \sigma \) and a sentence \( \psi \) with a single free (number) variable let \( \frciseg{}{\sigma}  \) denote the longest string \( \tau \in \bstrs \) such that \( n \in \dom \tau \) implies \( \sigma \forces \psi(n) \land \tau(n) = 1 \) or  \( \sigma \forces \lnot\psi(n) \land \tau(n) = 0 \).  We extend this in the obvious way to infinite paths.
 \end{definition}

 In other words, \( \psi^{\forces}(\sigma) \) represents the initial segment of  \( \psi(A)  \) whose values have been determined for \( A \supfun \sigma \)  (assuming \( A \) is sufficiently generic).  

 \begin{definition}\label{def:e-forced-splitting}
 If \( \psi_e \) is a \( \sigmazn{n} \) or \( \pizn{n} \) formula with a single free variable then
    \begin{enumerate}
    \item A pair of strings \( \tau, \tau' \) \( e \)-fsplits on \( T \)  just if \( \frciseg[T]{e}{\tau} \incompat \frciseg[T]{e}{\tau} \).
    \item A pruned f-tree \( T \)  is totally non-\( e \)-fsplitting if there are no \( \tau, \tau' \) in the image of \( T \) that \( e \)-fsplit.  
    \item A pruned f-tree is totally \( e \)-fsplitting if whenever \(\sigma \in \wstrs,  n \neq n' \) then \( T(\sigma\concat[n])  \),   \( T(\sigma\concat[n'])  \) \( e \)-fsplits whenever both are defined.  
    \item A pruned f-tree \( T \)  is \( e \)-deciding if it is either totally non-\( e \)-fsplitting or totally \( e \)-fsplitting and every path through \( T \) is \( n \) generic with respect to \( \forces_T \).  
    \end{enumerate} 
 \end{definition} 

 \begin{lemma}\label{lem:e-deciding-nomin}
 If \( T:\wstrs \mapsto \wstrs \) is a pruned \( e \)-deciding  f-tree, \( f \in [T] \) and \( X = \psi_e(f) \) then \( X \) is either arithmetic in \( T \) or \( f \) is arithmetic in \( X \Tplus T \).       
 \end{lemma}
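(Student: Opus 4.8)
The plan is to split into the two cases given by the definition of an $e$-deciding tree and handle each separately, mirroring the classical minimal-degree argument but with local forcing in place of $e$-splitting. First suppose $T$ is totally non-$e$-fsplitting. I claim $X = \psi_e(f) \Aleq T$. The idea is that since no two strings in the image of $T$ $e$-fsplit on $T$, the values $\frciseg[T]{e}{\sigma}$ for $\sigma$ ranging over the image of $T$ form a $\subfun$-directed family, so they have a common ``limit'' string (possibly infinite) that is the same no matter which path we travel along. Concretely, to decide whether $n \in X$, search (arithmetically in $T$, since $\forces_T$ for a $\sigmazn{n}$ or $\pizn{n}$ formula is arithmetic in $T$) for some $\sigma$ in the image of $T$ with $n \in \dom\left(\frciseg[T]{e}{\sigma}\right)$; if such a $\sigma$ exists, output $\frciseg[T]{e}{\sigma}(n)$, and by the non-fsplitting hypothesis this value is independent of the choice of $\sigma$ and agrees with $X(n)$ whenever $f$ is sufficiently $T$-generic — but here I need to be a little careful, because it is conceivable that no $\sigma \subfun f$ ever decides $\psi(n)$. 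However, genericity of $f$ is not assumed in this case; instead I should argue that if some string in the image of $T$ decides $\psi(n)$ one way, then by directedness every string that decides it decides it the same way, and moreover $X(n)$ must equal that value because $X = \psi_e(f)$ is a genuine set and the forced value propagates to $f$. If no string in the image of $T$ decides $\psi(n)$, I'll want to show this cannot happen for $X$ to be well-defined, or handle it by a density/fullness argument — this is the delicate point. The cleanest route is probably: since $T$ is pruned, restrict to $\subtree{T}{\sigma_0}$ below some fixed node and note local forcing over this subtree is still arithmetic in $T$; the totally non-$e$-fsplitting condition then forces $\psi^{\forces_T}$ to be constant across the whole tree image, hence computes $X$ from $T$ in the limit.

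Now suppose $T$ is totally $e$-fsplitting and every path through $T$ is $n$-generic with respect to $\forces_T$. I claim $f \Aleq X \Tplus T$. The argument is the standard tree-climbing one: build up $f$ one node at a time, using $X \Tplus T$ as an oracle. Having determined $T(\sigma) \subfun f$ for some $\sigma$, I look among the immediate successors $T(\sigma\concat[m])$ (for those $m$ with $T(\sigma\concat[m])\conv$) for the unique one along which $\psi_e$ agrees with $X$. Because $T$ is totally $e$-fsplitting, any two distinct immediate successors $e$-fsplit, i.e.\ their $\frciseg[T]{e}{-}$ values are incompatible; and because $f$ is $n$-generic on $T$, the value $\psi_e(f)$ is genuinely an infinite set (all the relevant $\psi(n)$ get decided along $f$), so exactly one branch can be consistent with $X$ on the incompatible coordinate witnessing the fsplit. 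Searching (arithmetically in $T$, to compute $\frciseg[T]{e}{-}$, and reading off $X$) for that branch and iterating recovers $f$ from $X \Tplus T$ arithmetically. I should note the search always succeeds: the correct immediate successor of $T(\sigma)$ — namely the one that is an initial segment of $f$ — does $e$-fsplit from all its siblings and is consistent with $X$, so it will be found, and by incompatibility of the fsplit witnesses no wrong branch can masquerade as it.

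The main obstacle I expect is the totally non-$e$-fsplitting case, specifically ensuring that $X(n)$ is actually determined by some string in the image of $T$ for every $n$ — without assuming genericity of $f$ in this case. The resolution should come from observing that $X = \psi_e(f)$ is defined via a $\sigmazn{n}$/$\pizn{n}$ formula evaluated at the real $f$, and that whether $n \in \psi_e(f)$ is a $\Sigma$ or $\Pi$ fact about $f$; combined with $f \in [T]$ and $T$ pruned, one can push the (possibly partial) forcing information along $f$ and, using that the tree is non-fsplitting so the partial information is globally coherent, conclude that the set of $n$ left undecided is itself arithmetic in $T$ and that on those $n$ we may simply compute $X(n)$ directly from $f$ — but $f$ need not be available. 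The honest fix is likely to strengthen the bookkeeping: replace ``longest string decided by $\sigma$'' with the union over all $\sigma$ in the image of $T$, show this union is a well-defined element of $\cantor$ arithmetic in $T$ by coherence, and argue it must equal $X$ because every coordinate of $X$ is forced by \emph{some} condition (any condition extending far enough along $f$), which does lie in the image of $T$. Getting this propagation precisely right — and checking the arithmetic-in-$T$ bound on the relevant forcing relation, which is where the $\sigmazn{n}$/$\pizn{n}$ hypothesis on $\psi_e$ is used — is the crux of the proof.
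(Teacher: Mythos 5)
Your two-case decomposition and the substance of each case match the paper's proof, but you have misread the definition of an $e$-deciding f-tree. The clause ``every path through $T$ is $n$-generic with respect to $\forces_T$'' applies to \emph{both} alternatives, not only to the totally $e$-fsplitting one. This is how the paper's proof opens (``As every path through $T$ is $n$-generic\dots'') and how Lemma \ref{lem:e-deciding-subtree} builds the $e$-deciding subtree: it first secures the splitting/non-splitting dichotomy, then passes to a generic refinement regardless of which case obtained.

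Consequently the obstacle you flag as ``the crux'' --- that in the non-fsplitting case some coordinate of $X = \psi_e(f)$ might never be decided by any $\tau \subfun f$ in the image of $T$ --- does not arise. By $n$-genericity of $f$ with respect to $\forces_T$, for every $m$ some initial segment $\tau \subfun f$ in $\rng T$ forces $\psi_e(m)$ or $\lnot\psi_e(m)$, and forcing agrees with truth along $f$; total non-$e$-fsplitting then makes the family $\frciseg[T]{e}{\tau}$, $\tau \in \rng T$, pairwise compatible, so its union is an element of $\cantor$ equal to $X$ and arithmetic in $T$. Note that your proposed ``honest fix'' secretly invokes exactly the genericity you claimed was unavailable: the statement ``every coordinate of $X$ is forced by some condition extending far enough along $f$'' \emph{is} the genericity fact. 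So the sketch you give is circular as written, though the underlying argument is correct once you recognize that the genericity hypothesis is in force in both cases. Your treatment of the totally $e$-fsplitting case (tree-climbing using $X$ to pick the unique consistent branch at each level) is fine and matches the paper.
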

 \begin{proof}

 As every path through \( T \) is \( n \)-generic with respect to \( \forces[T] \) if \( f \forces[T] \psi_e(x) \) then \( x \in \psi_e(f) \). Hence, if \( T \) is totally non-\( e \)-fsplitting then (the characteristic function for) \( X \) is the union of \( \frciseg[T]{e}{\tau} \) for \( \tau \) in the image of \( T \).   If \( T \) is totally \( e \)-fsplitting and \( X = \psi_e(f) \) then \( f \) is the union of \( \tau \) in the image of \( T \) with  \( \frciseg[T]{e}{\tau}  \) compatible with \(  X \).  As forcing for arithmetic sentences is arithmetic the conclusion follows.      
 \end{proof}

 We now prove the key lemma we'll use to establish Theorem \ref{thm:arith-min-w-tree}.  

 \begin{lemma}\label{lem:e-deciding-subtree}
Given \( e \) and  a pruned \( \omega \)-branching  perfect f-tree \( T\!: \wstrs \mapsto \wstrs \) there is a pruned \( \omega \)-branching  perfect \( e \)-deciding  subtree \( \hat{T} \) uniformly arithmetic in \( T \).  
 \end{lemma}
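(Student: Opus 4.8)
The plan is to run the Spector subtree dichotomy from the construction of a minimal Turing degree --- in its local-forcing incarnation from \cite{Sacks1971Forcing} --- for the notion of \( e \)-fsplitting, while dragging two extra requirements through the whole argument: the subtree \( \hat{T} \) must remain pruned, \( \omega \)-branching and perfect, and \emph{every} path through it must be \( n \)-generic for the local forcing \( \forces[\hat{T}] \) on \( \hat{T} \) itself. I would build \( \hat{T} \) by a recursion on \( \wstrs \) in which \( \hat{T}(\rho) \) is always chosen in \( \rng T \) and, whenever \( \rho \) is kept, the images \( \hat{T}(\rho\concat[i]) \) are infinitely many \( \leftof \)-increasing proper extensions of \( \hat{T}(\rho) \) drawn from \( \rng T \). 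This bookkeeping alone makes \( \hat{T} \) a pruned \( \omega \)-branching perfect f-tree and a subtree of \( T \), so the real content is steering the recursion to make \( \hat{T} \) \( e \)-deciding, and doing it by a procedure arithmetic in \( T \) of complexity bounded in terms of \( n \) (hence uniformly, and uniformly in \( e \)).

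I would first resolve the dichotomy arithmetically: ask whether there is some \( \tau \in \rng T \) such that no two nodes of \( \rng T \) extending \( \tau \) \( e \)-fsplit on \( T \). Since ``\( \mu_0, \mu_1 \) \( e \)-fsplit on \( T \)'' unwinds to \( \frciseg[T]{e}{\mu_0} \incompat \frciseg[T]{e}{\mu_1} \), and forcing of arithmetic sentences over the arithmetic tree \( T \) is itself arithmetic (the remark after Lemma \ref{lem:e-deciding-nomin}), this is an arithmetic-in-\( T \) question of bounded level. If the answer is yes, take the least such \( \tau \) and build \( \hat{T} \) as a pruned \( \omega \)-branching perfect subtree of \( \subtree{T}{\tau} \); then no two nodes of \( \rng\hat{T} \) \( e \)-fsplit on \( T \), and once forcing on \( \hat{T} \) is reconciled with forcing on \( T \) (below) neither do they \( e \)-fsplit on \( \hat{T} \), so \( \hat{T} \) is totally non-\( e \)-fsplitting. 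If the answer is no --- every node of \( T \) carries an \( e \)-fsplitting pair above it --- then above any committed \( \hat{T}(\rho) \) one can repeatedly split to produce infinitely many pairwise \( e \)-fsplitting extensions (the usual ``split, recurse into the right-hand node, split again'' argument, using monotonicity of \( \frciseg[T]{e}{\cdot} \)), so the recursion never jams and \( \hat{T} \) is totally \( e \)-fsplitting. Interleaved with these steps, before taking the \( \omega \) successors at a node I extend inside \( T \) to settle the next \( \sigmazn{\lambda} \) (\( \lambda \le n \)) forcing requirement, so that every path of \( \hat{T} \) ends up \( n \)-generic for \( \forces[\hat{T}] \); in the non-splitting case this genericity also forces \( \psi_e(f) = \bigcup_{\sigma \in \rng\hat{T}} \frciseg[\hat{T}]{e}{\sigma} \) for every \( f \in [\hat{T}] \), which is \( \Aleq \hat{T} \Aleq T \), exactly as Lemma \ref{lem:e-deciding-nomin} wants. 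In either case \( \hat{T} \) is \( e \)-deciding.

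The genuinely delicate step --- where I expect essentially all the work to lie --- is the circularity in ``settle the next forcing requirement'': the requirements, and the genericity they buy, are relative to \( \forces[\hat{T}] \), which is not available during the construction. I would handle this as in the Turing analogue, by making \( \hat{T} \) generic for the forcing whose conditions are the arithmetic pruned \( \omega \)-branching perfect subtrees of \( T \): at each node one \emph{decides} requirements using \( \forces[T] \), which \emph{is} available, and whenever an existential instance \( \exists x\,\theta(x) \) is forced positively one also commits a later node of \( \hat{T} \) that explicitly realizes \( \theta \); dovetailing this over all node/sentence pairs and arguing by induction on the complexity of \( \varphi \) up to level \( n \) yields the forcing-preservation statement \( \hat{T}(\rho) \forces[\hat{T}] \varphi \iff \hat{T}(\rho) \forces[T] \varphi \) for all kept nodes. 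That single fact does double duty: it makes every path of \( \hat{T} \) be \( n \)-generic for \( \forces[\hat{T}] \), and it transfers the \( e \)-fsplitting / non-\( e \)-fsplitting behaviour arranged with respect to \( \forces[T] \) over to \( \forces[\hat{T}] \). Carrying out this preservation argument carefully, and checking that committing the existential witnesses is always compatible with keeping every kept node \( \omega \)-branching, is the heart of the proof; the rest is the standard minimal-degree bookkeeping transposed to f-trees.
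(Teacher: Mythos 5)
Your overall plan — settle the non-$e$-fsplitting/$e$-fsplitting dichotomy arithmetically in $T$, build a subtree of the appropriate type, and then arrange $n$-genericity — matches the paper's. But you have misidentified where the real difficulty lies, and your argument for the $e$-fsplitting case has a genuine gap.

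You treat the genericity/forcing-preservation issue as ``the heart of the proof.'' The paper dispatches it in one sentence by first producing the totally (non-)$e$-fsplitting $\omega$-branching subtree $\hat{T}$, then passing to a pruned perfect subtree $\tilde{T}$ all of whose paths are $n$-generic for $\forces[\hat{T}]$, and finally noting that $\sigma \forces[\hat{T}]\psi \implies \sigma \forces[\tilde{T}]\psi$ (so that $T$-genericity transfers to $\tilde{T}$-genericity and the $e$-deciding property persists). You do not need your stronger two-way preservation $\forces[\hat{T}]\varphi \iff \forces[T]\varphi$, only the easy direction, and you certainly do not need the perfect-tree fusion forcing you describe; the one-directional transfer plus genericity on the ambient tree already gives everything.

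The step where the actual work happens — and where your proposal breaks down — is producing a perfect \emph{$\omega$-branching} totally $e$-fsplitting subtree in the second case of the dichotomy. Your ``split, recurse into the right-hand node, split again'' argument produces a nested sequence $\tau_0$, then $\tau_1', \tau_2 \supfun \tau_1$, then $\tau_2', \tau_3 \supfun \tau_2$, and so on; after the first step all of these extend the \emph{same} immediate successor of $\hat{T}(\rho)$. These cannot serve as the infinitely many immediate children of $\hat{T}(\rho)$ in an f-tree, since an f-tree requires the images $\hat{T}(\rho\concat[i])$ to be pairwise incompatible at position $\lh{\hat{T}(\rho)}$, i.e.\ to extend distinct immediate extensions $T(\rho')\concat[j]$. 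The genuine obstruction, which the paper states explicitly, is that once you commit $\hat{T}(\rho\concat[0]) = \tau_0$ there may be immediate successors of $\hat{T}(\rho)$ in $T$ above which \emph{no} extension $e$-fsplits with $\tau_0$, so a greedy choice can jam. The paper's resolution — when choosing a candidate, locate an $e$-fsplitting pair $\tau_0, \tau_1$ above it, partition the remaining candidate branches into those with extensions $e$-fsplitting with $\tau_0$ versus $\tau_1$, observe (using the fact that an $e$-fsplitting above any candidate must $e$-fsplit with at least one of $\tau_0,\tau_1$) that one of these collections is infinite, and keep only that collection as the future pool $S^\sigma_s$ — is exactly the content your proposal is missing.
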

 \begin{proof}
We first observe that given a pruned perfect f-tree \( T \) there is a pruned perfect  subtree \( \tilde{T} \) where every path through \( \tilde{T} \) is \( n \)-generic with respect to \( \forces[T] \).  Moreover, we note that since \( \sigma \forces[T] \psi \implies \sigma \forces[\tilde{T}] \psi \) every path is also \( n \)-generic with respect to \( \tilde{T} \) and if \( T \) was either totally non-\( e \)-fsplitting or totally \( e \)-fsplitting and \( \psi_e \in \sigmazn{n} \union \pizn{n} \)  than \( \tilde{T} \) is \( e \)-deciding.  As we can take \( \tilde{T} \) to be arithmetic in \( T \),   it is enough to demonstrate that there is pruned \( \omega \)-branching  perfect subtree of \( T \),   \( \hat{T} \) arithmetic in \( T \)  that is either totally non-\( e \)-fsplitting or totally \( e \)-fsplitting.          

We consider two cases.

\begin{pfcases}

\case\label{lem:e-deciding-subtree:case:non-splitting}  Suppose some \( \sigma \in \rng T \) isn't extended by any  \( e \)-fsplit \( \tau_0, \tau_1 \in \rng T \).  Let  \( \hat{T} = \subtree{T}{\sigma} \), the subtree of \( T \) above \( \sigma \).  Clearly, \( \hat{T} \) is totally non-\( e \)-fsplitting.

\case Suppose Case \ref{lem:e-deciding-subtree:case:non-splitting} doesn't hold.  We now seek to build a perfect pruned \( \omega \)-branching totally \( e \)-fsplitting subtree \( \hat{T} \) of \( T \).  Absent the need to be \( \omega \) branching we could simply search for an \( e \)-fsplitting pair extending \( \hat{T}(\sigma) \) on \( T \) to define  \( \hat{T}(\sigma\concat[0]), \hat{T}(\sigma\concat[1]) \).  However, if we then later tried to define \( \hat{T}(\sigma\concat[i]) \) we might be unable to choose a value for \( \hat{T}(\sigma\concat[i]) \) which \( e \)-fsplits with \( \hat{T}(\sigma\concat[0]) \).  We could further extend \( T(\sigma\concat[0]) \) to find a splitting but we risk having to do this infinitely often.  Instead, we make sure that when we pick a value for \( \hat{T}(\sigma\concat[0]) \) it \( e \)-fsplits with extensions of infinitely many other strings of the form  \( \hat{T}(\sigma)\concat[i] \).

To define \( \hat{T}(\sigma\concat[1]) \) we essentially repeat the above process and choose one of the infinitely many extensions which \( e \)-fsplits with \( \hat{T}(\sigma\concat[0]) \) that can be extended to \( e \)-fsplit with (extensions of) an infinite subset of those allowed extensions.  The seemingly daunting details spelled out below are just the bookkeeping needed to allow us to repeatedly select a value for \( \hat{T}(\sigma\concat[i]) \) and ensure that we have an infinite set of options that \( e \)-fsplit with \( \hat{T}(\sigma\concat[i']), i' \leq i \) to use for  \( \hat{T}(\sigma\concat[i']), i' > i \) (and to do this for every \( \sigma \)).

We build an f-tree \( V\!: \wstrs \mapsto \wstrs \) and define \( \hat{T} = T \mathbin{\circ} V \).  Our construction of \( V \) will proceed in stages.  At stage \( s \) we will  define \( V(\sigma) \) where \( s = \godelnum{\sigma} \) (remember, that \( \sigma \subfun \tau \implies \godelnum{\sigma} < \godelnum{\tau}  \)).  At stage \( 0 = \godelnum{\estr} \) we begin by defining \( V(\estr) = \estr \). Note that, since \( i < j \implies \godelnum{\sigma\concat[i]} < \godelnum{\sigma\concat[j]} \) if \( \godelnum{\sigma\concat[n]} = s \) then we've already defined \( V(\sigma\concat[m]), m < n \).    

Once we've defined \( V(\sigma) \) we maintain a set \( S^{\sigma}_s \) of extensions of \( \sigma \) representing possible initial segments of \( V(\sigma\concat[i])  \) for \( V(\sigma\concat[i])  \) not yet defined at \( s \).  To ensure that \( \hat{T} \) remains \( \omega \) -branching we ensure that all elements in \( S^{\sigma}_s \) extend incompatible immediate extensions of \( \sigma \).  Unless otherwise specified, \( S^{\sigma}_{s+1} = S^{\sigma}_s \).    

 Suppose that at stage \( s \) we are working to define \( V(\sigma\concat[n]) \), i.e., \( \godelnum{\sigma\concat[n] } = s  \).     Let \( \tau \) be the lexicographically least element of \( S^{\sigma}_s \) and \( U = S^{\sigma}_s  \setminus \set{\tau}{}  \).  Let \( \tau_0, \tau_1 \supfun \tau \) be such that \( T(\tau_0), T(\tau_1) \) \( e \)-fsplit.  Such strings must exist or Case \ref{lem:e-deciding-subtree:case:non-splitting} would have obtained.  Let \( U_i, i \in \set{0,1}{} \) be the set of \( \subfun \) minimal \( \upsilon \) extending some element in \( U \) such that \( T(\upsilon) \)  and \( T(\tau_i) \)  \( e \)-fsplit.  Let \( i \in \set{0,1}{} \) by the least such that \( U_i \) is infinite.  Since the `images' of   \( T(\tau_0)  \) and \( T(\tau_1) \) under \( \psi_e \) disagree such an \( i \) must exist.   

 Set \( V(\sigma\concat[n]) = \tau_i \) and \( S^{\sigma}_{s+1} = U_i \), initialize \( S^{\sigma\concat[n]} \) to \( \set{\tau_i\concat[m]}{m \in \omega} \) and proceed to stage \( s + 1 \).  This completes our definition of \( V \) and clearly defines a pruned f-tree \( \hat{T} = T \circ V \) arithmetically in \( T \).  

 We claim that  \( \hat{T} \) is a totally \( e \)-fsplitting perfect \( \omega \)-branching subtree of \( T \).  Our definition of \( S^{\sigma}_s \) ensures that \( \hat{T} \) is \( \omega \)-branching and that \( \hat{T} \) is perfect. Since whenever we define \( V(\sigma\concat[n])  \) we limit \( T(V(\sigma\concat[n'])), n' > n \) to extensions of strings which   \( e \)-fsplit with \( T(V(\sigma\concat[n])) \) it follows that \( \hat{T} \) is totally \( e \)-fsplitting.

\end{pfcases}

As these cases are exhaustive, this suffices to complete the proof.
      
 \end{proof}

 We can now complete the proof of the theorem.

\begin{thm:arith-min-w-tree}
There is a pruned perfect \( \omega \)-branching  f-tree \( T \Tleq \zeron{\omega} \) such that every \( f \in [T] \)  is of minimal arithmetic degree.  
\end{thm:arith-min-w-tree}

 \begin{proof}
 Iteratively applying Lemma \ref{lem:e-deciding-subtree} would immediately suffice to produce a minimal arithmetic degree.  However, we wish to end up with an \( \omega \)-branching tree of such degrees.  To that end, we set \( T_{0}  \) to be the identity function on \( \wstrs \) and inductively define \( T_{n+1}\restr{n} = T_n\restr{n} \) (i.e. equal when applied to strings of length at most \( n \)) and if \( \lh{\sigma} = n \) and \( T^{\sigma}_{n+1} \) is the \( n \) -deciding subtree of \( \subtree{T_n}{T_n(\sigma)} \) produced by Lemma \ref{lem:e-deciding-subtree} then  \( T_{n+1}(\sigma\concat\tau) = T_n(\sigma) \TreeMul T^{\sigma}_{n+1} \).

 Now let \( T \) be the limit of this process, i.e., \( T(\sigma) = T_{\lh{\sigma}}(\sigma) \).  \( T \) is a perfect pruned \( \omega \)-branching f-tree.  Since we defined \( T_{n+1} \) in a uniform arithmetic fashion from \( T_n \) we have \( T \Tleq \zeron{\omega} \).  Finally, if \( f \in [T] \) and \( X = \psi_e(f) \) and \( \sigma = f\restr{e+1} \)  then \( \TreeMod{f}{\sigma} \in [T^{\sigma}_{e+1}]  \) and thus, as \( T^{\sigma}_{e+1} \) is arithmetic, by Lemma \ref{lem:e-deciding-nomin} either \( X  \) is arithmetic or \( \TreeMod{f}{\sigma} \Tequiv f \) is arithmetic in \( X \).  
 \end{proof}

 \subsection{Fast Growth and Definability}

 In retrospect, perhaps we shouldn't be too surprised by the result in the last section.  After all, there are minimal Turing degrees of hyperimmune degree (e.g. any minimal degree below \( \zeroj \)).  And maybe we don't have to completely give up the idea of using non-domination strength to show that no \( \REA[\omega] \) set can be of minimal arithmetic degree.    While, \textit{surprisingly}, unlike a true \( 1 \)-generic, a weak \( 1 \)-generic can be of minimal Turing degree (a minimal degree below \( \zeroj \) can't be hyperimmune-free and thus computes a  weak \( 1 \)-generic which, by virtue of being non-computable, must be of that very minimal degree), we were still able to use non-domination strength to demonstrate the non-existence of minimal r.e. degrees by identifying a property (weak \( 1 \)-genericity) that enough non-domination strength would less us satisfy (compute) but which couldn't hold of any set of r.e. degree.  Perhaps we could similarly show that every arithmetically escaping function computes a non-arithmetic set with a property that guarantees it's not of \( \REA[\omega] \) degree.

 What might play the role of this property in the arithmetic degrees?  The result in \cite{Andrews2014Degrees} tells us that it can't be \( \omega \)-genericity but the following lemma suggests a different way of generalizing the idea that more non-domination strength should somehow allow us to compute less definable sets.

 \begin{lemma}\label{lem:generic-no-singleton}
If \( X \subset \omega \) is \( n \)-generic with respect to local forcing on some perfect tree (or weakly \( n \)-generic ) then \( X \) isn't a \( \pizn{n} \) singleton.  Similarly, no \( n \)-generic \( f \in \baire \) is a \( \pizn{n} \) function singleton.      
 \end{lemma}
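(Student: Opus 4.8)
The plan is to argue by contradiction, handling the two displayed hypotheses uniformly; the case of a \( \pizn{n} \) function singleton \( f \in \baire \) is verbatim the same, with \( \wstrs, \baire \) replacing \( \bstrs, \cantor \). Suppose then that \( X \) is the unique element of a \( \pizn{n} \) class \( \mathscr{C} = \set{Y}{Y \models \varphi} \) for some \( \pizn{n} \) formula \( \varphi \); we may take \( n \geq 1 \), a \( \pizn{0} \) class being clopen while \( \cantor \) has no isolated points. Write the \( \sigmazn{n} \) formula \( \lnot\varphi \) as \( \exists x\,\chi(x) \) with \( \chi \in \pizn{n-1} \), let \( \forces \) be the relevant notion of forcing (Cohen forcing on \( \bstrs \), or local forcing \( \forces_T \) on the perfect tree \( T \)), and set \( D = \set{\sigma}{\sigma \forces \lnot\varphi} \). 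Because \( \sigma \forces \exists x\,\chi(x) \) iff \( (\exists x)(\sigma \forces \chi(x)) \), and forcing a \( \pizn{n-1} \) sentence is itself \( \pizn{n-1} \) (relative to \( T \), in the local case), \( D \) is \( \sigmazn{n} \).

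The first step is to show that \( D \) is dense among conditions. If not, some condition \( \sigma_0 \) has no extension in \( D \), i.e.\ no condition extending \( \sigma_0 \) forces \( \lnot\varphi \). Now the sets that are \( n \)-generic for \( \forces \) and extend \( \sigma_0 \) form a comeager subset of the nonempty perfect Polish space \( [\sigma_0] \) (respectively \( [\subtree{T}{\sigma_0}] \) — this is where perfectness of \( T \) is used), so there exist two distinct such sets \( Y_0 \neq Y_1 \). Each \( Y_i \), being \( n \)-generic, forces \( \varphi \) or \( \lnot\varphi \); it cannot force \( \lnot\varphi \), since by monotonicity of \( \forces \) the witnessing condition — being compatible with \( \sigma_0 \) — would yield an extension of \( \sigma_0 \) forcing \( \lnot\varphi \). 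Hence \( Y_i \forces \varphi \), so by the forcing--truth lemma (\( \varphi \in \pizn{n} \), \( Y_i \) is \( n \)-generic) \( Y_i \models \varphi \), i.e.\ \( Y_i \in \mathscr{C} = \set{X}{} \), forcing \( Y_0 = X = Y_1 \), a contradiction. So \( D \) is dense.

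Next, \( X \) meets \( D \): this is immediate from the definition when \( X \) is weakly \( n \)-generic, and otherwise holds because any \( n \)-generic set meets every dense \( \sigmazn{n} \) set of conditions. Fix \( \sigma \subfun X \) with \( \sigma \forces \lnot\varphi \), and then an \( x \) with \( \sigma \forces \chi(x) \). Finally, \( X \) is at least \( (n-1) \)-generic: an \( n \)-generic set is, and a weakly \( n \)-generic set is too, since to decide a \( \sigmazn{n-1} \) sentence \( \psi \) it suffices to meet the dense set \( \set{\tau}{\tau \forces \psi \text{ or } \tau \forces \lnot\psi} \), which is \( \sigmazn{n} \). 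Since \( \chi(x) \in \pizn{n-1} \), \( \sigma \subfun X \), \( \sigma \forces \chi(x) \), and \( X \) is \( (n-1) \)-generic, the forcing--truth lemma gives \( X \models \chi(x) \), hence \( X \models \exists x\,\chi(x) \), i.e.\ \( X \models \lnot\varphi \). But then \( X \notin \mathscr{C} \), contradicting that \( X \) is the sole element of \( \mathscr{C} \).

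The crux, I expect, is the bookkeeping of quantifier levels in the weakly \( n \)-generic case. Such an \( X \) need not decide the \( \pizn{n} \) sentence \( \varphi \) at all, so the argument cannot simply say ``\( X \) decides \( \varphi \)''; instead it must extract a \emph{single} \( \pizn{n-1} \) consequence \( \chi(x) \) of ``\( X \) is not the intended singleton'', which the genuine — if only \( (n-1) \)-level — genericity of \( X \) can certify, while the density of \( D \), a purely combinatorial fact about strings, is obtained with the aid of auxiliary honest \( n \)-generics rather than \( X \) itself. Keeping these two genericity levels separate, and observing that perfectness of \( T \) is exactly what delivers the two distinct generics above \( \sigma_0 \), are the delicate points; everything else is routine forcing.
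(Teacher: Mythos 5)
Your proof is correct and essentially the paper's argument: you introduce the $\sigmazn{n}$ set of conditions forcing $\lnot\varphi$ (the paper's $S$), show it is dense by exploiting perfectness to produce auxiliary $n$-generics that would otherwise give a second element of $\mathscr{C}$, and then use that $X$ is at least $(n-1)$-generic and meets the dense set to conclude $X \models \lnot\varphi$. The paper handles the locally $n$-generic case a bit more directly (from $X\restr{l}\forces_T\psi$ and a second $n$-generic above $X\restr{l}$), reserving the density-of-$S$ argument for the weakly $n$-generic case, whereas you route both cases uniformly through density of $D$; this is a packaging difference, not a different method.
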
  
 \begin{proof}
Suppose that \( X \) is the unique set such that \( X \models \psi \) for some \( \pizn{n} \) formula (with a set constant) \( \psi \).  By \( n \)-genericity  we must have \( X\restr{l} \forces[T] \psi \) for some \( l \).  As \( T \) is perfect there is some \( n \)-generic path  \( Y \supfun X\restr{l}, Y \neq X \) through \( T \).  But, by \( n \)-genericity \( Y \models \psi \) contradicting the fact that \( X \) was the unique solution.

To show the claim holds for weakly \( n \)-generic sets suppose that \( \psi = \forall(x) \Psi(x) \).  Now let \( S = \set{\sigma}{\exists(x)\left( \sigma \forces \lnot \Psi(x) \right)} \).  \( S \) is a \( \sigmazn{n} \) set and if no element in \( S \) extended \( \tau \) then \( \tau \forces^{w} \psi \) and, as above, would contradict the uniqueness of \( X \).  Hence, \( S \) is a \( \sigmazn{n} \) dense set of strings and, as every  weak \( n \) generic is \( n - 1 \)-generic and \( \lnot \Psi \in \pizn{n - 1}  \), if \( X \) is a weak \( n \)-generic meeting \( S \) then \( X \models \lnot \psi \).        

The argument for function singletons proceeds identically.

 \end{proof} 

In this light, we can think of the results from \cite{Andrews2014Degrees} as showing us that any \( \Tzero \)-escaping function computes a set that's not a \( \pizn{1} \) singleton and any \( \zeroj \)-escaping function computes a set that's not a \( \pizn{2} \) singleton\footnote{Since \( \pizn{2} \) singletons are closed under Turing equivalence, we could say not of \( \pizn{2} \) singleton degree.}.  We leave it as an exercise to demonstrate that the techniques in \cite{Andrews2014Degrees} show that sufficient non-domination strength allows us to compute a set that's not a \( \pizn{3} \) singleton.  Thus, a plausible conjecture is that an arithmetically escaping function computes a set that's not an arithmetic singleton (i.e. not a \( \pizn{n} \) singleton for any \( n \)).  If true, this would prove that no \( \REA[\omega] \) set is on minimal arithmetic degree.

\begin{proposition}\label{prop:no-arith-singleton-no-minimal}
    If every arithmetically escaping function \( f \) can arithmetically define a set that's not an arithmetic singleton  then  no \( \pizn{2} \) singleton, and hence no \( \REA[\omega] \) set,  is of minimal arithmetic degree.    
\end{proposition}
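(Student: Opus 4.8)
The plan is a short definition-chase combining Lemma~\ref{lem:uniform-modulus} with the standard fact that the class of arithmetic singletons is closed under arithmetic equivalence. Fix a \( \pizn{2} \) singleton \( X \); I will show \( X \) is not of minimal arithmetic degree. If \( X \) is arithmetic then its arithmetic degree is \( \Azero \), which is not minimal, so I may assume \( X \) is non-arithmetic. By the last clause of Lemma~\ref{lem:uniform-modulus}, \( X \) is then arithmetically hyperimmune, so the arithmetic degree of \( X \) contains an arithmetically escaping function \( g \). Applying the hypothesis of the proposition to \( g \), there is a set \( Y \Aleq g \) that is not an arithmetic singleton; and since \( g \) and \( X \) have the same arithmetic degree, \( Y \Aleq X \).

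Now suppose toward a contradiction that \( X \) is of minimal arithmetic degree. Since \( Y \Aleq X \), the arithmetic degree of \( Y \) is then either \( \Azero \) or the arithmetic degree of \( X \). In the first case \( Y \) is arithmetic, hence an arithmetic singleton — indeed \( \{Y\} \) is an arithmetic class, \( Y \) being the unique \( W \) with \( \forall z\,(z \in W \leftrightarrow \chi(z)) \) for a parameter-free arithmetic \( \chi \) defining \( Y \) — contradicting the choice of \( Y \). In the second case \( Y \) has the same arithmetic degree as \( X \), and \( X \), being a \( \pizn{2} \) singleton, is an arithmetic singleton, so the closure fact again yields that \( Y \) is an arithmetic singleton, the desired contradiction. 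Finally, since every \( \REA[\omega] \) set is a \( \pizn{2} \) singleton, the assertion about \( \REA[\omega] \) sets follows at once.

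It remains to check the closure fact, and this — together with the routine observation that composing arithmetic reductions and substituting an arithmetic operator into an arithmetic formula both stay arithmetic — is really the only point needing care; there is no genuine obstacle. Suppose \( A \) is the unique solution of an arithmetic formula \( \phi \) (with a set constant), and that \( A \Aleq B \) and \( B \Aleq A \) are witnessed by \( A = \psi_i(B) \) and \( B = \psi_j(A) \) respectively. Then \( B \) is the unique \( Z \) satisfying \( \psi_i(Z) \models \phi \) together with \( \psi_j(\psi_i(Z)) = Z \): for \( Z = B \) both conjuncts hold, since \( \psi_i(B) = A \models \phi \) and \( \psi_j(A) = B \); conversely \( \psi_i(Z) \models \phi \) forces \( \psi_i(Z) = A \) by the uniqueness of \( A \), so the second conjunct gives \( Z = \psi_j(A) = B \). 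Substituting the arithmetic definition of \( \psi_i(Z) \) into \( \phi \) and writing out \( \psi_j(\psi_i(Z)) = Z \) in the language of arithmetic then exhibits this as an arithmetic condition on the free set variable \( Z \), so \( B \) is an arithmetic singleton.
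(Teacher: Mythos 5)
Your proof is correct and takes essentially the same approach as the paper: get an arithmetically escaping \( g \Aequiv X \) from Lemma~\ref{lem:uniform-modulus}, apply the hypothesis to obtain \( Y \Aleq X \) that is not an arithmetic singleton, then use minimality to force \( Y \) arithmetic or \( Y \Aequiv X \), each contradicting the choice of \( Y \). The only difference is presentational — you isolate and prove the closure of arithmetic singletons under arithmetic equivalence as a standalone fact, whereas the paper carries out that construction inline.
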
  
\begin{proof}
Suppose, for contradiction, \( X \) is a  \( \pizn{2} \) singleton of minimal arithmetic degree. By Lemma \ref{lem:uniform-modulus} \( X \) computes an arithmetically escaping \( f \). Let \( Y \Aleq f \) such that \( Y \) isn't an arithmetic  singleton as per the proposition.   If \( Y \) were arithmetic then \( Y \) would be a \( \sigmazn{n} \) set for some \( n \) and thus an arithmetic singleton.  Therefore, we must have \( Y \Aequiv X \).  Thus, for some arithmetic formulas \( \psi, \Psi \) we have  \( \psi(X) = Y \land  \Psi(Y) = X \) and thus \( Y \) is the unique solution of the arithmetic formula which asserts that \( \Psi(Y) \) satisfies the \( \pizn{2} \) formula defining \( X \) and that \( \psi(\Psi(Y)) = Y \).  Contradiction.                
\end{proof}

\section[A Minimal Singleton]{A Arithmetically Minimal \texorpdfstring{ \( \pizn{2} \)}{\textPi⁰₂} Singleton}\label{sec:min-singleton}

We will now prove that our seemingly plausible conjectures (once again) fail and that there is a \( \pizn{2} \) singleton of minimal arithmetic degree.   Our approach to proving this borrows substantially from Harrington's proof of McLaughlin's conjecture \cite{Harrington1976MclaughlinS}.  As it's easier to work with computable trees than \( \pizn{2} \) classes we'll work in \( \wstrs \).  By Lemma \ref{lem:function-to-set} it will be enough to build a computable tree \( T \subset \wstrs \) with a single path \( f \) of minimal arithmetic degree.  However, doing this in a single step would be dauntingly difficult so we instead break up our construction into steps.  

Specifically, the primary task will be to prove the following proposition.

\begin{proposition}\label{prop:tree-build}
Given a (potentially partial)  tree \(  S \subset \wstrs \) computable in \( \jjump{X} \)  there is a (total) computable tree \( T \subset \wstrs \) and an \( \jjump{X} \) computable partial f-tree \( \hat{T} \) such that
    \begin{enumerate}
        \item\label{prop:tree-build:hat-t} \( \rng \hat{T} \subset T \) and \( [\hat{T}] = [T] \) 
        \item\label{prop:tree-build:homeo}  \( \hat{T}(\cdot) \) is a homeomorphism of \( [S] \) with \( [T] \).
        \item\label{prop:tree-build:non-compute}  If \( f \in [T] \) then \( f \nTleq X \).
        \item\label{prop:tree-build:low2}  If \( g \in [S] \) then \( g \Tplus \jjump{X} \Tequiv \jjump{\left(\hT(g) \Tplus X\right)} \Tequiv \hT(g) \Tplus   \jjump{X}  \).
        \item\label{prop:tree-build:min} If \( f \in [T] \) and \( Y \Tleq f \Tplus X  \) then either \( Y \Tleq X \) or \( f \Tleq Y \Tplus \jjump{X} \).

        \item\label{prop:tree-build:len} For all \( \sigma \in \bstrs \),  \( \lh{\hT(\sigma)} \geq \lh{\sigma} \) (when defined).

    \end{enumerate}
    Moreover, this holds with all possible uniformity.  In particular, given a computable functional \( \Upsilon_2 \) we can effectively produce functionals \( \Upsilon, \hat{\Upsilon} \) so that whenever \( \Upsilon_2(\jjump{X}) =S \) then  \( \Upsilon(X)=T  \) and \(  \hat{\Upsilon}(\jjump{X}) = \hat{T} \) with the properties described above.   

\end{proposition}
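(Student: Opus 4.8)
The plan is to carry out, on a level-by-level basis, a forcing-with-trees construction in which the computable tree $T$ records the "positions" reached by a $\jjump{X}$-computable tree of conditions $\hat T$, exactly mirroring the strategy Harrington uses for McLaughlin's conjecture. At stage $n$ we will have committed to a finite f-tree $\hat T\restr{n}$ whose image consists of finitely many strings of $T$; the key invariant is that $\hat T(\sigma)$ is always a node of $S$ (so that $[\hat T]\subseteq[T]$ is automatically a homeomorphic copy of $[S]$, giving \eqref{prop:tree-build:hat-t} and \eqref{prop:tree-build:homeo} and, by always extending enough, \eqref{prop:tree-build:len}). The stage-$n$ work handles the $n$-th requirement, where the requirements are interleaved as usual: a \emph{non-computation} requirement forcing $f\neq\recfnl{e}{X}{}$ (for \eqref{prop:tree-build:non-compute}), a \emph{jump-coding} requirement ensuring the value of $\jjump{(\hat T(g)\oplus X)}$ on the $n$-th question is decided by $\hat T\restr{n}$ in a way $g\oplus\jjump{X}$ can read back (for \eqref{prop:tree-build:low2}), and a \emph{splitting/non-splitting} requirement for the $n$-th functional $\recfnl{e}{f\oplus X}{}$, where — following the minimal-degree template — we either pass to a subtree on which $\recfnl{e}{f\oplus X}{}$ is forced partial or constant (so $Y\Tleq X$), or we thin to an $e$-splitting subtree (so $f\Tleq Y\oplus\jjump{X}$, since $\jjump{X}$ can ask which branch at each splitting node agrees with $Y$). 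That last dichotomy is \eqref{prop:tree-build:min}.

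The subtlety that makes this more than a routine mash-up of the minimal-degree and low$_2$-permitting arguments is that $S$ is only \emph{partial}, computable in $\jjump X$, and we must output a \emph{total} computable $T$. So I would define $T$ directly as a computable object whose nodes are "guesses" at the finitely much of $\hat T$ that has been certified, and let $\hat T$ be the $\jjump X$-computable map that, at a node $\sigma$, searches (using $\jjump X$) for the true stage at which that guess is confirmed and for the genuine $S$-node it corresponds to. Nodes of $T$ that are never confirmed simply have no extensions declared live, so $[T]=[\hat T]$; this is the standard device for turning a $\jjump X$-approximable tree into a computable tree with the same paths, and it is where the low$_2$ bookkeeping lives. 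Property \eqref{prop:tree-build:low2} then reduces to checking that each of the three kinds of stage-$n$ action (a) only depends on $\jjump X$ and finitely much of the construction, and (b) is "permitted" — i.e. the action we want to take is eventually stably available — which is the content of a standard $\jjump X$-is-low$_2$-relative-to-$\hat T(g)\oplus X$ lemma; concretely one shows $g\oplus\jjump X$ can recover the entire stagewise construction along $g$ and hence compute $\jjump{(\hat T(g)\oplus X)}$, and conversely.

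For the uniformity clause, every step above is given by an effective procedure with oracle $\jjump X$ (for $\hat T$) or no oracle at all (for $T$), and the only input used is the index $\Upsilon_2$ for the functional computing $S$; so an application of the recursion theorem / $s$-$m$-$n$ produces indices $\Upsilon,\hat\Upsilon$ with $\Upsilon(X)=T$ and $\hat\Upsilon(\jjump X)=\hat T$ whenever $\Upsilon_2(\jjump X)=S$, and the verification of (1)--(6) goes through verbatim since nothing in it referred to a particular $S$.

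The main obstacle I expect is \eqref{prop:tree-build:min} in the presence of $\omega$-branching and partiality simultaneously: when we want an $e$-splitting subtree we face the same difficulty already met in Lemma \ref{lem:e-deciding-subtree} — having fixed one immediate successor we must still be able to choose infinitely many further $e$-splitting successors — but now the ambient tree $S$ is only $\jjump X$-partial, so "searching for a splitting pair" may not terminate, and we must carry along, as part of the finite data at each node, a $\jjump X$-certified reservoir of candidate extensions that are guaranteed to lie in $S$ and to pairwise $e$-split, exactly the reservoir bookkeeping of Lemma \ref{lem:e-deciding-subtree} but threaded through the $\jjump X$-approximation of $S$. Making that reservoir maintenance compatible with the low$_2$ permitting for \eqref{prop:tree-build:low2} — so that the reservoir we commit to is the one $g\oplus\jjump X$ will see — is the delicate point, and it is where I would spend the bulk of the proof.
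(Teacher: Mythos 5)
Your plan captures the right shape at the highest level — a computable $T$ that records ``guesses,'' a $\jjump X$-computable $\hat T$ recovered from the true run, and an interleaving of non-computability, jump-coding, and splitting requirements — and this is indeed the route the paper takes (via a $\Pi^0_2$-style priority tree $\pTree$ with modules $\module{P}{e}$, $\module{L}{e}$, $\module{L}[n]{e}$, $\module{H}{\sigma}$, $\module{H}[+]{\sigma}$, $\module{S}[n]{e}$). However, there is a genuine gap at the step you describe as ``the standard device for turning a $\jjump X$-approximable tree into a computable tree with the same paths.'' That step is not standard and is in fact the crux. The construction cannot know which guesses are wrong — that is a $\jjump X$-question — so it will keep acting on wrong guesses forever, and the danger is precisely that infinitely many of these false-belief actions cohere to produce an unintended infinite path through $T$ that is \emph{not} in the range of $\hat T$. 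Nothing in your sketch rules this out. The paper spends substantial machinery on exactly this point: a never-reinitializing priority tree (Condition \ref{cond:no-reinit}), a separation condition (Condition \ref{cond:mod-seperation}) ensuring each module only enumerates extensions of its own assigned output, and Lemmas \ref{lem:path-single-modules}, \ref{lem:path-mext-modules}, \ref{lem:ht-yields-paths} which show that every node off the true path receives only finitely many extensions in $T$. In particular the module $\module{H}{\sigma}$ is designed so that wrong guesses about membership in $S$ move strictly to the right and strand all previously built strings, and $\module{S}[n]{e}$ is designed so that abandoned splitting outcomes have a fixed length bound $1+\max(\lh{\tau_0},\lh{\tau_1})$ beyond which they contribute nothing. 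Your ``never confirmed nodes have no live extensions'' claim is the conclusion of this analysis, not a device one can invoke.

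Two smaller but real issues. First, your description of the jump-coding requirement as generic ``low$_2$ bookkeeping'' omits the mechanism the paper actually needs: the modules $\module{L}{e}$ and $\module{L}[n]{e}$ implement a pseudo-link so that the question ``is $\recfnl{e}{f\Tplus X}{}$ total for all $f\in[T]$ above this node'' gets a $\Pi^0_2$-readable outcome, which is what makes $g\Tplus\jjump X\Tgeq\jjump{(\hT(g)\Tplus X)}$ provable (Lemma \ref{lem:tree-build:low2-reqs-work}). Without saying what the outcomes are and how they survive the approximation of $S$, the reduction does not follow. Second, you write that ``the key invariant is that $\hat T(\sigma)$ is always a node of $S$'' — this is inverted: $\sigma$ ranges over (approximations to) nodes of $S$ and $\hat T(\sigma)$ is a node of $T$. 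You do correctly flag the $\omega$-branching $e$-splitting reservoir as the hard part of requirement \eqref{prop:tree-build:min}; the paper handles it with the outcome bookkeeping in $\module{S}[n]{e}$ (outcomes $\compat$, $\incompat_0$, $(\incompat_1,n)$, $(\diverge,n,m)$), but the reservoir lives inside the priority-tree approximation rather than being ``$\jjump X$-certified to lie in $S$,'' since certifying membership in $S$ is exactly what cannot be done inside a computable construction.
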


We will then leverage this proposition to prove the main theorem below by using it repeatedly to pull down a tree \( T_\alpha \Tleq \zeron{\alpha} \) to a homeomorphic image \( T \Tleq \Tzero \).

\begin{theorem}\label{thm:main}
Given a limit ordinal \( \alpha  < \wck \) and a tree \( T_{\alpha} \subset \wstrs, T_\alpha \Tleq \zeron{\alpha} \) there is a computable tree \( T \) and an f-tree \( \Gamma \Tleq \zeron{\alpha} \) such that \( \Gamma \) is a homeomorphism of \( [T_\alpha] \) with \( [T] \) satisfying the following for all \( f \in [T] \) and ordinals \( \beta < \alpha \) 
\begin{itemize}
    \item \( \jumpn{f}{\beta} \Tequiv f \Tplus \zeron{\beta}  \) and, indeed, \( \jumpn{f}{\alpha} \Tequiv f \Tplus \zeron{\alpha}  \)
    \item \( f \nTleq \zeron{\beta} \)
    \item If \( Y \Tleq \jumpn{f}{\beta} \) then either \( Y \Tleq \zeron{\beta} \) or \( f \Tleq Y \Tplus \zeron{\beta +2} \) 
\end{itemize}
Moreover, this holds with all possibility uniformity.
\end{theorem}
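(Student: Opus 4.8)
\emph{Strategy.} The plan is to strip the \( \alpha \) jumps off \( T_\alpha \) two levels at a time by iterating Proposition~\ref{prop:tree-build}, with one application at \( X = \zeron{\beta} \) for each \( \beta < \alpha \), so that the application at level \( \beta \) is exactly what forces the three bulleted conditions at \( \beta \). Using the uniformity clause of Proposition~\ref{prop:tree-build} together with a transfinite recursion (legitimized by the recursion theorem, since the level‑\( \beta \) objects are defined from the level‑\( (\beta+2) \) datum), I will fix in advance a uniform system assigning to each \( \beta \le \alpha \) a tree \( T_\beta \Tleq \zeron{\beta} \) and a partial f-tree \( \hat{T}_\beta \Tleq \zeron{\beta+2} \), where \( T_\alpha \) is the given tree and, for every \( \beta < \alpha \), the pair \( (T_\beta, \hat{T}_\beta) \) is the output of Proposition~\ref{prop:tree-build} applied with \( X = \zeron{\beta} \) to the level‑\( (\beta+2) \) tree of the system (legitimate because \( \alpha \) is a limit, so \( \beta + 2 \le \alpha \) always, and the required input is \( \Tleq \zeron{\beta+2} = \jjump{\zeron{\beta}} \)). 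By clauses \ref{prop:tree-build:hat-t}--\ref{prop:tree-build:homeo} each \( \hat{T}_\beta \) will be a homeomorphism of \( [T_{\beta+2}] \) onto \( [T_\beta] \) with \( \rng \hat{T}_\beta \subseteq T_\beta \), and by clause \ref{prop:tree-build:len} \( \lh{\hat{T}_\beta(\sigma)} \ge \lh{\sigma} \) whenever defined. Then \( T := T_0 \) is a computable tree (its defining functional takes the empty oracle), and \( \Gamma \) will be the homeomorphism \( [T_\alpha] \to [T] \) assembled by composing the \( \hat{T}_\beta \) down the system: for \( \alpha = \omega \) this is the restriction to paths of \( \hat{T}_0 \circ \hat{T}_2 \circ \hat{T}_4 \circ \cdots \), and for a general limit one composes along the fundamental sequences — exactly where the lemmas of Appendix~\ref{app:ordnottech} are needed.

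\emph{Passing to the limit.} The heart of the argument is to check that \( \Gamma \) really is a well-defined homeomorphism of the \emph{prescribed} \( [T_\alpha] \) onto \( [T] \), and that \( \Gamma \Tleq \zeron{\alpha} \). Convergence of the infinite composition will come from clause \ref{prop:tree-build:len}: since each \( \hat{T}_\beta \) is length‑nondecreasing and respects longest common initial segments, the partial composites evaluated along a path can only lengthen, so their outputs stabilize coordinatewise after finitely many levels, while the inverse maps can only shorten, which is what lets one recover a path of \( T_\alpha \) from a path of \( T \). Because \( \hat{T}_\beta \Tleq \zeron{\beta+2} \) uniformly and the composition is effective, \( \Gamma \Tleq \zeron{\alpha} \). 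I expect this limiting step to be the main obstacle — in particular, organizing the system so that the inverse limit of the \( [T_\beta] \) \( (\beta < \alpha) \) genuinely recovers the prescribed \( [T_\alpha] \) rather than some class unrelated to the input, so that no \( [T_\beta] \) along the way is empty, and so that each membership fact about the computable \( T = T_0 \) is settled from only finitely many levels and a finite part of \( T_\alpha \). The length clause \ref{prop:tree-build:len}, together with careful bookkeeping of which finite portion of the system controls which finite portion of \( T \), is what will make this go through.

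\emph{Verification of the three conditions.} Fixing \( f \in [T] \) and \( \beta < \alpha \), I would let \( f_\beta \in [T_\beta] \) be the image of \( f \) under the segment of \( \Gamma \) from level \( \beta \) down to level \( 0 \), which is computable in \( \zeron{\beta} \); hence \( f \Tplus \zeron{\beta} \Tequiv f_\beta \Tplus \zeron{\beta} \). Clause \ref{prop:tree-build:non-compute} at level \( \beta \) gives \( f_\beta \nTleq \zeron{\beta} \), hence \( f \nTleq \zeron{\beta} \). Clause \ref{prop:tree-build:low2} at level \( \beta \) gives \( \jjump{(f_\beta \Tplus \zeron{\beta})} \Tequiv f_\beta \Tplus \zeron{\beta+2} \); chaining these equivalences up the system (with a little care at the passage between consecutive levels) climbs the jump hierarchy and yields \( \jumpn{f}{\beta} \Tequiv f \Tplus \zeron{\beta} \) for every \( \beta < \alpha \), and taking the effective join over \( \beta \) then gives \( \jumpn{f}{\alpha} \Tequiv f \Tplus \zeron{\alpha} \). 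Finally, if \( Y \Tleq \jumpn{f}{\beta} \Tequiv f_\beta \Tplus \zeron{\beta} \), clause \ref{prop:tree-build:min} at level \( \beta \) yields \( Y \Tleq \zeron{\beta} \) or \( f_\beta \Tleq Y \Tplus \zeron{\beta+2} \), and in the second case \( f \Tleq f_\beta \Tplus \zeron{\beta} \Tleq Y \Tplus \zeron{\beta+2} \), which is the third condition. Taking \( \alpha = \omega \) with \( T_\omega \) a tree having a single non-arithmetic path and then applying Lemma~\ref{lem:function-to-set} yields Corollary~\ref{cor:minimal-singleton}.
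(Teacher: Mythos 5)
Your overall decomposition matches the paper's: build a tower $T_\beta \Tleq \zeron{\beta}$ by transfinitely iterating Proposition~\ref{prop:tree-build} downward from $T_\alpha$, use the recursion theorem to legitimize the forward reference to the level-$(\beta+2)$ datum, compose the $\hT$'s into $\Gamma$, and then read off the three bullets from clauses \ref{prop:tree-build:non-compute}, \ref{prop:tree-build:low2} and \ref{prop:tree-build:min} at level $\beta$. The verification you give for the three conditions, once the tower and its homeomorphisms exist, is the same as the paper's Lemma~\ref{lem:g-beta-facts-main}.

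The gap is exactly where you flag the ``main obstacle,'' and your proposed resolution of it does not work. You cannot set $T_\beta$ to be the output of Proposition~\ref{prop:tree-build} applied to the \emph{whole} tree $T_{\beta+2}$, and clause~\ref{prop:tree-build:len} alone does not make the infinite composition converge. Clause~\ref{prop:tree-build:len} guarantees that the partial composites $\hat{T}_0 \circ \cdots \circ \hat{T}_{2n-2}$ have unbounded length, but two other facts are needed: first, that a path $g \in [T_\alpha]$ determines, for each $\beta$ below a limit $\lambda$, an initial segment of something in $T_\beta$ (so there is a $g_\beta$ to feed into the chain at all); and second, that $\hat{T}_\beta(g_{\beta+2}) \supfun g_\beta$, so that the composites form a $\subfun$-increasing chain rather than drifting to arbitrary values. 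Neither follows from length-nondecrease; $\hT$ could move every coordinate. If each $T_\beta$ is a freshly reorganized tree bearing no syntactic relation to $T_\lambda$, the ``inverse limit'' is not $[T_\lambda]$ and $\Gamma$ is not even defined.

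The paper's fix (Proposition~\ref{prop:main-construction} item~\ref{prop:main:succ-step-subtree} together with Definition~\ref{def:copylen-copyord}, Proposition~\ref{prop:trees-copylen} and Lemma~\ref{lem:tree-uniformization}) is a concrete structural modification, not bookkeeping: each $T_\beta$ is forced to literally agree with $\tilde{T}_{\copyord{\beta}}$ on strings of length $\leq \copylen{\beta}$, the homeomorphism $\Gamma^{\beta+2}_\beta$ is defined to be the \emph{identity} on that initial segment, and Proposition~\ref{prop:tree-build} is applied not to $T_{\beta+2}$ but to each subtree $\TreeMod{T_{\beta+2}}{\sigma}$ for $\lh{\sigma} = \copylen{\beta}$. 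The numbers $\copylen{\beta}$ tend to infinity along the fundamental sequence for every limit $\lambda \Oleq \alpha$, which is what makes $g_\beta = g\restr{\copylen{\beta}}$ well-defined, makes the chain of partial composites nested, and makes each membership fact about $T_0$ depend on finitely many levels. An additional wrinkle you omit is that $T_\lambda$ itself must be replaced by a level-uniform version $T'_\lambda$ (Lemma~\ref{lem:tree-uniformization}) before it can serve as the template being copied, since $T_\lambda\restr{\copylen{\beta}}$ must be decidable from $\zeron{\beta}$, not just from $\zeron{\lambda}$. Without these pieces, the construction of $\Gamma$ and of the tower at limit levels does not go through.
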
     

Note that the final point above immediately entails that if there is some \( \beta < \alpha \) such that  \( Y \Tleq \jumpn{f}{\beta} \) then there is some \( \gamma < \alpha \) such that   either \( Y \Tleq \zeron{\gamma} \) or \( f \Tleq \jumpn{Y}{\gamma}  \).  We now catalogue a number of interesting corollaries, such as the existence of a \( \pizn{2} \) singleton of  minimal arithmetic degree.

\begin{corollary}\label{cor:minimal-singleton}
    There is a \( \pizn{2} \) singleton of minimal arithmetic degree. 
\end{corollary}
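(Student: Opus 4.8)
The plan is to apply Theorem \ref{thm:main} with \( \alpha = \omega \) to a trivial starting tree and then read minimality off the three bulleted conclusions. First I would fix any function \( g \Tleq \zeron{\omega} \) in \( \baire \) --- say the characteristic function of \( \zeron{\omega} \) --- and set \( T_\omega = \set{g\restr{n}}{n \in \omega} \), so that \( T_\omega \Tleq \zeron{\omega} \) is a tree with \( [T_\omega] = \set{g}{} \). Since \( \omega \) is a limit ordinal below \( \wck \), Theorem \ref{thm:main} applies and yields a computable tree \( T \) together with an f-tree \( \Gamma \Tleq \zeron{\omega} \) that is a homeomorphism of \( [T_\omega] \) onto \( [T] \); in particular \( [T] \) is a singleton, say \( [T] = \set{f}{} \), and \( f \) is a \( \pizn{1} \) function singleton.

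Next I would verify that \( f \) is non-arithmetic and of minimal arithmetic degree. Non-arithmeticity is immediate from the second bullet: \( f \nTleq \zeron{n} \) for every \( n < \omega \), so \( f \) lies below no finite jump of \( \eset \) and hence \( f \not\Aleq \eset \). For minimality, suppose \( Y \Aleq f \). By the characterization of \( \Aleq \) recalled in the background, \( Y \Tleq \jumpn{f}{n} \) for some \( n < \omega \), and applying the third bullet with \( \beta = n \) we land in one of two cases. Either \( Y \Tleq \zeron{n} \), so \( Y \) is arithmetic and \( Y \Aequiv \eset \); or \( f \Tleq Y \Tplus \zeron{n+2} \Tleq \jumpn{Y}{n+2} \), so \( f \Aleq Y \) and \( Y \Aequiv f \). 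Thus the only arithmetic degree strictly below that of \( f \) is the degree of \( \eset \), i.e., \( f \) is of minimal arithmetic degree.

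Finally I would pass from \( \baire \) back to \( \cantor \): by Lemma \ref{lem:function-to-set} there is a computable, hence degree-preserving, homeomorphism carrying the \( \pizn{1} \) function class \( [T] \) onto a \( \pizn{2} \) class, whose unique member is a \( \pizn{2} \) singleton Turing (hence arithmetically) equivalent to \( f \), and therefore of minimal arithmetic degree.

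There is no genuine obstacle in this corollary --- it is a bookkeeping consequence of Theorem \ref{thm:main}. The only points requiring a moment's care are that ``arithmetic in \( f \)'' unwinds to a \emph{finite} jump \( \jumpn{f}{n} \), so that \( \beta = n \) is a legitimate ordinal below \( \alpha = \omega \); the trivial absorption \( Y \Tplus \zeron{n+2} \Tleq \jumpn{Y}{n+2} \) used to conclude \( f \Aleq Y \); and the appeal to Lemma \ref{lem:function-to-set} so as to land among \( \pizn{2} \) singletons rather than merely \( \pizn{1} \) function singletons.
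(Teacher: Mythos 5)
Your proof is correct and follows essentially the same route as the paper: apply Theorem \ref{thm:main} to a singleton-path tree \( T_\omega \Tleq \zeron{\omega} \), read minimality directly off the bulleted conclusions, and convert to a \( \pizn{2} \) singleton via Lemma \ref{lem:function-to-set}. The paper simply takes \( T_\omega = \set{\str{0}^n}{n \in \omega} \) rather than your choice of the characteristic function of \( \zeron{\omega} \), and leaves the unwinding of ``\( Y \Aleq f \)'' into the finite-jump case distinction implicit; your write-up spells out what the paper treats as routine.
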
   

\begin{proof}
Taking \( T_\omega = \set{\str{0^n}}{n \in \omega} \)  immediately produces a \( \pizn{1} \) function singleton of minimal arithmetic degree and applying Lemma \ref{lem:function-to-set} transforms this into a \( \pizn{2} \) singleton of minimal arithmetic degree.
\end{proof}

Our work from the previous section isn't wasted as it allows us to prove the following interesting corollary.

\begin{corollary}\label{cor:non-domination-no-avoid-singleton}
There is an arithmetically escaping function \( f \Tleq \zeron{\omega} \) such that every \( X \Aleq f \) is an arithmetic singleton.  
\end{corollary}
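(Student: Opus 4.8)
The plan is to show that the very function constructed in the proof of Corollary~\ref{cor:minimal-singleton} already witnesses this statement. Recall that that proof applies Theorem~\ref{thm:main} with \( \alpha = \omega \) to the singleton tree \( T_\omega = \set{\str{0^n}}{n \in \omega} \), obtaining a computable tree \( T \) and an f-tree \( \Gamma \Tleq \zeron{\omega} \) which is a homeomorphism of \( [T_\omega] \) with \( [T] \). Since \( [T_\omega] \) is a singleton so is \( [T] \); write \( [T] = \set{f}{} \), where \( f = \Gamma(\str{0}^{\omega}) \). Because \( T \) is computable, \( f \) is a \( \pizn{1} \) function singleton, and by the third bullet of Theorem~\ref{thm:main} together with the remark following the theorem, every \( X \Aleq f \) is either arithmetic or arithmetically equivalent to \( f \) — so \( f \) has minimal arithmetic degree. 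The first thing I would check is that \( f \Tleq \zeron{\omega} \): the path \( \str{0}^{\omega} \) is computable and \( \Gamma \Tleq \zeron{\omega} \), so its \( \Gamma \)-image \( f \) is computable from \( \zeron{\omega} \).

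Next I would verify that \( f \) is arithmetically escaping. Since \( f \) has minimal arithmetic degree it is non-arithmetic. If some arithmetic function \( h \) dominated \( f \), then the finite modification \( h' \) of \( h \) that agrees with \( f \) below the point past which \( h \geq f \) would be an arithmetic function majorizing \( f \); but \( f \) is a \( \pizn{1} \) function singleton, so by Lemma~\ref{lem:uniform-modulus} we would get \( f \Tleq h' \), making \( f \) arithmetic — a contradiction. Hence no arithmetic function dominates \( f \).

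The remaining claim is that every \( X \Aleq f \) is an arithmetic singleton. By minimality of the arithmetic degree of \( f \), either \( X \) is arithmetic or \( X \Aequiv f \). If \( X \) is arithmetic it is a \( \sigmazn{n} \) set for some \( n \) and hence an arithmetic singleton, as already observed in the proof of Proposition~\ref{prop:no-arith-singleton-no-minimal}. If instead \( X \Aequiv f \), fix arithmetic formulas with \( \psi(f) = X \) and \( \Psi(X) = f \); then \( X \) is the \emph{unique} solution \( Y \) of the arithmetic formula asserting that \( \Psi(Y) \) is a path through the computable tree \( T \) and that \( \psi(\Psi(Y)) = Y \) — any such \( Y \) has \( \Psi(Y) \in [T] = \set{f}{} \), forcing \( \Psi(Y) = f \) and thus \( Y = \psi(f) = X \). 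So \( X \) is an arithmetic singleton. This is exactly the argument of Proposition~\ref{prop:no-arith-singleton-no-minimal} run in reverse, now exploiting that \( f \) is itself an arithmetic singleton.

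I do not expect a genuine obstacle here, since the construction is already in hand; the only real content is the observation that the \( f \) of Corollary~\ref{cor:minimal-singleton} is \emph{simultaneously} arithmetically escaping (via Lemma~\ref{lem:uniform-modulus}) and has all of its arithmetic predecessors arithmetically singleton (via the formula-composition trick). The mild care required is confined to the two definability/finite-modification bookkeeping steps above. As a bonus, this refutes the hypothesis of Proposition~\ref{prop:no-arith-singleton-no-minimal}, which is precisely what makes the work of the previous section relating non-domination strength and definability worthwhile.
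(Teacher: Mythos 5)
Your proposal is correct and follows essentially the same route as the paper: take the \( \pizn{1} \) function singleton \( f \) of minimal arithmetic degree from Corollary~\ref{cor:minimal-singleton}, invoke Lemma~\ref{lem:uniform-modulus} to conclude \( f \) is arithmetically escaping, and run the formula-composition argument embedded in the proof of Proposition~\ref{prop:no-arith-singleton-no-minimal} to conclude every \( X \Aleq f \) is an arithmetic singleton. You unpack two steps the paper leaves terse (the finite-modification bookkeeping that passes from domination to majorization, and the explicit uniqueness check for the composed arithmetic formula), and you verify \( f \Tleq \zeron{\omega} \) explicitly, but the substance is identical.
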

\begin{proof}
By Corollary \ref{cor:minimal-singleton} and Lemma \ref{lem:function-to-set} let \( f \) be a \( \pizn{1} \) singleton of minimal arithmetic degree. By Lemma \ref{lem:uniform-modulus} \( f \)  must be arithmetically escaping and by Proposition \ref{prop:no-arith-singleton-no-minimal} every \( Y \Aleq f \) is an arithmetic singleton.    
\end{proof}

We can also derive some interesting consequences about perfect sets of minimal arithmetic degree.

\begin{corollary}\label{cor:class-of-min-singletons}
There is a perfect \( \pizn{2} \) class \( \mathscr{C} \)  all of whose members are of minimal arithmetic degree and which contains elements of arbitrarily large non-domination strength.  
\end{corollary}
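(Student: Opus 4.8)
The plan is to apply Theorem~\ref{thm:main} with \(\alpha = \omega\) and \(T_\omega = \wstrs\), so that \([T_\omega] = \baire\), and then transport the resulting \(\pizn{1}\) function class to a \(\pizn{2}\) class via Lemma~\ref{lem:function-to-set}. Since \(\wstrs\) is computable we certainly have \(\wstrs \Tleq \zeron{\omega}\), so the theorem produces a computable tree \(T \subset \wstrs\) and an f-tree \(\Gamma \Tleq \zeron{\omega}\) which is a homeomorphism of \(\baire\) with \([T]\). For \(f \in [T]\) the clause \(f \nTleq \zeron{\beta}\) makes \(f\) non-arithmetic, and --- as noted immediately after Theorem~\ref{thm:main} --- any \(Y\) with \(Y \Tleq \jumpn{f}{n}\) for some \(n < \omega\), i.e.\ any \(Y \Aleq f\), satisfies either \(Y \Tleq \zeron{\gamma}\) (so \(Y\) is arithmetic) or \(f \Tleq \jumpn{Y}{\gamma}\) (so \(Y \Aequiv f\)); hence every \(f \in [T]\) is of minimal arithmetic degree.

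Since \([T]\) is the set of paths through a computable tree in \(\wstrs\) it is a \(\pizn{1}\) function class, so by Lemma~\ref{lem:function-to-set} there is a computable, degree-preserving homeomorphism \(\Gamma_0\) of \([T]\) with a \(\pizn{2}\) class \(\mathscr{C}\). As \([T] \cong \baire\) is perfect and \(\Gamma_0\) is a homeomorphism, \(\mathscr{C}\) is a perfect \(\pizn{2}\) class, and as \(\Gamma_0\) preserves Turing degree every member of \(\mathscr{C}\) is of minimal arithmetic degree.

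The only step needing more than bookkeeping is showing that \(\mathscr{C}\) --- equivalently \([T]\), since \(\Gamma_0\) preserves degrees --- contains members of arbitrarily large non-domination strength, and I would argue this topologically. Fix a countable \(C \subset \baire\) and an \(h\) with \(h(n) > \max_{i \le n} c_i(n)\) for some enumeration \(\{c_i\}\) of \(C\); it suffices to produce \(f \in [T]\) not dominated by \(h\), since then \(f\) escapes \(C\) and \(\Gamma_0(f) \in \mathscr{C}\) computes it. The set \(\{g \in \baire : g \le^{*} h\}\) equals \(\bigcup_{n} \{g : (\forall k \ge n)\, g(k) \le h(k)\}\) and is \(\sigma\)-compact, being a countable union of the compact sets \(\{g : g\restr{n} = \vec{a},\ (\forall k \ge n)\, g(k) \le h(k)\}\). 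If every \(f \in [T]\) were dominated by \(h\), then \([T]\), a closed subset of \(\baire\) contained in this \(\sigma\)-compact set, would itself be \(\sigma\)-compact; but \([T] \cong \baire\), and \(\baire\) is not \(\sigma\)-compact (no nonempty basic open \([\sigma]\) is compact, so every compact subset is nowhere dense, and a \(\sigma\)-compact \(\baire\) would contradict the Baire category theorem). This contradiction yields the desired \(f\), and since \(C\) was arbitrary the proof is complete. The main obstacle, such as it is, lies in this \(\sigma\)-compactness packaging; everything else is immediate from Theorem~\ref{thm:main} and Lemma~\ref{lem:function-to-set}.
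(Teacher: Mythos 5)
Your proof is correct and shares the paper's overall skeleton: take \(T_\omega = \wstrs\), apply Theorem~\ref{thm:main} to get \(T\) and \(\Gamma\), observe that the remarks following Theorem~\ref{thm:main} give minimality of every \(f \in [T]\), and transport \([T]\) to a \(\pizn{2}\) class via Lemma~\ref{lem:function-to-set}. Where you diverge is in establishing that \([T]\) contains paths of arbitrarily large non-domination strength. The paper argues this constructively, noting that since \(\Gamma\) is an \(\omega\)-branching f-tree one can, as in Corollary~\ref{cor:hi-arith-min}, greedily pick a large branch index at each node so that the resulting path through \([T]\) escapes a fixed \(h\) majorizing \(C\). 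You instead use a clean topological observation: the set \(\set{g}{g \le^{*} h}\) is \(\sigma\)-compact, whereas \([T]\), being a closed subset of \(\baire\) homeomorphic to \(\baire\) itself, is not \(\sigma\)-compact (by the Baire category argument), so \([T]\) cannot lie inside that set and hence contains an \(h\)-escaping path. Both arguments are sound. The paper's constructive route gives slightly more control — one actually builds the fast path and sees where the escapes happen — which is in the spirit of the rest of the section; your \(\sigma\)-compactness packaging is shorter and needs only the fact that \(\Gamma\) is a homeomorphism, not the quantitative f-tree structure, so it would transfer more readily to settings where one knows \([T]\cong\baire\) but lacks an explicit branching function.
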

By contain elements of arbitrarily large non-domination strength, we mean that for any countable \( C \subset \baire \) there is an \( X \in \mathscr{C} \) and \( f \in \baire \)  such that \( X \Tequiv f \) and \( f \) isn't dominated by any \( g \in C \).   
\begin{proof}
Take \( T_\omega = \wstrs \).  Since \( \Gamma \) is an f-tree and a homeomorphism at each \( \sigma \in T_\omega \) we can pick \( i \) large which ensures that \( \Gamma(\sigma\concat[i]) \) is large at at \( n = \lh{\Gamma^{\omega}_0(\sigma)} \).    Thus, we can apply the same approach as in  in Corollary \ref{cor:hi-arith-min} to show that there is an element of \( [T] \) that avoids domination by any element in \( C \).  By applying Lemma \ref{lem:function-to-set} we get a  \( \pizn{2} \) class whose members are Turing equivalent to the elements in \( [T] \).   


\end{proof}

 Before we move on to providing proofs of Theorem \ref{thm:main} and Proposition \ref{prop:tree-build}, we end this section by asking a few questions.   While the existence of a \( \pizn{2} \) singleton of minimal arithmetic degree  is suggestive, it isn't quite enough to demonstrate that the project of using the properties of fast growing functions to show  \( \REA[\omega] \) sets can't be of minimal degree fails.   This gives rise to the following question.

\begin{question}\label{q:arith-escaping-avoid-w-rea}
If \( f \) is arithmetically escaping, must there be some  \( X \Aleq f \)  where \( X \) isn't  of \( \REA[\omega] \) arithmetic degree. 
\end{question} 

At first glance, one might think that this question stands or falls with the existence of an \( \REA[\omega] \) set of minimal arithmetic degree.  After all, the question must have a negative answer if there is such an \( \REA[\omega] \) set and if the question has a positive answer then no such set can exist.  However, there is still the possibility that the question has a negative answer and no \( \REA[\omega] \) set is of minimal arithmetic degree.  This would require that there is some non-arithmetic \( \REA[\omega] \) set \( A \) such that every \( B \Aleq A \) is arithmetically equivalent to an \( \REA[\omega] \) set but that doesn't seem beyond the realm of possibility.

Next, inspired by the methods in \cite{Harrington1976MclaughlinS}, we ask if this approach offers any utility if extended up through all ordinals below \( \wck \).

\begin{question}\label{q:beyond-wck}
Do results that create a unique path through \( \kleeneO \) via `non-standard' notations offer a means to extend Theorem \ref{thm:main} to prove interesting results about hyperdegrees?  
\end{question}

Such a generalization isn't as simple as merely applying the construction used in Theorem \ref{thm:main} to some non-standard notation.  That can't work as we could then build a \( \pizn{1} \) function singleton that isn't of hyperarithmetic degree which contradicts the fact that if \( f \) is the unique path through a computable tree \( [T] \) then \( f \in \HYP \).  However, the methods in \cite{Harrington1976MclaughlinS} might allow the definition of a perfect \( \pizn{1} \) class of elements all of which satisfy the conclusion of Theorem \ref{thm:main} with respect to all \( \alpha \) in some linearly ordered path through \( \kleeneO \).  


\begin{question}\label{q:arith-escaping}
Is there a perfect \( \pizn{1} \) function class all of whose elements are both arithmetically minimal and arithmetically escaping?  
\end{question}
The difficulty here is that modifying \( T_\omega \) also results in modification to \( T \).     

\begin{question}\label{q:minimal-pairs}
What kind of ability do we have to control the join of pairs of minimal degrees?  Are there \( \pizn{2} \) singletons \( A, B \) of minimal arithmetic degree such that \( A \Tplus B \Aequiv \zeron{\omega} \)   (or even \( \Tequiv \zeron{\beta} \) for arbitrary \( \beta \in \kleeneO \)).  Could we combine this with the idea in Question \ref{q:beyond-wck} to create a perfect \( \pizn{2} \) class whose elements join to compute \( \kleeneO \)?   
\end{question}

\section{Towers of Trees}

Before we get into the weeds of proving Proposition \ref{prop:tree-build} we first show that it suffices to establish the main theorem.  As not all readers may wish to delve into the details involved in manipulating ordinal notations we segregate the results needed to prove the claim for  \( \alpha \) above \( \omega \) into an appendix.  If you are only interested in the case \( \alpha = \omega \) you may simply take \( \copyord{\beta} = \omega \)  and \( \copylen{n} = n \) for all \( n \in \omega \) and  \( \Ofunc{\omega}(n) =  2n \) and replace the following definition of an even notation with that of an even number.

\begin{definition}\label{def:even-notation}
The ordinal notation \( \beta \) is an even notation if \( \beta = \lambda + 2k \) where \( \lambda \) is either \( 0 \) or a limit notation and  \( k \in \omega \).    
\end{definition}

We break our proof of Theorem \ref{thm:main} into a construction of a uniform sequence of trees \( T_\beta \) and a verification that this uniform sequence of trees has the desired properties. 

\begin{proposition}\label{prop:main-construction}
Given a limit notation \( \alpha \) and a tree \( T_{\alpha} \) computable in \( \zeron{\alpha} \) the following hold

\begin{enumerate}
    \item\label{prop:main:tower} For each even notation \( \beta \Oless \alpha \) there is a tree \( T_\beta \) uniformly computable in \( \zeron{\beta} \).
    \item\label{prop:main:homeo} For all \( \gamma \Oless \beta \Oleq \alpha \), there is a uniformly given f-tree  \( \Gamma^{\beta}_\gamma \Tleq \zeron{\beta} \)  that's a homeomorphism of \( [T_\beta] \) with \( [T_\gamma] \).

    \item\label{prop:main:succ-step-subtree} For each even notation \( \beta \Oless \alpha \) there is some \( \copylen{\beta} \in \omega \) such that if   \( \sigma \in T_\beta, \lh{\sigma} = \copylen{\beta} \)  then applying Proposition \ref{prop:tree-build} with \( S= \TreeMod{T_{\beta+2}}{\sigma} \) and \( X = \zeron{\beta} \) produces    \( T= \TreeMod{T_\beta}{\sigma} \)  and \( \hT \) where \(\sigma \concat \hT(\tau) = \Gamma^{\beta+2}_{\beta}(\sigma\concat\tau) \) for all \( \tau \).  


\end{enumerate}

\end{proposition}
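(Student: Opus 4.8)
The plan is to construct the whole tower $\{T_\beta\}$ together with the f-trees $\{\Gamma^\beta_\gamma\}$ by a single effective recursion whose one ``step'' is Proposition~\ref{prop:tree-build}. Since $\jjump{(\zeron{\beta})}=\zeron{\beta+2}$, that proposition converts a tree computable in $\zeron{\beta+2}$ into a computable tree and, in addition, hands back an $\zeron{\beta+2}$-computable f-tree $\hT$ recording the identification. For each even notation $\beta\Oless\alpha$ I will fix a copy length $\copylen{\beta}\in\omega$ and arrange: (i) $T_\beta$ agrees with $T_{\beta+2}$ --- hence, propagating up the tower, with $T_\alpha$ --- on all strings of length $\le\copylen{\beta}$; and (ii) above each $\sigma\in T_\beta$ with $\lh{\sigma}=\copylen{\beta}$ the subtree $\TreeMod{T_\beta}{\sigma}$ is exactly the computable tree $T$ that Proposition~\ref{prop:tree-build} returns from $S=\TreeMod{T_{\beta+2}}{\sigma}$ and $X=\zeron{\beta}$, with the accompanying $\hT$ serving as $\Gamma^{\beta+2}_\beta$ above $\sigma$ while $\Gamma^{\beta+2}_\beta$ is the identity below $\copylen{\beta}$. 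Then clause~\ref{prop:main:succ-step-subtree} holds verbatim; clause~\ref{prop:main:tower} holds because the Proposition~\ref{prop:tree-build} part is computable and, by the choice of $\copylen{\beta}$, the initial segment copied from $T_\alpha$ is computable in $\zeron{\beta}$; and clause~\ref{prop:main:homeo} follows by composing one-step maps along the notations from $\gamma$ up to $\beta$ and appealing to the homeomorphism property (part~\ref{prop:tree-build:homeo}) of Proposition~\ref{prop:tree-build}. Checking that the $T_\beta$ are genuinely trees and that the grafting at level $\copylen{\beta}$ is consistent is routine --- and there is no circularity in ``$\sigma\in T_\beta$?'' precisely because Proposition~\ref{prop:tree-build} delivers a \emph{total} tree.

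The subtlety is that this step \emph{raises} the notation, so ``$T_\beta$ from $T_{\beta+2}$'' is ill founded ($\beta\Oless\beta+2$) and is not a transfinite recursion on $\kleeneO$. I would resolve this with the Recursion Theorem: obtain an index $e$ such that for every even $\beta\Oleq\alpha$, $\recfnl{e}{\zeron{\beta}}{}$ computes the characteristic function of $T_\beta$ by the recipe above --- consulting (the given functional for) $T_\alpha$ when $\lh{\sigma}<\copylen{\beta}$, and when $\lh{\sigma}\ge\copylen{\beta}$ running the Proposition~\ref{prop:tree-build} construction on $X=\zeron{\beta}$ with the $\jjump{X}$-computation of $S=\TreeMod{T_{\beta+2}}{\sigma\restr{\copylen{\beta}}}$ supplied by $W\mapsto\recfnl{e}{W}{}$ evaluated on the relevant inputs coding $\beta+2$ (legitimate since $W=\jjump{(\zeron{\beta})}=\zeron{\beta+2}$). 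This is exactly where the uniformity clause of Proposition~\ref{prop:tree-build} --- ``from $\Upsilon_2$ effectively produce $\Upsilon,\hat{\Upsilon}$'' --- is used. Totality of $\recfnl{e}{\cdot}{}$, and well-definedness of the induced maps, then comes from the observation that a query about $\sigma\in T_\beta$ with $\lh{\sigma}\ge\copylen{\beta}$ spawns only queries about $\TreeMod{T_{\beta+2}}{\sigma\restr{\copylen{\beta}}}$ at strings \emph{strictly shorter} than $\lh{\sigma}$ --- the length bound~\ref{prop:tree-build:len} forces $\hT$ to lengthen its arguments, so reading a bounded piece of $\rng\hT$ needs only a bounded, shorter piece of $S$; since the $\copylen{\beta}$ grow along the tower, after finitely many steps the query length falls below the current copy length and the computation bottoms out at a fixed finite segment of $T_\alpha$. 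Thus the self-referential calls are well founded in $\lh{\sigma}$ though not in $\beta$.

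The step I expect to be the real obstacle is choosing the copy lengths coherently. On one hand $\copylen{\beta}$ must be small enough that the finite initial segment of $T_\alpha$ frozen into $T_\beta$ below level $\copylen{\beta}$ is already computable from $\zeron{\beta}$, so that clause~\ref{prop:main:tower} genuinely holds and the base case of $\recfnl{e}{\zeron{\beta}}{}$ has the oracle it needs; on the other hand $\copylen{\beta}$ must be large, and monotone enough along the tower, that the agreements cohere through limit notations --- so that for a limit $\lambda\Oleq\alpha$ the trees $T_{\Ofunc{\lambda}(n)}$ converge to $T_\lambda$ and the maps $\Gamma^\lambda_{\Ofunc{\lambda}(n)}$ exist, which is what gives content to clause~\ref{prop:main:homeo} across limits. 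For $\alpha=\omega$ the stated choice $\copylen{n}=n$ with $\Ofunc{\omega}(n)=2n$ does this outright; for larger $\alpha$ the same balance requires bookkeeping on fundamental sequences --- thinning them so that $\Ofunc{\lambda}(n)+2\Oless\Ofunc{\lambda}(n+1)$ and so that a length-$\copylen{\Ofunc{\lambda}(n)}$ chunk of $T_\alpha$ is computable below $\Ofunc{\lambda}(n)$ --- which is exactly what Appendix~\ref{app:ordnottech} supplies, so there I would simply cite those lemmas. In short, no single inequality is the difficulty; the difficulty is the packaging --- turning an inherently ill-founded ``pull-down'' into a legitimate effective construction via the Recursion Theorem while keeping the copy lengths simultaneously compatible with the degree bounds and with limit-stage coherence.
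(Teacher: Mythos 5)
Your overall strategy matches the paper's: define the whole tower by the Recursion Theorem, with one step being Proposition~\ref{prop:tree-build}, and freeze a finite initial segment of length $\copylen{\beta}$ of $T_\beta$ to secure coherence through limits, invoking Appendix~\ref{app:ordnottech} for the ordinal bookkeeping. You also correctly identify why the Recursion Theorem (rather than transfinite recursion) is unavoidable. For the case $\alpha=\omega$ your sketch works verbatim, since there $\copyord{\beta}=\omega=\alpha$ always.

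There is, however, a concrete gap once $\alpha>\omega$: you take the copy source for the initial segment of $T_\beta$ to be (a uniformized) $T_\alpha$, whereas the paper copies from $\tilde{T}'_{\copyord{\beta}}$, the uniformization of the \emph{nearest} limit above $\beta$ in the minimal $\kleeneO$-path from $\alpha$. These are genuinely different, and the difference is what makes clause~\ref{prop:main:tower} true. Take $\alpha=\omega^2$ with $\Ofunc{\omega^2}(m)=\omega\cdot m$ and $\Ofunc{\omega\cdot 2}(n)=\omega+n$. Then $\copyord{\omega+2}=\omega\cdot 2$ and $\copylen{\omega+2}=\copylen{\omega\cdot 2}+2=4$. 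Lemma~\ref{lem:tree-uniformization} applied to $T_{\omega^2}$ only makes $\tilde{T}'_{\omega^2}\restr{\copylen{\gamma}}$ computable in $\zeron{\gamma}$ for $\gamma$ with $\copyord{\gamma}=\omega^2$ (i.e.\ $\gamma=\omega,\omega\cdot 2,\dots$); to get the restriction to length $4$ you would need roughly $\zeron{\omega\cdot 4}$, which vastly exceeds $\zeron{\omega+2}$. So ``consulting $T_\alpha$ when $\lh{\sigma}<\copylen{\beta}$'' breaks the degree bound at every $\beta$ that is not itself in the fundamental sequence of $\alpha$. The fix is exactly what Definition~\ref{def:copylen-copyord} is for: $T_\beta\restr{\copylen{\beta}}$ must copy from $\tilde{T}'_{\copyord{\beta}}$, where $\copyord{\beta}$ may be an \emph{intermediate, constructed} limit tree (here $T_{\omega\cdot 2}$), whose own uniformization \emph{does} give a $\zeron{\beta}$-computable restriction precisely because $\copyord{\beta}$ is chosen to be the closest relevant notation. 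This layering is also what the verification of clause~\ref{prop:main:homeo} at limits leans on (via Proposition~\ref{prop:trees-copylen} and part~\ref{prop:tree-build:len}), so the repair is load-bearing rather than cosmetic. Relatedly, your argument that the fixed point is total ``because the length bound forces queries to shorter strings'' is plausible intuition but not quite the structure the paper uses; once the $\copyord{\beta}$ layering is in place, totality below $\copylen{\beta}$ comes from the uniformization lemma directly, and totality above it from Proposition~\ref{prop:tree-build} returning a total tree.
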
 
For \ref{prop:main:homeo} we understand the uniformity claim to mean that \( \Gamma^{\beta}_\gamma(\tau) \) is given by \( \Gamma(\beta, \gamma, \zeron{\beta}, \tau) \) for a single computable functional \( \Gamma \).  Similarly, for \ref{prop:main:tower} we understand the uniformity claim to mean that there is a single computable functional such that  \( \Upsilon(\beta, \zeron{\beta}, \cdot) \) gives the characteristic function for \( T_\beta \) for all even notations  \( \beta \Oleq \alpha \),  Moreover, that indexes for both functionals are given by a computable function of \( \alpha \) and an index for \( T_\alpha \).

\subsection{Verifying the Main Theorem}

We now prove a utility lemma which will let us show that Theorem \ref{thm:main} follows from the claim above.

\begin{lemma}\label{lem:g-beta-facts-main}
Given \( \alpha,  T_\beta, \Gamma \) as in Proposition \ref{prop:main-construction}  and \( f \in [T_0] \) for all even notations \( \beta \Oleq \alpha \) 
\begin{enumerate}
    \item\label{lem:g-beta-facts-main:g-beta-exists} There is a unique \( g_\beta \in [T_\beta] \) such that \( \Gamma^{\beta}_{0}(g_\beta) = f \).
    \item\label{lem:g-beta-facts-main:g-beta-compute} \( g_\beta \) is uniformly computable from \( f \Tplus \zeron{\beta} \).
    \item\label{lem:g-beta-facts-main:jump-invert} \( \jumpn{f}{\beta} \Tequiv f \Tplus \zeron{\beta} \Tequiv g_\beta \Tplus \zeron{\beta} \).  Moreover, this equivalence holds uniformly in \( \beta \). 
    \item\label{lem:g-beta-facts-main:no-compute} \( \beta \Oless \alpha \) implies \( g_\beta \nTleq \zeron{\beta} \)  
    \item\label{lem:g-beta-facts-main:min-step} If \( \beta \Oless \alpha \) and \( Y \Tleq g_\beta \Tplus \zeron{\beta} \) then either \( Y \Tleq \zeron{\beta} \) or \( g_\beta \Tleq Y \Tplus \zeron{\beta +2} \)        
\end{enumerate}
\end{lemma}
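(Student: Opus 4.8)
The plan is to read each clause off the corresponding clause of Proposition~\ref{prop:tree-build}, as that proposition is invoked in Proposition~\ref{prop:main-construction}; only the jump–inversion equivalence~\ref{lem:g-beta-facts-main:jump-invert} needs a short transfinite induction on even notations. Clause~\ref{lem:g-beta-facts-main:g-beta-exists} is immediate from Proposition~\ref{prop:main-construction}(\ref{prop:main:homeo}) with $\gamma=0$: $\Gamma^{\beta}_{0}$ is a homeomorphism of $[T_\beta]$ with $[T_0]$ and $f\in[T_0]$, so there is a unique $g_\beta=(\Gamma^{\beta}_{0})^{-1}(f)\in[T_\beta]$. For~\ref{lem:g-beta-facts-main:g-beta-compute} I would invert the f-tree $\Gamma^{\beta}_{0}\Tleq\zeron{\beta}$ by search: since $\Gamma^{\beta}_{0}$ is a branching f-tree, distinct strings of a common length have $\incompat$ images, so $g_\beta\restr m$ is the unique $\tau$ of length $m$ with $\Gamma^{\beta}_{0}(\tau)$ defined and $\subfun f$, and this $\tau$ is located from $f\Tplus\zeron{\beta}$ because the image lengths $\lh{\Gamma^{\beta}_{0}(g_\beta\restr m)}$ strictly increase in $m$; uniformity in $\beta$ is inherited from the single functional $\Gamma$ of Proposition~\ref{prop:main-construction}. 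The non-jump half of~\ref{lem:g-beta-facts-main:jump-invert}, namely $f\Tplus\zeron{\beta}\Tequiv g_\beta\Tplus\zeron{\beta}$, then follows: $g_\beta\Tleq f\Tplus\zeron{\beta}$ by the above, and $f=\Gamma^{\beta}_{0}(g_\beta)\Tleq g_\beta\Tplus\zeron{\beta}$ since $\Gamma^{\beta}_{0}\Tleq\zeron{\beta}$.

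For $\jumpn{f}{\beta}\Tequiv f\Tplus\zeron{\beta}$ I would induct on even notations $\beta\Oleq\alpha$. The easy inequality $f\Tplus\zeron{\beta}\Tleq\jumpn{f}{\beta}$ holds for every $\beta$ (as $f,\zeron{\beta}\Tleq\jumpn{f}{\beta}$), and $\beta=0$ is trivial. At a successor $\beta=\gamma+2$, set $\sigma=g_\gamma\restr{\copylen{\gamma}}$; by Proposition~\ref{prop:main-construction}(\ref{prop:main:succ-step-subtree}) the map $\Gamma^{\gamma+2}_{\gamma}$ fixes the first $\copylen{\gamma}$ coordinates, so $g_{\gamma+2}\restr{\copylen{\gamma}}=\sigma$ and $\TreeMod{g_\gamma}{\sigma}=\hT(\TreeMod{g_{\gamma+2}}{\sigma})$ for the $\hT$ output of Proposition~\ref{prop:tree-build} applied with $X=\zeron{\gamma}$. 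Then Proposition~\ref{prop:tree-build}(\ref{prop:tree-build:low2}) gives $\jjump{(\TreeMod{g_\gamma}{\sigma}\Tplus\zeron{\gamma})}\Tequiv\TreeMod{g_\gamma}{\sigma}\Tplus\zeron{\gamma+2}$; absorbing the fixed finite string $\sigma$ (so $\TreeMod{g_\gamma}{\sigma}\Tequiv g_\gamma$) and invoking the inductive hypothesis $\jumpn{f}{\gamma}\Tequiv f\Tplus\zeron{\gamma}\Tequiv g_\gamma\Tplus\zeron{\gamma}$, I obtain $\jumpn{f}{\gamma+2}=\jjump{(\jumpn{f}{\gamma})}\Tequiv\jjump{(g_\gamma\Tplus\zeron{\gamma})}\Tequiv g_\gamma\Tplus\zeron{\gamma+2}\Tleq f\Tplus\zeron{\gamma+2}$. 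At a limit $\lambda$ one has $\jumpn{f}{\lambda}\Tequiv\Tplus_{n}\jumpn{f}{\Ofunc{\lambda}(n)}$, and applying the inductive hypothesis to each $\Ofunc{\lambda}(n)$ collapses this to $\Tplus_{n}(f\Tplus\zeron{\Ofunc{\lambda}(n)})\Tequiv f\Tplus\zeron{\lambda}$ (when $\alpha=\omega$ the sole limit step is $\beta=\omega$, with $\Ofunc{\omega}(n)=2n$); this is exactly where the uniformity asserted in~\ref{lem:g-beta-facts-main:jump-invert} is consumed and re-established, and where the ordinal-notation apparatus of the appendix (arranging the $\Ofunc{\lambda}(n)$ to be even, and the use of $\copyord{\beta}$) enters. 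Threading indices through gives uniformity in $\beta$.

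Clauses~\ref{lem:g-beta-facts-main:no-compute} and~\ref{lem:g-beta-facts-main:min-step} are read off from Proposition~\ref{prop:tree-build} at the application producing $T_\beta$. For $\beta\Oless\alpha$, Proposition~\ref{prop:main-construction}(\ref{prop:main:succ-step-subtree}) presents $\TreeMod{T_\beta}{\sigma}$, with $\sigma=g_\beta\restr{\copylen{\beta}}$, as exactly the tree $T$ produced by Proposition~\ref{prop:tree-build} from $S=\TreeMod{T_{\beta+2}}{\sigma}$ and $X=\zeron{\beta}$, and $\TreeMod{g_\beta}{\sigma}\in[\TreeMod{T_\beta}{\sigma}]$. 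Hence Proposition~\ref{prop:tree-build}(\ref{prop:tree-build:non-compute}) yields $\TreeMod{g_\beta}{\sigma}\nTleq\zeron{\beta}$, so $g_\beta\nTleq\zeron{\beta}$ since $g_\beta\Tequiv\TreeMod{g_\beta}{\sigma}$; and Proposition~\ref{prop:tree-build}(\ref{prop:tree-build:min}), with $\jjump{\zeron{\beta}}=\zeron{\beta+2}$ and $\TreeMod{g_\beta}{\sigma}\Tplus\zeron{\beta}\Tequiv g_\beta\Tplus\zeron{\beta}$, yields that any $Y\Tleq g_\beta\Tplus\zeron{\beta}$ has $Y\Tleq\zeron{\beta}$ or $g_\beta\Tleq Y\Tplus\zeron{\beta+2}$. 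The one genuinely delicate point I expect is the limit step of the jump inversion — keeping the component reductions uniform enough to amalgamate and dovetailing this with the appendix's notation machinery; everything else is routine bookkeeping around the fixed finite shift $\TreeMod{\cdot}{\sigma}$ and the single functional $\Gamma$.
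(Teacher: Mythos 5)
Your proposal is correct and follows essentially the same route as the paper: read clauses (\ref{lem:g-beta-facts-main:g-beta-exists}), (\ref{lem:g-beta-facts-main:g-beta-compute}), (\ref{lem:g-beta-facts-main:no-compute}), (\ref{lem:g-beta-facts-main:min-step}) directly off Proposition~\ref{prop:main-construction} and the matching clauses of Proposition~\ref{prop:tree-build} via the finite shift $\TreeMod{\cdot}{\sigma}$, then establish (\ref{lem:g-beta-facts-main:jump-invert}) by transfinite induction on even notations, using part~\ref{prop:tree-build:low2} of Proposition~\ref{prop:tree-build} at successors and a uniformity/cofinality argument at limits. The only cosmetic difference is that the paper phrases the limit step as ``$f \Tplus \zeron{\beta}$ uniformly computes each $\jumpn{f}{\gamma}$'' rather than as an explicit $\Tplus_n$ amalgamation, and the paper leaves the absorption of the finite shift implicit where you spell it out.
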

\begin{proof}
By point \ref{prop:main:homeo} of Proposition \ref{prop:main-construction}  there is a unique \( g_{\beta} \) whose image under  \( \Gamma^{\beta}_0 \) is \( f \).  We can compute \( g_\beta \) in \( \zeron{\beta}  \Tplus f \) as \( \Gamma^{\beta}_0 \Tleq \zeron{\beta} \) and as it's an f-tree it sends incompatible elements in the domain to incompatible elements in the range.  This verifies points \ref{lem:g-beta-facts-main:g-beta-exists} and \ref{lem:g-beta-facts-main:g-beta-compute}.

For point \ref{lem:g-beta-facts-main:no-compute} let \( \sigma = g_\beta\restr{\copylen{\beta}} \) and note that by \ref{prop:main:succ-step-subtree} of Proposition \ref{prop:main-construction} for \( \beta \Oless \alpha \)  the tree \( \TreeMod{T_\beta}{\sigma} \) is the result of Proposition \ref{prop:tree-build} applied to \( \TreeMod{T_{\beta+2}}{\sigma} \) and  by \ref{prop:tree-build:non-compute} of Proposition \ref{prop:tree-build} we know that \( g_{\beta} \nTleq \zeron{\beta} \).  By part \ref{prop:tree-build:min} of Proposition \ref{prop:tree-build} we have that \ref{lem:g-beta-facts-main:min-step} holds.  This leaves us only \ref{lem:g-beta-facts-main:jump-invert} to verify.

Suppose that \ref{lem:g-beta-facts-main:jump-invert} holds for all notations \( \gamma \Oless \beta \) (where we regard the claim as trivially true for odd notations).  We prove the claim holds for \( \beta \).  First, suppose that \( \beta = \gamma + 2 \).

Now let \( \sigma = g_\gamma\restr{\copylen{\gamma}} \).  By \ref{prop:main:succ-step-subtree} of Proposition \ref{prop:main-construction} the tree \( \TreeMod{T_\gamma}{\sigma} \) is the result of Proposition \ref{prop:tree-build} applied to \( \TreeMod{T_{\gamma+2}}{\sigma} \).  Thus, by point \ref{prop:tree-build:low2} of Proposition \ref{prop:tree-build} we have \( \jjump{\left(g_\gamma \Tplus \zeron{\gamma}\right)} \Tequiv g_\gamma \Tplus \zeron{\gamma +2} \Tequiv g_{\gamma+2} \Tplus \zeron{\gamma+2} \).  The inductive assumption transforms this equivalence into the one asserted in \ref{lem:g-beta-facts-main:jump-invert}. 

Now, suppose that \( \beta \) is a limit notation.  Note that as Proposition \ref{prop:main-construction} guarantees that \( T_\gamma \) is uniformly computable from \( \zeron{\gamma} \) (for \( \gamma \) an even notation) and Proposition \ref{prop:tree-build} holds uniformly therefore claim  \ref{lem:g-beta-facts-main:jump-invert} holds uniformly below \( \beta \).  But \( \zeron{\beta} \) can uniformly compute \( \zeron{\gamma} \)  and thus \( f \Tplus \zeron{\beta} \) can compute \( \jumpn{f}{\gamma} \) for notations \( \gamma \Oless \beta \).  Hence \( f \Tplus \zeron{\beta} \) can compute \( \jumpn{f}{\beta}  \) which is automatically an equivalence.  Using \( \Gamma^{\beta}_0 \Tleq \zeron{\beta} \) to translate between \( g_\beta \) and \( f \)  we see \( f \Tplus \zeron{\beta} \Tequiv g \Tplus \zeron{\beta} \) completing the verification.       
\end{proof}

We can now show that Theorem \ref{thm:main} follows from Proposition \ref{prop:main-construction}.

\begin{proof}
Take \( \alpha' \) to be a notation for the desired ordinal in Theorem \ref{thm:main} and apply  Proposition \ref{prop:main-construction} to get the notation \( \alpha \), the sequence \( T_\beta \) and \( \Gamma \).   We now apply Lemma \ref{lem:g-beta-facts-main}.  The equivalence \(  \jumpn{f}{\beta} \Tequiv f \Tplus \zeron{\beta} \Tequiv g_\beta \Tplus \zeron{\beta} \) from part \ref{lem:g-beta-facts-main:jump-invert} of Lemma \ref{lem:g-beta-facts-main} gives the first claim in Theorem \ref{thm:main} on it's own.  Applying it to part \ref{lem:g-beta-facts-main:min-step}  of Lemma \ref{lem:g-beta-facts-main} is enough to give the final claim in Theorem \ref{thm:main}.  Finally, if \( f \Tleq \zeron{\beta} \) then that equivalence implies that \( g_\beta \Tplus \zeron{\beta} \Tleq \zeron{\beta} \) contradicting part \ref{lem:g-beta-facts-main:no-compute} of Lemma \ref{lem:g-beta-facts-main}.    
\end{proof}

\subsection{Constructing the Tower}

We now prove Proposition \ref{prop:main-construction} follows from Proposition \ref{prop:tree-build}.  To deal with notations for ordinals above \( \omega \) we use some results proved in the appendix. We start by defining \( \copylen{\beta}, \copyord{\beta} \) via Definition \ref{def:copylen-copyord}.  As mentioned above, readers only interested in the case \( \alpha = \omega \) can skip the material on ordinal notations in the appendix.  They will, however, still need to know that we can replace \( T_\omega \) with a tree \( T'_\omega \) with \( [T'_\omega] = [T_\omega] \)  in which the membership of \( \sigma \in  T'_\omega \) for \( \lh{\sigma} = 2n \)  can be computed uniformly in \( \zeron{2n} \) (a special case of the general result proved in Lemma \ref{lem:tree-uniformization}).

We construct a computable functional \( \Upsilon \)  such that for all even notations \( \beta \) with \( \beta \Oleq \alpha \), \( \Upsilon(\beta, \zeron{\beta}, \cdot) \) gives the characteristic function for \( T_\beta \).  Regarding functionals as r.e-sets of axioms we can define a functional \( \Upsilon \) assuming that we already have access to some partial functional \( \hat{\Upsilon} \) and then use the recursion theorem to yield a single functional satisfying \( \Upsilon = \tilde{\Upsilon} \).  Formally speaking, \( \Upsilon \) and \( \tilde{\Upsilon} \) are defined to be sets of axioms however, we will only specify those sets implicitly by  instead defining the trees \( T_\beta \) in terms of the trees \( \tilde{T}_\beta \) where we understand that \( \tilde{T}_\beta \) represents the set defined by \( \tilde{\Upsilon}(\beta, X, \cdot) \) on the guess that \( X = \zeron{\beta} \).     


Specifically, we define \( T_\alpha \) to be \( T_\alpha \).  That is, regardless of the behaviour of \( \tilde{\Upsilon} \), \( \Upsilon(\alpha, X, \cdot) \) gives the computation that yields the characteristic function for \( T_\alpha \) when \( X = \zeron{\alpha}  \).  Given \( \beta \) an even notation with \( \beta \Oless \alpha \) we define the tree \( T_\beta \) as follows (once \( \Upsilon \) has verified the \( \sigmazn{1} \) fact that \( \beta \Oless \alpha \) is an even notation).

Let \( \lambda = \copyord{\beta} \).  If \( \lh{\sigma} \leq \copylen{\beta} \) then \( \sigma \in T_\beta \) iff \( \sigma \in \tilde{T}'_\lambda \) where if \( \lambda \) is a limit notation, \( \tilde{T}'_\lambda \) is the result of applying Lemma \ref{lem:tree-uniformization} to \( \tilde{T}_\lambda \) so that  \( \tilde{T}'_\lambda\restr{\copylen{\beta}} \) has membership uniformly computable in \( \zeron{\beta} \) and \( [\tilde{T}'_\lambda] \) = \( [\tilde{T}_\lambda] \).  If \( \lambda \) is a successor notation than \( \tilde{T}'_\lambda = \tilde{T}_\lambda \).  

For \( \sigma' \) with \( \lh{\sigma'} > \copylen{\beta} \) let \( \sigma' = \sigma \concat \tau \) where \( \lh{\sigma} = \copylen{\beta} \).  Set \( \sigma' \nin T_\beta \) if \( \sigma \nin \tilde{T}'_\lambda \).  If \( \sigma \in \tilde{T}'_\lambda \) then apply Proposition \ref{prop:tree-build}  to \( \tilde{T}^{\sigma}_{\beta+2} \eqdef \TreeMod{\tilde{T}_{\beta +2}}{\sigma} \) to yield some \( T^{\sigma}_\beta \) and place \( \sigma' \in T_\beta \) just if \( \tau \in T^{\sigma}_\beta  \).  Note that, we can enforce the fact that \( \tilde{T}^{\sigma}_{\beta+2} \) is always a tree by only allowing strings into this set when all their predecessors have been seen to be in the set.   

We now define \( \Upsilon \) to be the fixed point rendering \( \tilde{\Upsilon} = \Upsilon \).  Thus, for an even notation \( \beta \Oleq \alpha \) we define  \( T_\beta  = \Upsilon(\beta, \zeron{\beta}) \).  Note that, by part \ref{prop:trees-copylen:between} of  Proposition \ref{prop:trees-copylen} we can be sure that our fixed point will satisfy \( T_\beta\restr{\copylen{\beta}} \subset T_{\beta+2}\restr{\copylen{\beta}}  \) so that \( \tilde{T}^{\sigma}_{\beta+2} \) will be defined for \( \sigma \in T_{\beta}, \lh{\sigma} = \copylen{\beta} \). 

We define the functional \( \Gamma \) in a similar fashion assuming we have some \( \tilde{\Gamma} \) and applying the recursion theorem.  Note that, the argument above produces two indexes \( i \) and \( q(i) \) for the functional \( \Upsilon \) where \( q \) is the computable functional implicitly defined by the construction above and \( i \) is the index given by the recursion theorem applied to \( q \).   While \( i \) and \( q(i) \) result in the same set of axioms, we can't guarantee that the application of Proposition \ref{prop:tree-build}  produces the same \( T, \hT \) for different indexes for the same tree \( S \).  To handle this issue, we use   \( \tilde{T}_{\beta + 2} \) to indicate the definition in terms of \( i \) and apply Proposition \ref{prop:tree-build} to \( \TreeMod{\tilde{T}_{\beta +2}}{\sigma} \) not   \( \TreeMod{T_{\beta +2}}{\sigma} \) to ensure we get the f-tree \( \hT \)  associated with the construction of  \( \TreeMod{T_{\beta}}{\sigma} \).

With this in mind, we define \( \Gamma^{\beta+2}_\beta \) for \( \beta \Oleq \alpha \) an even notation as follows.
If \( \lh{\sigma} \leq \copylen{\beta} \) then \( \Gamma^{\beta+2}_\beta(\sigma) = \sigma \) if \( \sigma \in \tilde{T}_{\beta + 2}  \) and undefined otherwise.  If \( \lh{\sigma'} > \copylen{\beta}  \) then let \( \sigma = \sigma'\restr{\copylen{\beta}} \) and if \( \sigma \nin \tilde{T}_{\beta + 2}  \) then \( \Gamma^{\beta+2}_\beta(\sigma')\diverge \).  Otherwise,  we define \(  \Gamma^{\beta+2}_\beta(\sigma') = \sigma \concat \hT(\tau) \) where , \( \sigma' = \sigma \concat \tau \)  and \( \hT \) is the f-tree produced by Proposition \ref{prop:tree-build} as applied to \( \TreeMod{\tilde{T}_{\beta +2}}{\sigma} \).  For \( \beta' \Oless \beta \) we define \( \Gamma^{\beta+2}_{\beta'} \) to be \( \Gamma^{\beta+2}_{\beta} \circ \tilde{\Gamma}^{\beta}_{\beta'} \).    

Finally, for \( \lambda \) a limit and \( \beta \Oless \lambda \Oleq \alpha \), \( \beta \) an even notation we define \( \Gamma^{\lambda}_{\beta}(\sigma) \) as follows.  Let \( \beta_n \) be the sequence of notations given by part \ref{prop:trees-copylen:lim} of Proposition \ref{prop:trees-copylen}.  Let \( m \) be the least with \( \lh{\sigma} \leq m \) and \( \beta \Oleq \beta_m \) and define \( \Gamma^{\lambda}_{\beta}(\sigma) \) to be equal to \( \tilde{\Gamma}^{\beta_m}_{\beta}(\sigma) \).  Finally, we define \( \Gamma \) via the recursion theorem so that \( \Gamma = \tilde{\Gamma} \).    

To verify this construction produces the desired result, we suppose that \( \beta, \gamma \) is the lexicographically least pair of even notations \( \gamma \Oless \beta \Oleq \alpha \)  such that \( \Gamma^{\beta}_\gamma \) isn't a f-tree that's a homeomorphism of \( [T_\beta] \) with \( [T_\gamma] \) and then observe that if \( \beta = \beta' + 2 \) then since \( \Gamma^{\beta'+2}_{\beta'} \) is a homeomorphism we get a contradiction.  Similarly, if \( \lambda = \beta \) is a limit then by Proposition \ref{prop:trees-copylen}, \ref{prop:tree-build:len} of Proposition \ref{prop:tree-build} and the the inductive assumption regarding  \( \Gamma^{\beta_n}_\beta \)  we can derive that \( \Gamma^{\lambda}_\beta \) is an f-tree that's a homeomorphism of \( [T_\lambda] \) with it's image contained in \( [T_\beta] \).  Now suppose that some \( f \in [T_\beta] \) isn't equal to \( \Gamma^{\lambda}_\beta(g) \) for any \( g \in [T_\lambda] \).  Then, for some \( n \) we have that \( f \supfun \Gamma^{\beta_n}_\beta(\sigma) \) for  no \( \sigma \in T'_\lambda \) with \( \lh{\sigma} = \copylen{\beta_n} \) and thus by the inductive assumption \( f \nin [T_\beta] \).  Thus, the claim holds for \( \Gamma^{\lambda}_\beta \) as well.  This suffices to prove Proposition \ref{prop:main-construction}.

\section{Minimality and Double Jump Inversion}\label{sec:min-and-double-jump-inversion}

In this section, we finally present the proof of Proposition \ref{prop:tree-build}. However, before we do this we answer an obvious question raised by our construction.  Why use double jump inversion and not single jump inversion as Harrington did in \cite{Harrington1976MclaughlinS}.  The answer is that it's not possible.  We can't satisfy the minimality style requirements while also using \( \omega \)-branching to encode the copy of \( T_{\beta + 1} \).   Specifically, we now show that no \( \zeroj \) \( \omega \)-branching tree \( T \) lets us achieve the kind of minimality required by part \ref{prop:tree-build:min} of Proposition \ref{prop:tree-build}.

\begin{proposition}\label{prop:no-do-one-jump}
Given a perfect weakly \( \omega \)-branching pruned f-tree \( T \Tleq \zeroj  \) one can uniformly construct a computable functional  \( \recfnl{}{}{} \) such that \( e \)-splitting pairs in \( T \)  occur above every node in \( T \) and for every \( \tau \in \rng T \)  there   are paths \( f \neq g \) extending \( \tau \)  through \( T \) with \( \recfnl{e}{f}{} = \recfnl{e}{g}{} \).    
\end{proposition}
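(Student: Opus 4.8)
The plan is to build the computable functional $\recfnl{e}{}{}$ so that, on any string or path $\rho$ in $\wstrs$, it outputs a \emph{branch marker} for $T$: scanning $\rho$ from the left, each time $\rho$ crosses a branching node $T(\sigma)$ of $T$ by extending one of its immediate successors we append a $0$ to the output if that successor is the $\leftof$-leftmost one and a $1$ otherwise. Thus the $p$-th coordinate of $\recfnl{e}{\rho}{}$ is meant to record whether, at the $p$-th branching node of $T$ lying below $\rho$, the leftmost branch was taken. The point of this design is that the marker is just fine enough to split --- the leftmost successor is always distinguished from its siblings --- yet just coarse enough to merge --- all the non-leftmost successors of a given node look alike.

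Granting that this can be realised as a genuine computable functional, here is how I would verify the two clauses. Fix $T(\sigma) \in \rng T$ and let $p$ be the number of branching nodes of $T$ strictly below $T(\sigma)$; note that every element of $\dom T$ is branching, since $T$ is pruned and weakly $\omega$-branching. Let $\upsilon_0$ be the leftmost immediate successor of $T(\sigma)$ in $\rng T$ and let $\upsilon_1,\upsilon_2$ be two distinct non-leftmost ones; these exist because $T(\sigma)$ has infinitely many immediate successors, which (being strictly increasing in value at position $\lh{T(\sigma)}$) are pairwise incompatible. For splitting: $\recfnl{e}{\upsilon_0}{p} = 0 \neq 1 = \recfnl{e}{\upsilon_1}{p}$, so $\upsilon_0,\upsilon_1$ is an $e$-splitting pair lying above $T(\sigma)$; since $T(\sigma)$ was arbitrary, $e$-splitting pairs occur above every node of $T$. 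For merging: let $f$ be the path of $[T]$ that extends $\upsilon_1$ and thereafter always takes the leftmost immediate successor, and let $g$ be the corresponding path through $\upsilon_2$; both exist since $T$ is pruned and every node branches, and $f \neq g$ because they already differ at position $\lh{T(\sigma)}$. Along $f$ and $g$ the markers agree coordinate by coordinate: the first $p$ coordinates are determined by $\sigma$ alone (both branches pass through exactly $\sigma$'s ancestors and take the same branch at each), coordinate $p$ is $1$ for both (a non-leftmost branch was taken at $\sigma$), and every later coordinate is $0$ for both. Hence $\recfnl{e}{f}{} = \recfnl{e}{g}{} = \recfnl{e}{T(\sigma)}{}\concat\str{1}\concat\str{0}^{\omega}$, which is exactly what the second clause demands.

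The delicate point --- and where I expect the main obstacle --- is realising the branch marker as a bona fide computable functional given only that $T \Tleq \zeroj$. I would fix, uniformly in a $\zeroj$-index for $T$, a computable approximation $\{T_s\}$ of $T$ by finite f-trees with $\lim_s T_s = T$, and at stage $s$ enumerate, for each $\rho$ of length at most $s$ whose decomposition through $T_s$ has already crossed at least $p+1$ branching nodes, an axiom committing the $p$-th coordinate of $\recfnl{e}{\rho}{}$ to the marker read off from $T_s$. This is a c.e. enumeration, uniform in the index for $T$; the work is to check that it is consistent and that on the true tree --- in particular on the $\upsilon_i$ and the paths $f,g$ above --- the markers come out correctly. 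The danger is that an axiom committed while $T_s$ is still wrong about the relevant branching clashes with the later, correct one, or fires along a genuine path. I would handle this by the usual true-stages bookkeeping --- committing the $p$-th marker for $\rho$ only once the approximation has been seen to be stable on the initial part of $\rho$'s decomposition that determines it, and using the recursion theorem so that $\recfnl{e}{}{}$ can inspect its own axiom list and decline any commitment that would create a conflict --- together with the fact that a path through $T$ meets only genuine nodes of $T$, so that only the eventually-made correct commitments can ever act on $f$ and $g$. Since each step is effective in a $\zeroj$-index for $T$, this gives the claimed uniformity.
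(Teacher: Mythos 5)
Your branch-marker idea is clean conceptually, but the implementation sketch does not survive scrutiny, and the gap is precisely where you flag it. The functional $\recfnl{e}{}{}$ must be \emph{computable}: its axioms, once enumerated, are permanent and must be consistent. Your marker at position $p$ of $\recfnl{e}{\rho}{}$ encodes whether $\rho$ took the leftmost branch at the $p$-th branching node of $T$ below it, but that fact is only $\zeroj$-decidable. Under the stagewise approximation $T_s$ you propose, an axiom for $\recfnl{e}{\rho}{p}$ gets committed at some finite stage based on $T_s$'s current picture of the branching structure, and $T_s$ may still be wrong --- the apparent leftmost successor can change, and worse, a genuine node $\rho \in \rng T$ can receive a wrong marker \emph{before} it ever appears in $\rng T_s$, simply because it shows up earlier as a plain string. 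Your two safeguards do not close this. ``Commit only once the approximation is stable on the relevant part'' is not an effective criterion; you can only wait for apparent stability, which is exactly what fails. And ``use the recursion theorem to decline conflicting commitments'' trades inconsistency for undefinedness: if commitments near a genuine node keep being declined, $\recfnl{e}{f}{}$ can end up finite along a genuine path $f$, which destroys both the splitting and the merging argument (the merging clause needs $\recfnl{e}{f}{} = \recfnl{e}{g}{}$ to be infinite strings read off the whole paths).

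The paper's proof works around exactly this obstacle by \emph{not} trying to encode ``leftmost'' at all. Instead it appends two things to $\recfnl{e}{\tau}{}$ at each step: a \emph{stage number} $q_s(\tau^-)$, defined as the stage at which the most senior immediate extension of $\tau^-$ in $\rng T$ entered, and a separately controlled string $l_s(\tau^-)$. The stage number is computed directly from the approximation up to stage $\godelnum{\tau}$, so it is genuinely computable, and it converges; the point is that it is \emph{allowed} to be wrong on the finitely many early immediate extensions of a node, because the f-tree is weakly $\omega$-branching so there are always infinitely many later immediate extensions that get committed after $q_s$ stabilizes. This asymmetry --- early extensions getting a strictly smaller $q$-value than late ones --- is what yields the $e$-splitting, in place of your leftmost/non-leftmost dichotomy. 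The price is that merging no longer comes for free: once you only trust ``late enough'' extensions, you cannot just decree ``take leftmost forever,'' and the paper spends the bulk of the proof on an injury-style construction of nested approximations $f^n_\sigma, g^n_\sigma$, using the $l_s$ component to force the images of $f_\sigma$ and $g_\sigma$ to agree. So the two approaches differ exactly here: you concentrate all the work in making one marker correct (which cannot be done computably from a $\zeroj$ tree), while the paper uses a marker that is trivially computable but only eventually reliable, and then pays for it with a harder, explicitly constructed pair of agreeing paths.
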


Indeed, the result is actually slightly stronger in that we show that even if \( T \) is unpruned we can start building such \( f, g \) above every node in \( T \) and always extend to preserve agreement under \( \recfnl{e}{}{} \) until we hit a terminal node.  Even without this improvement, this rules out the possibility of building \( \hT \) to be computable in \( \jump{X} \).  While we aren't guaranteed that \( \hT \) itself is pruned it will be pruned whenever \( S \) is pruned.  Thus, the above result rules out the possibility of performing the construction using only single jump inversion.   Since the proof of this proposition takes some work and would interrupt the flow of the paper we relegate it to Appendix \ref{app:min-double-jump}.

 \subsection{Machinery} The construction will build \( T \) via a stagewise approximation \( T_s \) with \( T_{s+1} \supset T_s \) and \( \sigma \in T \) iff  \( \sigma \in T_{\godelnum{\sigma}} \).   We will also maintain a stagewise approximation \( \hT[s] \) to \( \hT \).  We will organize the construction of  by drawing on ideas from  \( \pizn{2} \) constructions in the r.e. sets.  In particularly, we will arrange modules on a priority tree, denoted \( \pTree \subset \wstrs \), but the differences between constructing a set and a tree require we make some modifications.

Usually, in a \( \pizn{2} \) tree construction we assign modules to elements of \( \wstrs \), denoting an arbitrary module assigned to \( \alpha \)  by  \( \module{M}{\alpha} \) (for a module of type \( R \),  \( \module{R}{\alpha} \)) and use only the outcome of that module at stage \( s \) to determine where next to visit at stage \( s \).  However, rather than working on meeting a requirement globally for the entire tree \( T \) we will assign modules to work on meeting a requirement for the path extending \( \hT(\sigma) \) for some \( \sigma \).  Thus, rather than working on \( \pTree \subset \wstrs \) we work on \( \pTree \subset \wstrs \cross \wstrs \).  More specifically, we identify modules in our construction with pairs \( (\alpha, \sigma) \)  where \( \alpha \) is the string built up out of the outcomes of prior modules and \( \sigma \) indicates the element in \( \dom \hT \) we are working to define.   The idea is that at certain points in our construction, rather than following a single outcome of a module, we will simultaneously work both above (our approximation to) \( \hT(\sigma\concat[m]) \) and above/to define  \( \hT(\sigma\concat[m']) \). 

When we specify a module \( \module{M}{\xi} \)  we also specify a set  \( \Msucc(\xi) \) of pairs of potential successors \( (o, \delta) \)    where \( o \) is a potential outcome of the module (identified with elements in \( \omega \) but written more suggestively) and \( \delta \in \wstrs \).  As usual, at each stage any module we visit will have a single outcome \( o \) but as there may be multiple pairs \( (o, \delta) \in \Msucc(\xi) \) there may be multiple immediate successors of this module which both get visited at this stage.   In our construction most modules will only allow  \( \delta = \estr \) but some modules will have successors both of the form \( (o, \estr) \) and \( (o, \str{n}) \) (for some particular value \( n \)).  Thus, we will only every we working on finitely many modules simultaneously at any stage.    With this in mind, we define our priority tree \( \pTree \) inductively as follows.  In what follows, keep in mind that  \( \decode{\pair{a}{b}}{0} = a  \) and  \( \decode{\pair{a}{b}}{1} = b  \).

\begin{definition}\label{def:enchanced-ptree}
\( \pTree \) is the smallest set of pairs \( \pair{\alpha}{ \sigma} \) closed under the following conditions
\begin{itemize}
    \item \( \pair{\estr}{\estr} \in \pTree \)
    \item If \( \xi= \pair{\alpha}{ \sigma}  \in \pTree \land   (o, \delta) \in  \Msucc(\xi) \) then \( \nu = \pair{\alpha\concat[o]}{\sigma\concat \delta} \in \pTree \).  In this case we write \( \nu^{-} = \xi \) and call \( \xi \) the predecessor of \( \nu \).  
\end{itemize}

We define \( \lh{\xi} \in \pTree = \lh{\decode{\xi}{0}} \) and say \( \xi \subfun \xi' \) if \( \subfun  \) holds on both components (i.e.    \( \decode{\xi}{i} \subfun \decode{\xi'}{i}, i \in \set{0,1}{} \)).  Finally, \( \xi^{-} \) is defined to be the unique \( \subfun \)  maximal element in \( \pTree \) with  \( \xi^{-} \subfun \xi \).

\end{definition}

The careful reader might note the possibility that \( \xi^{-} \) could fail to be unique if we aren't careful.  To avoid this, we assume that  the outcomes of \( \xi = \pair{\alpha}{\sigma} \) are modified to be of the form \( \pair{o}{\sigma} \).  This ensures that \( \xi^{-} \) is uniquely defined and \( \subfun \) is always a linear order when restricted to the predecessors of \( \xi \) on \( \pTree \).  As it won't cause any confusion, we will assume this happens in the background and will present our outcomes untransformed.

We now define what it means for a node on this tree to be to the left of another node (we retain the terminology `left of' even though it's not longer visually accurate) and extend this to a set of nodes as follows.

\begin{definition}\label{def:leftof}
We define \( \xi \leftof \xi' \) on \( \pTree \)  just if there are \( \nu \subfun \xi, \nu' \subfun \xi \) with \( \nu^{-} = {\nu'}^{-} \), \( \decode{\nu}{0} = \alpha\concat[o], \decode{\nu'}{0} = \alpha\concat[o'] \) and \( o < o' \).  We extend this relation to sets by setting \( Q \leftof \xi \)  (read left of) for   \( Q \subset \pTree, \xi \in \pTree \) just if \( Q \) contains an element \( \nu \leftof \xi \).     
\end{definition}

Our truepath, and its approximations, will no longer be single paths but sets of nodes.  Informally speaking, we define \( \tpath[s] \) to be the set of nodes visited at stage \( s \) following the rules described above but not visiting any extensions of a node \( \xi \) being visited for the first time at stage \( s \).  Formally speaking, we give the following definition.

\begin{definition}\label{def:truepath}
We define \( \tpath[s] \) as the largest set satisfying the following closure conditions
\begin{itemize}
    \item \( \estr \in \tpath[s] \) 
    \item If \( \xi \in \pTree, \xi \in \tpath[s] \), \( s_\xi > 0 \)  and at stage \( s \), \( o  \) is the outcome of \( \xi \) at stage \( s \)  and \( \nu \in \pTree, \nu^{-} = \xi \)  with \( \decode{\nu}{0} = \decode{\xi}{0}\concat[o] \) then \( \nu \in \pTree \).  
\end{itemize} 
Where \( s_\xi \) is defined to be \( \card{\set{t}{ \xi \in \tpath[t] \land t < s}} \).

We define \( \xi \in \tpath \) iff \( \exists(s)\forall(s' > s)\left(\lnot \tpath[s] \leftof \xi \right) \land \existsinf(s)\left( \xi \in \tpath[s]\right) \)  
\end{definition}

Note that, out priority construction will never reinitialize any nodes.  That is, our construction will satisfy the following condition.

\begin{condition}\label{cond:no-reinit}
If \( s' > s  \) and  \( \xi \in \tpath[s],  \tpath[s'] \leftof \xi \) then for all \( t \geq s' \) \( \xi \nin \tpath[t] \).    
\end{condition}

With the action of the priority tree defined we need to specify how the modules are able to control the construction.  As described above, modules will directly enumerate elements into \( T = \Union_{s \in \omega} T_s \) with a deadline of stage \( s = \godelnum{\sigma} \) to place \( \sigma \) into \( T \).   We use a bit more machinery to specify our approximation to \( \hT \).  Each module \( \module{M}{\xi} \) receives a string \( \Mout{\xi} \) and specifies a string \( \Mout{\nu} \) for each successor to \( \xi \).  If \( \xi = \pair{\alpha}{\sigma} \) then we understand the module \( \xi \) to be executing on the guess that \(  \Mout{\xi} \subfun \hT(\sigma) \).     

This is sufficient for modules that only need to manipulate a single path but some modules will need to manipulate the collection of potential branches of  \( \hT(\sigma) \).  To this end, some modules will also define an infinite set  \( \Mexts{\xi}  \) of branches with the \( n \)-th element (ordered lexicographically) indicated by \( \Mext[n]{\xi} \).  In our construction, we will ensure that our definition of \( \Mout{\xi} \) and \( \Mexts{\xi} \) satisfy the following condition.

\begin{condition}\label{cond:output}
For each \( \xi \in \pTree \) 
\begin{enumerate}
    \item If \( \xi \in \tpath[s] \land s_\xi = 0  \) then \( \module{M}{\xi^{-}} \) must set \( \Mout{\xi} \) during stage \( s \) and ensure \( \Mout{\xi} \in T_{s+1} \) . 
    \item \( \nu \supfuneq \xi \implies \Mout{\xi} \supfuneq \Mout{\nu} \).

\end{enumerate}
If  \( \xi \in \pTree \) and \( \module{M}{\xi^{-}} \) defines \( \Mexts{\xi} \) then
\begin{enumerate}
    \item \( \Mext[n]{\xi} \) enumerates  \( \Mexts{\xi} \) with \( \Mext[n]{\xi}  \supfun \Mout{\xi}\concat[k_n]   \) with \( n \mapsto k_n \) monotonic, injective function of \( n \).
    \item If   \( \xi \in \tpath[s] \land s_\xi = n  \) then \( \module{M}{\xi^{-}} \) must set \( \Mext[n]{\xi} \) by the end of stage \( s \) and ensure \( \Mext[n]{\xi} \in T_{s+1} \).
\end{enumerate}
\end{condition}

These are mostly straightforward demands that what the module at \( \xi \) does is compatible with what \( \xi^{-} \) does and defines it's output promptly.  However, a few points deserve mentioning.  The requirement that \( \Mout{\xi} \in T_{s+1} \) will ensure that \( \rng \hT \subset T \).  The final condition will enable multiple modules who all want to ensure the leftmost branch extending  \( \hT(\sigma) \) has some property to cooperate.  Without this condition, a module that only ensured \( \Mext[0]{\xi} \)   has some property might find their work erased by the next module leaving all extensions of \( \Mext[0]{\xi} \) out of the set of branches it specifies.  

We also impose the following condition on the construction to (help) ensure that if \( \xi \in \tpath \) then the modules above \( \xi \) get to control whether \( \Mout{\xi} \) extends to a path through \( T \).

\begin{condition}\label{cond:mod-seperation}
If \( \Mout{\xi} = \tau \) and the module \( \module{M}{\xi} \) enumerates \( \sigma \) into \( T \) then \( \sigma \supfun \tau \).   
\end{condition}

\subsection{Requirements}

With an understanding of how our \( \pTree \) operates we are now in a position to present the requirements our construction will meet and arrange the modules we will use to meet them on the tree.  Recall that we seek to prove the following result.

\begin{prop:tree-build}

 Given a (potentially partial)  tree \(  S \subset \wstrs \) computable in \( \jjump{X} \)  there is a (total) computable tree \( T \subset \wstrs \) and an \( \jjump{X} \) computable partial f-tree \( \hat{T} \) such that
    \begin{enumerate}
        \item \( \rng \hat{T} \subset T \) and \( [\hat{T}] = [T] \) 
        \item \( \hat{T}(\cdot) \) is a homeomorphism of \( [S] \) with \( [T] \).
        \item If \( f \in [T] \) then \( f \nTleq X \).
        \item If \( g \in [S] \) then \( g \Tplus \jjump{X} \Tequiv \jjump{\left(\hT(g) \Tplus X\right)} \Tequiv \hT(g) \Tplus   \jjump{X}  \).
        \item If \( f \in [T] \) and \( Y \Tleq f \Tplus X  \) then either \( Y \Tleq X \) or \( f \Tleq Y \Tplus \jjump{X} \).

        \item For all \( \sigma \in \bstrs \),  \( \lh{\hT(\sigma)} \geq \lh{\sigma} \) (when defined).   
    \end{enumerate}
    Moreover, this holds with all possible uniformity.  In particular, given a computable functional \( \Upsilon_2 \) we can effectively produce functionals \( \Upsilon, \hat{\Upsilon} \) so that whenever \( \Upsilon_2(\jjump{X}) =S \) then  \( \Upsilon(X)=T  \) and \(  \hat{\Upsilon}(\jjump{X}) = \hat{T} \) with the properties described above.   

\end{prop:tree-build}

We work to meet the following requirements during the construction.  We state the requirements in unrelativized form.  Unlike requirements in the construction of a single set, we work to ensure that the requirement of the form \( \req*{R}{e} \) is satisfied for all \( e \) and all \( \sigma \in \pruneTree{S}  \) with \( \lh{\sigma} = e \).     For the purposes of stating the requirements, we use \( \hT-(\sigma) \) to denote an extension of  \( \hT(\sigma^{-}) \) that would be extended by \( \hT(\sigma) \) if the later were defined.  We won't actually define this function but just use it in the requirements to stand in for some string to be defined later\footnote{We could define \( \hT-(\sigma) \) to be \( \Mout{\xi} \) for the unique \( \xi \) along the truepath of the form \( (\alpha, \sigma) \) such that \( \xi \) implements \( \module{H}{\sigma} \) but, as it's unnecessary for the proof, we feel doing so would unnecessarily multiply notation.}.

\begin{requirements}
\require{P}{e}  \hT(\sigma) \incompat \recfnl{e}{X}{} \lor \recfnl{e}{X}{}\diverge \\[.5em]
\require{L}{e}  \forall({f \in [T], f \supfun \hT(\sigma)})\left(\recfnl{e}{f \Tplus X}{}\diverge\right) \lor \forall({f \in [T], f \supfun \hT(\sigma)})\left(\recfnl{e}{f \Tplus X}{}\conv\right) \\[.5em]
\require{H}{\sigma}  \begin{aligned} 
                            S(\sigma)\conv = 0 &\implies \hT(\sigma)\conv \land \forall(m)\left(\hT(\sigma)\concat[m] \nin T \right) \\ 
                            S(\sigma)\conv = 1 &\implies \hT(\sigma)\conv \in \pruneTree{T} \iff \sigma \in \pruneTree{S}  \\
                            S(\sigma)\diverge &\implies \hT(\sigma)\diverge \land \hT-(\sigma) \nin \pruneTree{T}
                        \end{aligned} \\[.5em]
\require{S}[n]{e}  \begin{gathered}
                        \subtree{T}{\hT(\sigma\concat[n])} \text{ is totally non-\( e \)-splitting } \lor m > n \implies \\
                        \hT-(\sigma\concat[n]) \text{ and } \hT-(\sigma\concat[m]) e-\text{split}
                    \end{gathered}
\end{requirements}

Note that \( \recfnl{e}{f}{}\conv \) means  \( \forall(n)\left(\recfnl{e}{f}{n}\conv\!\right) \) (and similarly for \( \recfnl{e}{f}{}\diverge \)) and that, when we speak of \( e \)-splittings we mean the notion relativized to \( X \).  The statement of \( \req{H}{\sigma} \) is a bit odd in the case where \( \sigma \in S \) since in that we do nothing except don't try to stop \( \hT(\sigma) \) from potentially extending to a full path should \( \sigma \) extend to a full path through \( S \).

Each requirement gets it's own module to assist in meeting it, however, some modules get helper modules.  For instance, we break up meeting the requirement \( \req{H}{\sigma} \) into a module \( \module{H}[+]{\sigma^{-}} \) responsible for creating an \( \omega \)-branching above \( \hT(\sigma^{-}) \) and a module \( \module{H}{\sigma} \) responsible for ensuring \( \hT(\sigma) \) doesn't extend to a path through \( T \)  if  \( \sigma \nin S \).   Similarly, we supplement \( \module{L}{e} \) with submodules \( \module{L}[n]{e} \)  responsible for checking if we can extend \( \recfnl{e}{f}{} \) to converge on \( \recfnl{e}{f}{n} \).      Finally, we use the module \( \module{S}[n]{-1} \) as a helper to split off those modules who will work above \( \hT(\sigma\concat[n]) \) from those modules working to define  \( \hT(\sigma\concat[m]), m > n \).

\begin{definition}\label{def:modules-on-ptree}
Modules are be assigned to nodes on \( \pTree \) as follows 
\begin{enumerate}
    \item If  \( \xi = (\estr, \estr) \) then \( \xi \) implements \( \module{H}[+]{\estr} \).  Also, if \( \xi^{-} \) implements some module \( \module{H}{\sigma} \) and has an outcome guessing \( \sigma \in S \) then \( \xi \) implements  \( \module{H}[+]{\sigma} \).  

    \item If \( \xi^{-} \) implements  \( \module{H}[+]{\sigma} \) with \( \lh{\sigma} = e \) then \( \xi \) implements  \( \module{S}[0]{e} \).

    \item If  \( \xi^{-} \) implements \( \module{S}[n]{e} \) with \( e \geq 0 \) then \( \xi \) implements   \( \module{S}[n]{e - 1} \).

    \item If \( \xi^{-} \) implements \( \module{S}[n]{-1} \) and \(\decode{\xi}{1} = \decode{\xi^{-}}{1} = \sigma  \) then \( \xi \) implements \( \module{S}[n+1]{e} \) where \( \lh{\sigma} = e \).

    \item If \( \xi^{-} \) implements \( \module{S}[n]{-1} \) and \( \decode{\xi}{1} \neq \decode{\xi^{-}}{1}  \)  then \( \xi \) implements \( \module{P}{e} \) for the least \( e \) such that no module of this form is assigned to any \( \nu \subfunneq \xi \).

    \item If \( \xi^{-} \) implements \( \module{P}{i} \) then \( \xi \) implements \( \module{L}{e} \) for the least \( e \) such that no \( \nu \subfunneq \xi \) implements \( \module{L}{e} \). 

    \item If \( \xi^{-} \) implements a module of the form \( \module{L}{e} \) or \( \module{L}[n]{e} \) then let \( i < e \) (if it exists) be the largest value such that \( \xi \) extends the outcome \( \godelnum{\conv} \) of  \( \module{L}{i} \) and \( m \) be the least such that no predecessor of \( \xi \) implements  \( \module{L}[m]{i} \) then \( \xi \) implements  \( \module{L}[m]{i} \). 

    \item If \( \xi^{-} \) implements a module of the form  \( \module{L}{e} \) or \( \module{L}[n]{e} \) and either \( e = 0 \) or for all \( i < e \) \( \xi^{-} \)  doesn't extend the \( \godelnum{\conv}\) outcome of \( \module{L}{i} \) 
    and  \( \sigma = \decode{\xi}{1} \) then \( \xi \)  implements the module \( \module{H}{\sigma} \)

\end{enumerate}     

\end{definition}

Before we get into any further details, we give a high level overview of how this is all supposed to work.  If we suppose that we've just defined   \( \hT(\sigma) \) and wish to define \( \hT(\sigma\concat[m]) \) we start with the module \( \module{H}[+]{\sigma} \) which will specify a bunch of immediate extensions of \( \hT(\sigma) \) (placing them in \( T \)).  We start by executing \( \module{S}[0]{e}, \lh{\sigma} = e \) then \( \module{S}[0]{e -1 } \) and so forth all of which work to ensure the leftmost potential extension of \( \hT(\sigma) \) \( e \)-split or \( e -1 \) split or so on with all the remaining potential extensions.  When we finally get to the module \( \module{S}[0]{-1} \) it specifies that \( \hT(\sigma\concat[0]) \) extends the leftmost branch as extended by all the modules \( \module{S}[0]{e'}, e' \leq e \) and the construction now splits into one part which works on the next module \( \module{P}{e} \) along that path specified as an initial segment of \( \hT(\sigma\concat[0]) \) and another part where \( \module{S}[1]{e} \) starts working to define the node which will be extended by \( \hT(\sigma\concat[1]) \).  After the module of the form \( \module{P}{e} \) we work on the next module of the form \( \module{L}{e} \) and then, if we are above the total outcome any \( \module{L}{e'}, e' \leq e \) we implement the next helper module of the form \( \module{L}[m]{e'} \).  Finally, after those modules, comes the module \( \module{H}{\sigma\concat[m]} \) (assuming we took the path working on \( \hT(\sigma\concat[m]) \))  which guesses whether or not \( \sigma\concat[m] \in S \).  If it determines \( \sigma\concat[m]  \) is in \( S \) then we go on to \( \module{H}[+]{\sigma\concat[m]} \).  If it determines that \( \sigma\concat[m]  \) is not in \( S \) then no module is assigned above that outcome preventing any path from being constructed.

With this in mind, we can now give a formal definition of \( \hT \) and its stagewise approximation.

\begin{definition}\label{def:hT}
We define \( \hT[s](\sigma) = \Mout{\xi} \)  where \( \xi = (\alpha, \sigma) \in \tpath[s] \) and \( \xi^{-} \) implements the module \( \module{H}{\sigma} \).  If no such \( \xi \in \tpath[s] \) then it is undefined.    We define \( \hT(\sigma)  \)) in a similar manner except we require that \( \xi \in \tpath \). 
\end{definition}

\subsection{Modules}

We now describe the operation of the modules.  For this subsection, we describe the operation of the module assuming it is located at the node \( \xi \in \pTree, \xi = (\alpha, \sigma) \) and executing at stage \( s \).  As we only define the outcome of the module at \( \xi \)  when \( s_\xi > 0 \) we understand the previous outcome of the module to be undefined when \( s_\xi \leq 1 \).

\subsubsection{Module \( \module{P}{e} \)}

The module \( \module{P}{e} \) has outcomes \( \godelnum{\neq} \leftof \godelnum{\diverge} \) and \( (o, \delta) \in \Msucc(\xi) \) iff \( \delta = \estr \) and \( o \) is one of the above two outcomes.  

If the previous outcome was \( \godelnum{\neq} \) we retain that outcome.  Otherwise, the module acts as follows.

Check if there is any \( \tau \in T_s, \tau \supfun \Mout{\xi}  \) with \( \recfnl[s]{e}{X}{} \incompat \tau  \).  If found set the outcome to \( \godelnum{\neq} \) and  \( \Mout{\pair{\alpha\concat[\godelnum{\neq}]}{\sigma}}  \) to be a \( \subfun \) maximal extension of \( \tau \) in \( T_s \).  Otherwise, set the outcome to \( \godelnum{\diverge} \) and, if this is the first stage at which that outcome is visited, set \( \Mout{\pair{\alpha\concat[\godelnum{\diverge}]}{\sigma}} = \Mout{\xi} \).

\subsubsection{Module \( \module{L}{e} \)}

The module \( \module{L}{e} \) has outcomes \( \godelnum{\conv} = 0 \) and \( \godelnum{\nu} \) where \( \nu \supfun \xi \) and some module of the form \( \module{L}[n]{e} \) is assigned to \( \nu \) (hence \( \godelnum{\nu} > 0 \)) .  \( \Msucc(\xi) \) consists of all pairs \( (o, \delta) \) where  \( o \) is one of the allowed outcomes and \( \delta = \estr \).   The outcome \( \godelnum{\conv}  \) corresponds to the state where  \( \recfnl{e}{X \Tplus f}{} \) is total for all paths \( f \in [T], f \supfun \Mout{\xi} \) and the outcome  \( \godelnum{\nu} \) where \( \nu \) implements \( \module{L}[n]{e} \) corresponds to the state where all \( f \in [T], f \supfun \Mout{\pair{\alpha\concat[\godelnum{\nu}]}{\sigma}} \) satisfy \( \recfnl{e}{X \Tplus f}{n}\diverge \).

Intuitively, we can think of the operation of \( \module{L}[n]{e} \) as  creating something of a link with  \( \module{L}{e} \) as in a \( \zerojjj \) construction.  When we visit \( \module{L}[n]{e} \) we effectively pause the operation of all the intervening modules between \( \module{L}{e} \) and \( \module{L}[n]{e} \) and start meeting the modules extending  \( \module{L}{e} \) again until we find find an extension which causes the  \( e \)-th functional to converge on argument \( n \) at which point we return to  \( \module{L}{e} \).  Luckily, however, we don't need the full machinery of links and can achieve this effect merely by letting the  module  \( \module{L}[n]{e} \) manipulate the internal state of the unique visiting module \( \module{L}{e} \) as defined here.

If \( s_\xi = 0 \) we initialize \( \upsilon = \diverge, \delta = \diverge \).  If \( s_\xi > 0 \) and \( \upsilon\diverge \) then we visit the outcome \(  \godelnum{\conv} \) with \( \Mout{\pair{\alpha\concat[\godelnum{\conv}]}{\sigma}} = \Mout{\xi} \).  We leave it to the submodules of the form \( \module{L}[n]{e} \) to define \( \upsilon, \eta \) when necessary. 

If \( s_\xi > 0 \) and \( \upsilon\conv \) with \( \module{L}[n]{e} \) assigned to \( \upsilon \) then we check if there is a (maximal) extension \( \tau \supfun \delta, \tau \in T_s \) such that \( \recfnl{e}{\tau}{n}\conv \).  If there is, then set \( \Mout{\upsilon^{+}} = \tau \) where \( \upsilon^{+} \) is the unique successor of \( \upsilon \) on \( \pTree \), set \( \upsilon, \delta \) to be undefined and visit the outcome  \( \godelnum{\conv} \) as above.  Otherwise, visit the outcome  \( \godelnum{\upsilon} \) with \( \Mout{\pair{\alpha\concat[\godelnum{\upsilon}]}{\sigma}} = \delta \).

\subsubsection{Module \( \module{L}[n]{e} \)}

This node only has a single outcome \( 0 \) and \( \Msucc(\xi) = \set{(0, \estr)}{} \).

Let \( \nu \subfun \xi \) be the unique ancestor node implementing \( \module{L}{e} \).  If \( s_\xi = 0 \) then set the variables \( \upsilon, \delta \) for the module at node \( \nu \) to be equal to \( \xi  \) and \( \Mout{\xi}  \) respectively.    If we are ever visited again, we visit our single outcome and rely on the node implementing \( \module{L}{e} \) to have set \( \Mout{\pair{\alpha\concat[0]}{\sigma}} \).

\subsubsection{Module \( \module{H}[+]{\sigma} \)}

This node only has a single outcome \( 0 \) and \( \Msucc(\xi) = \set{(0, \estr)}{} \).   Let \( \nu = (\alpha\concat[0], \sigma) \) and if \( s_\xi = 1 \) then set  \( \Mout{\nu} \) to be a maximal element in \( T_s \) extending \( \Mout{\xi} \).  If \( s_\xi = n + 1 \) (hence \( s_\nu = n \) ) then let \( k \) be large and \( \tau = \Mout{\nu}\concat[k]  \)  (in particular, large enough that if \( \godelnum{\tau} > s \)).   Enumerate \( \tau  \) into \( T_{s+1} \) and set \( \Mext[n]{\nu} = \tau \).   

\subsubsection{Module \( \module{S}[n]{-1} \)}  

This node only has a single outcome \( 0 \) but \( \Msucc(\xi) = \set{(0, \str{n}), (0, \estr)}{} \).  This module doesn't take any actions, merely split up input it gets between the two successor nodes as follows.  Specifically, it sets \( \Mout{\pair{\alpha\concat[0]}{\sigma\concat[n]}} = \Mext[0]{\xi} \),  \( \Mout{\pair{\alpha\concat[0]}{\sigma}} = \Mout{\xi} \) and \( \Mext[n]{\pair{\alpha\concat[0]}{\sigma}} = \Mext[n+1]{\xi} \).

\subsubsection{Module \( \module{S}[n]{e} \)}\label{ssec:module-S-n-e}

This node has outcomes \( \godelnum{\compat} = 0 \), \( \godelnum{\incompat_0} = 1 \), \( \godelnum{(\incompat_1, n)} = 2 + \pair{n}{0} \), \( \godelnum{(\diverge,n,m)} = \pair{n}{m+1}  \).  \( \Msucc(\xi) \) consists of all pairs \( (o, \delta) \) where  \( o \) is one of the allowed outcomes and \( \delta = \estr \).   Remember that in what follows \( e \)-splitting refers to the notion relativized to \( X \).   

The outcome \(  \godelnum{\compat} \) corresponds to the case where \( \Mext[0]{\xi} \) isn't extended by an \( e \)-splitting in \( T \) (as \( \compat \) indicates the strings are compatible).  The other outcomes presume we do find some \( e \)-splitting \( \tau_0, \tau_1 \) extending \( \Mext[0]{\xi} \).     The outcome \( o = \godelnum{\incompat_0} \) corresponds to the case where we find infinitely many elements in \( \Mexts{\xi} \) that extend to an \( e \)-splitting with \( \tau_0 \).  Outcomes of the form \( \godelnum{(\incompat_1, n)}  \) correspond to the case where we only find \( n \)  elements in \( \Mexts{\xi} \) that extend to an \( e \)-splitting with \( \tau_0 \) but infinitely many which extend to an \( e \)-splitting with \( \tau_1 \). Finally, the outcomes of the form \( \godelnum{(\diverge,n,m)}  \) correspond to the case where we find \( n \) elements in \( \Mexts{\xi} \)   extending to \( e \)-splittings with \( \tau_0 \) after which we find another \( m \) elements extending to an \( e \)-splitting with \( \tau_1 \) but infinitely many elements don't extend to an \( e \)-splitting with either.   

Let  \( \nu =  \pair{\alpha\concat[o]}{\sigma} \) for whatever value we specify for the outcome \( o \).  We'll ensure that \( \Mext[0]{\nu} \) \( e \)-splits with \( \Mext[n+1]{\nu}, n \in \omega \) in all cases except when \( o =  \godelnum{\compat} \) or \( o =  \godelnum{(\diverge,n,m)} \).  In the later case, we'll ensure that neither \( \tau_0 \) or \( \tau_1 \)  \( e \)-split with any extension of \( \Mext[n+1]{\nu}, n \in \omega \)  (ensuring that if \( f \supfun \Mext[n+1]{\nu} \) then \( \recfnl{e}{f \Tplus X}{}\diverge \)).  Since we don't want to accidentality extend \( \Mext[0]{\pair{\alpha\concat[o']}{\sigma}} \) to an infinite path if \( o' \) isn't the true outcome we'll ensure that every outcome except \( \godelnum{\compat}  \) corresponds to an incompatible value for \( \Mext[0]{\pair{\alpha\concat[o']}{\sigma}} \) of length at most \( 1 + \max \lh{\tau_0}, \lh{\tau_1} \) while always ensuring that \( \Mext[0]{\nu} \supfun \Mext[0]{\xi} \).

We define \( \Mout{\nu} = \Mout{\xi} \) for all potential outcomes \( o \).  When \( s_\xi = 0 \) we start by setting \( \tau_0, \tau_1 \) to be undefined and \( \hat{n} = \hat{m} = 0  \).  For \( s_\xi > 0 \) we consider the following cases. 

\begin{pfcases*}
    \case[\( \tau_0\diverge \)] Check if there are  \( \tau_0, \tau_1 \in T_s \) with \( \tau_0, \tau_1 \) \( e \)-splitting extensions of \( \Mext[0]{\xi} \).  If no such values are found, then visit outcome  \( o = \godelnum{\compat} \) and define \( \Mexts{\nu} = \Mexts{\xi} \).  If such values are found, let \( \tau_0, \tau_1 \) be \( \subfun \)  maximal extensions in \( T_s \) of these \( e \)-splitting extensions of \( \Mext[0]{\xi} \), let \( \hat{m} = \hat{n} = n \) and visit outcome  \( o = \godelnum{\incompat_0}  \) setting \( \Mext[0]{\nu} = \tau_0 \).   

    \case[\( \tau_0\conv  \) ] We break this up into a number of subcases.  We search for some \( n \leq s_\xi \) and \( \tau' \supfun \Mext[n]{\xi}, \tau' \in T_s  \) that satisfy the following (picking the first case satisfied) 

        \begin{pfcases*}
            \case[\( \tau', \tau_0 \) \( e \)-split with  \( \hat{n} < n \leq s_\xi \)]  In this case, we let \( o = \godelnum{\incompat_0}  \) and set \( \Mext[s_\nu]{\nu} \) to be a \( \subfun \) maximal extension of \( \tau' \) in \( T_s \).  Finally, we set \( \hat{n} = \hat{m} = n \).

            \case[\( \tau', \tau_1 \) \( e \)-split with  \( \hat{m} < n \leq s_\xi \)]  In this case, we let \( o = \godelnum{(\incompat_1, \hat{n})} \) and set \( \hat{m} = n \) and \( \Mext[s_\nu +1]{\nu}  \) be a \( \subfun \) maximal extension of \( \tau' \) in \( T_s \).   

            If this is the first time we've visited this outcome \( s_\nu = 0 \).  We pick \( k \) to be larger than any number mentioned so far in this construction, and set \( \Mext[0]{\nu} =\tau_1\concat[k] \) placing \( \Mext[0]{\nu} \) into \( T_{s+1} \).   

            \case[Otherwise] In this case, we visit outcome \( o = \godelnum{(\diverge,\hat{n},\hat{m})} \).  If this is the first time we've visited this outcome, set \( \Mext[0]{\nu} =\tau_1\concat[k] \).  Let \( \tau' \) be a \( \subfun \) maximal element of \( T_s \) extending \( \Mext[\hat{m} + s_\nu + 1]{\xi} \) and set \( \Mext[s_\nu+1]{\nu} = \tau' \).          

        \end{pfcases*}     

\end{pfcases*}

\subsubsection{Module \( \module{H}{\sigma} \)}

We note that we can assume (see Lemma \ref{lem:limit-lemma}) we have a uniformly given total computable binary valued function (indeed functional) \( \rho(\sigma, s_1, s_0) \) such that \( \rho_1(\sigma, s_1) = \lim_{s_0 \to \infty}  \rho(\sigma, s_1, s_0)  \) is total, \( S(\sigma) = \lim_{s_1 \to \infty} \rho_1(\sigma, s_1) \) (both diverging if either does).   

Morally speaking, this module has the outcomes \( \godelnum{(i, \hat{n})} \) ordered lexicographically  where \( i \in \set{0,1}{} \) indicates whether the module guesses that \( \sigma \nin S \) or \( \sigma \in S \) and \( \hat{n} \) indicates the value at which \( \rho_1 \) achieves it's limit.   However, to ensure that we never reinitialize any node as required by Condition \ref{cond:no-reinit} we also record a value \( m \) giving the number of times an outcome to the left of \( (i, \hat{n}) \) has been visited.  Thus, the actual outcomes will be of the form  \( \godelnum{(i, \hat{n}, m)} = 2\pair{\hat{n}}{m} +i \).   As our pairing function is strictly monotonic in both arguments this functions just as if we'd used outcomes of the form \( \godelnum{(i, \hat{n})} \) and reinitialized outcomes after passing to their left.    \( \Msucc(\xi) \) consists of all pairs \( (o, \delta) \) where  \( o \) is one of the allowed outcomes and \( \delta = \estr \).

If \( s_\xi = 0 \) set \( \tau \) to be a \( \subfun \) maximal extension of \( \Mout{\xi} \) in \( T_s \).  If \( s_\xi > 0 \) we define \( k  \) to be the number of times before stage \( s \) at which an outcome of the form \( \godelnum{(i, \hat{n}, m)} \) has been visited.  Choose the lexicographically least pair \( (i, \hat{n}) \) such that for all \( n \in [\hat{n}, \hat{n} + k] \) there are \( k + 1 \) distinct values \( x^{n}_j \leq s, j < k +1 \) such that \( i = \rho(\sigma, n, x^{n}_j) \).  Note that, such a pair is always found since for large enough \( \hat{n} \) we have \( k = 0 \) and \( \rho(\sigma, \hat{n}, 0) \in \set{0,1}{} \).  We now visit the outcome \( o = \godelnum{(i, \hat{n}, m)} \) where \( m \)  is the number of times that we've visited an outcome of the form \( \godelnum{(i', n, m)} \) with \( (i',n) \) lexicographically before \( (i, \hat{n}) \) before stage \( s \).  

If this is the first time we've visited outcome \( o \) then pick \( c \) to be large,enumerate \( \tau\concat[c] \) into \( T_{s+1} \)  and set \( \Mout{\pair{\alpha\concat[o]}{\sigma}} = \tau\concat[c] \).

\subsection{Verification}

Before we verify the individual requirements, we verify that the construction controls the paths through \( T \) in the manner desired.

\begin{lemma}\label{lem:path-single-modules}
Suppose that \( \xi \in \tpath \) implements a module of the form \( \module{P}{e}, \module{L}{e}, \module{L}[n]{e}, \module{H}{\sigma} \) and that for some \( \nu \) with \( \nu^{-} = \xi \) we have \( \lh{\tau} = \lh{\Mout{\nu}} \) but \( \tau \nsupfun \Mout{\hat{\xi}} \) for \( \hat{\xi} \in \tpath \), \( \hat{\xi}^{-} = \xi \).  Then there are only finitely many stages at which any module at \( \hat{\nu} \supfun \xi \) enumerates an element \( \tau' \supfun \tau \) into \( T \).                  
\end{lemma}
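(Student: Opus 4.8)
The plan is to exploit the fact that each module in the list --- $\module{P}{e}$, $\module{L}{e}$, $\module{L}[n]{e}$, $\module{H}{\sigma}$ --- has all of its $\pTree$-successors of the form $(o,\estr)$: such a module carries a single $\sigma$-component along, has exactly one $\tpath$-successor (the $\hat\xi$ of the statement), and, by inspection of its description, assigns $\Mout{\nu}$ to each of its successors $\nu$ at most once, so $\Mout{\hat\xi}$ becomes permanent as soon as it is defined. Since $\hat\xi\in\tpath$ and the construction never reinitializes nodes (Condition \ref{cond:no-reinit}), I would first fix a stage $s_{0}$ such that $\lnot(\tpath[s]\leftof\hat\xi)$ for every $s\geq s_{0}$; thus at every stage $s\geq s_{0}$ the successor of $\xi$ visited at $s$ (if any) is either $\hat\xi$ or a node strictly to the right of $\hat\xi$. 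As only finitely many stages lie below $s_{0}$, it suffices to bound the stages $s\geq s_{0}$ at which some module at a node $\hat\nu\supfun\xi$ enumerates a $\tau'\supfun\tau$ into $T$.

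The second step routes such an enumeration through an immediate successor of $\xi$. If the module at $\hat\nu$ enumerates $\tau'$ at stage $s$, then $\hat\nu$ is visited at $s$, so $\hat\nu\in\tpath[s]$ and hence $\xi\in\tpath[s]$; let $\nu'$ be the immediate $\pTree$-successor of $\xi$ with $\nu'\subfun\hat\nu$ (taking $\nu'=\hat\nu=\xi$ when $\hat\nu=\xi$). By Condition \ref{cond:mod-seperation} the module at $\hat\nu$ enumerates only extensions of $\Mout{\hat\nu}$, and by the second clause of Condition \ref{cond:output}, $\Mout{\hat\nu}$ extends $\Mout{\nu'}$; hence $\tau'\supfun\Mout{\nu'}$, so $\tau$ is compatible with $\Mout{\nu'}$, and $\nu'$ is not strictly left of $\hat\xi$.

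The heart of the argument is the resulting dichotomy on $\nu'$, where the hypotheses $\lh\tau=\lh{\Mout\nu}$ and $\tau\nsupfun\Mout{\hat\xi}$ come in. If $\nu'=\hat\xi$, then $\tau'$ extends both $\tau$ and $\Mout{\hat\xi}$, and I would check module by module that this forces $\tau\supfun\Mout{\hat\xi}$ --- contradiction. For $\module{H}{\sigma}$ this is immediate: every successor output has the single length $\lh{\tau_{0}}+1$, where $\tau_{0}$ is the maximal extension of $\Mout{\xi}$ fixed at stage $s_{\xi}=0$, so $\tau$ and $\Mout{\hat\xi}$ are initial segments of $\tau'$ of equal length and therefore coincide. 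For $\module{L}[n]{e}$ it is trivial (a single successor), and for $\module{P}{e}$ and $\module{L}{e}$ it follows from the fact that every output assigned to a visited successor extends whichever of $\Mout{\xi},\Mout{\hat\xi}$ is the shorter. Hence $\nu'\neq\hat\xi$, so $\nu'$ is strictly to the right of $\hat\xi$; since $\hat\xi\in\tpath[s']$ for infinitely many $s'>s$ and $\hat\xi\leftof\nu'$, Condition \ref{cond:no-reinit} shows $\nu'$ and all its descendants are visited at no stage past the next visit to $\hat\xi$, so each fixed such $\nu'$ contributes only finitely many stages. Finally $\lh\tau=\lh{\Mout\nu}$ limits the eligible successors: for $\module{H}{\sigma}$ all successor outputs have length $\lh\tau$ and pairwise distinct last coordinate, so at most one is compatible with $\tau$; for the other three module types there are only finitely many successors at all. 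Summing finitely many stages over finitely many eligible $\nu'$, and adding the stages below $s_{0}$, completes the proof.

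The main obstacle is exactly this module-by-module verification: one has to read off from the (rather intricate) descriptions of $\module{P}{e}$, $\module{L}{e}$, $\module{L}[n]{e}$, $\module{H}{\sigma}$ the precise lengths and pairwise compatibilities of the strings $\Mout{\nu}$ assigned to their $\pTree$-successors, and confirm that, for a string $\tau$ whose length matches a successor output but which does not extend $\Mout{\hat\xi}$, the set of successors $\nu$ with $\Mout{\nu}\compat\tau$ is finite and excludes $\hat\xi$. For $\module{H}{\sigma}$ this reduces to the ``freshness'' of the last coordinate chosen when each outcome is first visited; for $\module{L}{e}$ one additionally has to track how the link submodules $\module{L}[n]{e}$ supply the outputs of its non-$\godelnum{\conv}$ outcomes. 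Everything else is a soft consequence of Conditions \ref{cond:no-reinit}, \ref{cond:output} and \ref{cond:mod-seperation} together with the definition of $\tpath$.
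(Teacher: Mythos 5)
Your overall structure mirrors the paper's: reduce via Conditions \ref{cond:mod-seperation} and \ref{cond:output} to bounding visits to immediate successors of \( \xi \) whose \( \Mout \) is compatible with \( \tau \), then work module by module. Fixing \( s_0 \) first is a harmless reorganization. However, there is a concrete gap in the final counting step. You assert that, apart from \( \module{H}{\sigma} \), ``for the other three module types there are only finitely many successors at all.'' That is false for \( \module{L}{e} \): its outcome set is \( \godelnum{\conv} \) together with one outcome \( \godelnum{\nu} \) for \emph{every} node \( \nu \supfun \xi \) implementing some \( \module{L}[n]{e} \), and there are infinitely many such \( \nu \). So when the true outcome of \( \xi \) is the infinitary \( \godelnum{\conv} \), infinitely many successors strictly to the right of \( \hat\xi \) get visited, each finitely often, and ``summing finitely many stages over finitely many eligible \( \nu' \)'' does not go through. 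Your earlier remark about tracking the link submodules hints at the right idea but does not close this hole.

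The paper's treatment of \( \module{L}{e} \) does not rely on finiteness of the successor set. It instead observes that when \( \hat\xi \) is the \( \godelnum{\conv} \) successor, \( \Mout{\hat\xi} = \Mout{\xi} \), while every other successor output is set to \( \delta = \Mout{\xi'} \) for some \( \xi' \supfun \hat\xi \) implementing \( \module{L}[n]{e} \); hence \( \Mout{\nu'} \supfun \Mout{\hat\xi} \) for \emph{every} immediate successor \( \nu' \). Since \( \tau \incompat \Mout{\hat\xi} \) (the length hypothesis pins \( \lh\tau \geq \lh{\Mout{\hat\xi}} \) here), no successor at all is ``eligible,'' and the compatible-visit count is zero. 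When the true outcome is instead one of the finite outcomes \( \godelnum{\nu} \), the module's internal state \( (\upsilon,\delta) \) stabilizes and only finitely many distinct successor nodes are ever visited, which is what ``evident'' covers. To repair your proof you need to replace the ``finitely many successors'' claim for \( \module{L}{e} \) with this extension-chain argument. (A smaller remark: your contradiction step for \( \nu' = \hat\xi \) silently needs \( \lh\tau \geq \lh{\Mout{\hat\xi}} \), which does not follow from the literal hypothesis when \( \nu \neq \hat\xi \) and, say, \( \module{P}{e} \) has taken its \( \godelnum{\neq} \) outcome; but the paper's proof makes the same implicit assumption, so I read this as a statement-level imprecision rather than a defect particular to your argument.)
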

Note that this covers the case where \( \module{H}{\sigma} \) doesn't have any extension \( \nu \) in the truepath because \( S \) doesn't converge on \( \sigma \).   
\begin{proof}
By Condition \ref{cond:mod-seperation} and Condition \ref{cond:output} (and the fact that no single module ever adds a full path) it is enough to show that there are only finitely many stages at which we visit a node \( \hat{\nu} \) with \( \hat{\nu} \supfunneq \xi \) and  \( \Mout{\hat{\nu}} \) compatible with \( \tau \).     

For the module \( \module{L}[n]{e} \) this is trivial as this module only has a single outcome.   For the module \( \module{H}{\sigma}  \) we note that each time visit to an outcome \( \godelnum{i, \hat{n}, m} \) ensures that all outcomes to the right visit strings that have never been visited before.  As \( \module{P}{e} \) can act at most once this case is also straightforward.  

This leaves only the case \( \module{L}{e} \).  If this module takes any of the finite outcomes the claim is evident and if this module takes the outcome \( \godelnum{\conv} \) then the claim follows because \( \Mout{\nu} \supfun \Mout{\hat{\xi}} \) for all \( \nu \) with \( \nu^{-} = \xi \) when \( \hat{\xi} \) corresponds to the infinitary outcome.      
\end{proof}

\begin{lemma}\label{lem:path-mext-modules}
Suppose that \( \xi \in \tpath \) and \( \xi^{-} \)  implements a module of the form  \( \module{H}[+]{\sigma} \) or \( \module{S}[n]{e}, e \geq 0 \) then for each \( k \) there is some \( l \)  such that if  \( \tau \supfun \Mout{\xi}\concat[k], \lh{\tau} \geq l \) but \( \tau \nsupfun \Mext[n]{\xi} \) for any \( n \) then there are only finitely many stages at which some \( \nu \supfun \xi^{-} \) enumerates an extension of \( \tau \).
\end{lemma}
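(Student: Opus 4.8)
The plan is to reduce the statement, just as in the proof of Lemma \ref{lem:path-single-modules}, to a single ``freezing'' claim and then verify that claim by tracking, for each string enumerated into \( T \), its value at the coordinate immediately following \( \Mout{\xi} \). First I would record the bookkeeping. By Condition \ref{cond:mod-seperation} the module at a node \( \mu \) enumerates only extensions of \( \Mout{\mu} \), and inspecting the module descriptions shows the only strings any module ever adds to \( T \) are the values \( \Mout{\mu'} \) and \( \Mext[m]{\mu'} \) it sets for its successors \( \mu' \): \( \module{P}{e} \), \( \module{L}{e} \), \( \module{L}[n]{e} \) and \( \module{S}[n]{-1} \) add nothing new; \( \module{H}{\sigma} \) adds only successor output values; \( \module{H}[+]{\sigma} \) adds only the branch values \( \Mext[n]{\mu'} = \Mout{\mu'}\concat[k_n] \); and \( \module{S}[n]{e} \) adds only branch values (one of which may be a string \( \tau_1\concat[k] \) it assigns as the leftmost branch of a successor). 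Next, since the construction never reinitializes (Condition \ref{cond:no-reinit}) and \( \xi \in \tpath \), every successor of \( \xi^{-} \) other than \( \xi \) lies left of or right of \( \xi \), hence it and its entire subtree are visited at only finitely many stages; so there is a stage past which the only nodes \( \mu \supfuneq \xi^{-} \) that act are \( \xi^{-} \) itself and nodes \( \mu \supfuneq \xi \). Finally, by Condition \ref{cond:output}, at any node the branch values are pairwise distinct at the coordinate immediately following that node's output.

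The crux is the claim that there is a stage \( s^{*} \) such that for every \( s > s^{*} \), every \( \mu \supfuneq \xi^{-} \) and every string \( \rho \supfuneq \Mout{\xi} \) that \( \module{M}{\mu} \) enumerates at stage \( s \), we have \( \rho \supfuneq \Mext[m]{\xi} \) for some \( m \). Granting this, fix \( k \): if no \( \Mext[m]{\xi} \) agrees with \( \Mout{\xi}\concat[k] \) at the coordinate following \( \Mout{\xi} \), let \( l = \lh{\Mout{\xi}} + 1 \); otherwise there is a unique such branch \( \Mext[m_0]{\xi} \), and we let \( l = \lh{\Mext[m_0]{\xi}} + 1 \). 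If now \( \tau \supfuneq \Mout{\xi}\concat[k] \), \( \lh{\tau} \ge l \), and \( \tau \nsupfun \Mext[m]{\xi} \) for all \( m \), then no extension of \( \tau \) is enumerated after \( s^{*} \): such an extension \( \rho \) would, by the claim, extend some \( \Mext[m]{\xi} \); as \( \rho \supfuneq \tau \supfuneq \Mout{\xi}\concat[k] \), that branch agrees with \( \Mout{\xi}\concat[k] \) at the relevant coordinate and so equals \( \Mext[m_0]{\xi} \); then \( \tau \) and \( \Mext[m_0]{\xi} \) are both initial segments of \( \rho \), hence comparable, and \( \lh{\tau} > \lh{\Mext[m_0]{\xi}} \) forces \( \tau \supfuneq \Mext[m_0]{\xi} \), contradicting the hypothesis. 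Hence only the finitely many stages \( \le s^{*} \) can see an extension of \( \tau \) enumerated, as required.

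To prove the claim, take \( s^{*} \) past the finitely many stages at which any successor of \( \xi^{-} \) other than \( \xi \) is visited. For \( s > s^{*} \), only \( \xi^{-} \) and nodes \( \mu \supfuneq \xi \) act; the strings \( \xi^{-} \) enumerates that extend \( \Mout{\xi} \) are all branch values \( \Mext[j]{\xi} \) of its true successor \( \xi \), which trivially extend themselves. For \( \mu \supfuneq \xi \) I would run a downward induction along the \( \pTree \)-subtree above \( \xi \) with the invariant: \( \Mout{\mu} \) is either \( \Mout{\xi} \) or extends some \( \Mext[m]{\xi} \), and every branch value that \( \mu \) receives or sets extends some \( \Mext[m]{\xi} \). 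Together with Condition \ref{cond:mod-seperation} and the description of what each module adds, this invariant gives the claim. Propagating it through an \( \module{S}[n]{e'} \) step uses that the \( e' \)-splitting pair \( \tau_0, \tau_1 \) is picked extending the module's received leftmost branch, so re-rooting the leftmost branch to \( \tau_0 \) or to \( \tau_1\concat[k] \) keeps it above that branch, while the new non-leftmost branches are \( \subfun \)-maximal extensions of received branches; through an \( \module{S}[n]{-1} \) step it uses only that the module shifts branch indices down by one and hands its leftmost branch to one successor as that successor's output; and through the succeeding \( \module{H}[+]{\sigma'} \), \( \module{P}{e} \), \( \module{L}{e} \), \( \module{H}{\sigma'} \) blocks it is immediate, since each keeps its \( \Mout{} \) an extension of the \( \Mout{} \) it receives and adds only extensions of it.

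I expect the downward induction --- in particular the \( \module{S}[n]{e'} \) and \( \module{S}[n]{-1} \) steps --- to be the main obstacle: no single clause is hard, but it requires carefully unwinding every case of those modules to be sure that no newly enumerated string ever branches off a portal strictly between \( \Mout{\xi}\concat[k] \) and \( \Mext[m]{\xi} \), and that the leftmost branch, though repeatedly re-rooted, only ever moves to extensions of its previous value.
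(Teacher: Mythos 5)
Your proof rests on a false intermediate step, and the gap is precisely the one the paper's length bound \( l = 1 + \max\{\lh{\tau_0}, \lh{\tau_1}\} \) is engineered to close.

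The trouble is the move ``every successor of \( \xi^{-} \) other than \( \xi \) \ldots\ is visited at only finitely many stages; so there is a stage past which the only nodes \( \mu \supfuneq \xi^{-} \) that act are \( \xi^{-} \) itself and nodes \( \mu \supfuneq \xi \).'' For \( \xi^{-} \) implementing \( \module{S}[n]{e} \) there are infinitely many successor nodes (one per outcome), and even though Condition~\ref{cond:no-reinit} guarantees each individual right-outcome node is visited only finitely often, it does not give you a uniform bound. When the true outcome is \( \godelnum{\incompat_0} \), for instance, the construction can pass to the right infinitely often, through an unbounded sequence of fresh nodes \( (\incompat_1,\hat n) \) with \( \hat n \to \infty \). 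Each of those visits sets and enumerates a brand-new string \( \tau_1\concat[k'] \) extending \( \Mout{\xi} \) (indeed extending \( \Mout{\xi}\concat[k_0] \), where \( k_0 \) is the first coordinate of \( \Mext[0]{\xi} \)), and none of these strings extends any \( \Mext[m]{\xi} \): they extend \( \tau_1 \), which \( e \)-splits with and hence is incompatible with \( \tau_0 = \Mext[0]{\xi} \), while the remaining \( \Mext[m]{\xi}, m>0 \) branch off at a different first coordinate. So your ``claim'' --- that past some \( s^* \) everything enumerated above \( \Mout{\xi} \) extends some \( \Mext[m]{\xi} \) --- fails outright, and the downward induction you sketch cannot recover it because it ignores action above the wrong outcomes entirely.

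This matters for your choice of \( l \) as well. You take \( l = \lh{\Mext[m_0]{\xi}} + 1 = \lh{\tau_0}+1 \). If \( \lh{\tau_1} \ge \lh{\tau_0}+1 \), then \( \tau := \tau_1 \) (or any initial segment of \( \tau_1 \) of length \( \ge l \)) extends \( \Mout{\xi}\concat[k_0] \), has length \( \ge l \), and extends no \( \Mext[m]{\xi} \); yet the construction keeps enumerating extensions \( \tau_1\concat[k'] \supfun \tau \) forever. What rescues the statement is that the fresh values \( k' \) grow, so for any \( \tau \) long enough to commit to a value at coordinate \( \lh{\tau_1} \) only finitely many of these can be compatible with \( \tau \). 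That is exactly why the paper takes \( l = 1 + \max\{\lh{\tau_0},\lh{\tau_1}\} \), citing the explicit remark in the description of \( \module{S}[n]{e} \) that the leftmost branches attached to distinct outcomes are pairwise incompatible by that length. Your argument never invokes that observation, and without it the \( \module{S}[n]{e} \) case does not go through.
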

\begin{proof}
This is trivial if \( \xi^{-} \) implements a module of the form \( \module{H}[+]{\sigma} \).  Also, if there is no \( \Mext[n]{\xi} \supfun \Mout{\xi}\concat[k] \) then there is some last stage at which we visit any \( \nu \) with \( \nu^{-} = \xi^{-} \) with \( \Mexts{\nu} \) containing an extension of \( \Mout{\xi}\concat[k]  \).  So suppose that \( \Mext[n]{\xi} \supfun \Mout{\xi}\concat[k]  \).  If \( n > 0 \) then, once we set \( \Mext[n]{\xi} \) we never visit any \( \nu \neq \xi \) with \( \nu^{-} = \xi^{-} \) with \( \Mext[m]{\xi} \supfun \Mout{\xi}\concat[k]  \).  

This leaves only the case where \( \Mext[0]{\xi} \supfun \Mout{\xi}\concat[k] \).  If \( \xi \) corresponds to an outcome \( \godelnum{(\diverge,n,m)} \) or \( \godelnum{\compat} \)  then we never again visit another extension of \( \xi^{-} \) after visiting \( \xi \) so the bound can be deduced by looking at the finitely many stages before that happens.  Thus, we can assume \( \xi \) corresponds either to   \( \godelnum{\incompat_0}  \) or  \( \godelnum{(\incompat_1, n)} \).  In this case, we can take \( l = 1 + \max \lh{\tau_0}, \lh{\tau_1} \) by the remark in Section \ref{ssec:module-S-n-e}.

\end{proof}



\begin{lemma}\label{lem:ht-yields-paths}
For all \( f \in \baire, \) \( f \in [T] \) iff \( \exists(g \in \baire)\left(\hT(g) = f \right) \). 
\end{lemma}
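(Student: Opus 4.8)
The plan is to prove the two implications of the biconditional separately; the backward one is routine, and the forward one is an induction along $\pTree$ fed by Lemmas \ref{lem:path-single-modules} and \ref{lem:path-mext-modules}. For the backward implication, suppose $\hT(g) = f$. Since $\hT$ is an f-tree the strings $\hT(g\restr{n})$ ($n \in \omega$) are pairwise $\subfun$-comparable, and by Condition \ref{cond:output} together with Definition \ref{def:hT} each of them lies in $T$. Inspecting the modules, when $\module{H}{g\restr{n+1}}$ first acts it appends a fresh symbol to an extension of $\hT(g\restr{n})$, so $\lh{\hT(g\restr{n+1})} > \lh{\hT(g\restr{n})}$ and these lengths tend to infinity; as $T$ is $\subfun$-closed, $f = \bigcup_n \hT(g\restr{n}) \in [T]$.

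For the forward implication, fix $f \in [T]$ and build a $\subfun$-increasing sequence $\xi_0 \subfun \xi_1 \subfun \cdots$ of nodes of $\pTree$, all on $\tpath$, together with nested finite strings approximating some $g \in \baire$, maintaining the invariant: $\Mext[k]{\xi_i} \subfun f$ for some $k$ when the family $\Mexts{\xi_i}$ is defined, and $\Mout{\xi_i} \subfun f$ otherwise (the first clause implies $\Mout{\xi_i} \subfun f$ too, since the members of $\Mexts{\xi_i}$ extend $\Mout{\xi_i}$); and $\sigma$ is recorded as an initial segment of $g$ precisely when $\xi_i$ implements $\module{H}[+]{\sigma}$, in which case $\Mout{\xi_i} = \hT(\sigma)$. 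We begin with $\xi_0 = (\estr, \estr)$, and at each step pass to the child of $\xi_i$ whose admissible output or family member $f$ extends.

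The step divides into two cases. If $\xi_i$ implements a module of the form $\module{P}{e}$, $\module{L}{e}$, $\module{L}[n]{e}$ or $\module{H}{\sigma}$, let $\hat\xi$ be its $\tpath$-successor and put $\tau = f\restr{\lh{\Mout{\hat\xi}}}$; were $\tau \nsupfun \Mout{\hat\xi}$, Lemma \ref{lem:path-single-modules} would leave only finitely many extensions of $\tau$ in $T$, contradicting $\tau \subfun f \in [T]$, so $\Mout{\hat\xi} \subfun f$ and we set $\xi_{i+1} = \hat\xi$. In particular a node implementing $\module{H}{\sigma}$ has a $\tpath$-successor whose output $f$ extends: if $S(\sigma)\diverge$ there is no $\tpath$-successor and the remark after Lemma \ref{lem:path-single-modules} again annihilates all extensions of a suitable $\tau \subfun f$; if $S(\sigma)\conv = 0$ the $\tpath$-successor carries no module, so by the module assignment, Condition \ref{cond:mod-seperation}, and the freshness of the symbol appended by $\module{H}{\sigma}$, its output is a leaf of $T$ that $f$ cannot extend; both are impossible, so $\sigma \in S$, $\hat\xi$ implements $\module{H}[+]{\sigma}$, and a digit of $g$ has just been settled. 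In the other case $\Mexts{\xi_i}$ is defined and $\Mext[k]{\xi_i} \subfun f$; Lemma \ref{lem:path-mext-modules} applied to the $\tpath$-successor $\hat\xi$ of $\xi_i$ forces $f$ to extend some $\Mext[n]{\hat\xi}$, and we set $\xi_{i+1} = \hat\xi$; when $\xi_i$ implements $\module{S}[n]{-1}$ the reindexing it performs hands $\Mext[0]{\xi_i}$ to the $\sigma\concat[n]$-branch as its output and shifts the other members down one index on the $\sigma$-branch, so if $f$ extends $\Mext[0]{\xi_i}$ we pass to the $\sigma\concat[n]$-branch and append $n$ to $g$, and otherwise remain on the $\sigma$-branch with the index $k$ dropped by one. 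Since the reindexing performed by the $\module{S}[n]{e}$ ($e \ge 0$) modules never raises the index $f$ follows while each $\module{S}[n]{-1}$ node on a $\sigma$-branch strictly lowers it, after boundedly many steps $f$ extends $\Mext[0]{\xi_j}$ at some $\module{S}[n]{-1}$ node $\xi_j$ and a new digit of $g$ is committed. Hence the branch is infinite, $g$ grows without bound, it passes through the node implementing $\module{H}[+]{g\restr{m}}$ for every $m$, and there the output equals $\hT(g\restr{m}) \subfun f$ with $\lh{\hT(g\restr{m})} \to \infty$; therefore $\hT(g) = f$.

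The backward direction and the $\module{P}{e}$, $\module{L}{e}$, $\module{L}[n]{e}$ and $\module{H}{\sigma}$ cases are immediate from Lemma \ref{lem:path-single-modules}; the main obstacle is the analysis of the $\omega$-branching modules in the second case --- verifying that the descent of $f$ through the nested families $\Mexts{\xi_i}$ is well defined and never stalls (so that $g$ is genuinely total), and that $f$ cannot be carried onto a spurious $\module{S}[n]{e}$ outcome, which is where the length bounds recorded in Section \ref{ssec:module-S-n-e} (as used in Lemma \ref{lem:path-mext-modules}) and the bookkeeping of Conditions \ref{cond:output}, \ref{cond:no-reinit} and \ref{cond:mod-seperation} are needed.
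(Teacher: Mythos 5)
Your overall architecture — split into the two directions, handle the easy backward direction by noting outputs always land in $T$, and prove the forward direction by chasing $f$ down $\pTree$ along the truepath using Lemmas \ref{lem:path-single-modules} and \ref{lem:path-mext-modules} to show $f$ must at each step extend the output (or some family member) of the true successor — is the same as the paper's, just phrased as a direct construction of $g$ rather than a minimal-counterexample argument. The backward direction, the $\module{P}{e}$ and $\module{H}{\sigma}$ cases, and the bookkeeping at $\module{S}[n]{e}$ and $\module{S}[n]{-1}$ nodes are all handled correctly.

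However, there is a gap you have not closed, and it is exactly the one the paper flags in its proof. You write that if $\tau = f\restr{\lh{\Mout{\hat\xi}}} \nsupfun \Mout{\hat\xi}$ then ``Lemma \ref{lem:path-single-modules} would leave only finitely many extensions of $\tau$ in $T$, contradicting $\tau \subfun f \in [T]$'' — but the conclusion of Lemma \ref{lem:path-single-modules} is only that finitely many extensions of $\tau$ are enumerated \emph{by modules at nodes $\hat\nu \supfun \xi_i$}. The same restriction appears in Lemma \ref{lem:path-mext-modules}. Neither lemma rules out extensions of $\tau$ enumerated by modules at nodes \emph{incomparable} with $\xi_i$, in particular nodes sitting above a wrong outcome of some proper ancestor of $\xi_i$. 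For most module types on the path this is harmless because those wrong outcomes are visited only finitely often, but for an ancestor $\nu$ implementing $\module{L}{e}$ whose true outcome is $\godelnum{\conv}$, wrong outcomes of $\nu$ are visited infinitely often, and the outputs handed to them (the successive values of $\delta$) do extend $\Mout{\nu} \subfun \Mout{\xi_i}$ and are therefore not automatically incompatible with $\tau$. The paper closes this by observing that only finitely many of those visits occur at an outcome $\godelnum{\eta}$ with $\Mout{\eta} \subfun f$, so their contributions above $\tau$ are finite. You need an explicit argument to this effect (or some other bound on enumerations from nodes incomparable to the $\xi_i$) before your invocation of ``only finitely many extensions of $\tau$ in $T$'' is justified; as written, the inference does not follow from the lemmas you cite.
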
 
\begin{proof}
For the if direction suppose that \( \hT(g) = f \).  Since any module \( \module{M}{\xi^{-}} \) which sets \( \Mout{\xi} = \sigma \) guarantees that \( \sigma \in T \) (and we always have \( \sigma^{-} \in T \)) and we have that \( \hT(g) \in T \).  For the other direction, suppose \( f \neq \hT(g) \) for any \( g \).  Let  \( \xi \) be a \( \subfun \) minimal element such that \( \Mout{\xi^{-}} \subfun f \) and, if \( \Mexts{\xi^{-}} \) defined, with \( f \) extending some element in \( \Mexts{\xi^{-}} \) but one of those conditions failing for \( \xi \).  We then argue using  Lemmas \ref{lem:path-single-modules} and \ref{lem:path-mext-modules} that there is some \( \tau \subfun f \) such that only finitely many extensions of  \( \tau \) are enumerated into \( T \).

The only other real difficulty occurs with nodes of the form \( \module{L}{e} \) since those are the only modules that might be assigned to some \( \nu \subfunneq \xi^{-} \)  where the above lemmas (or, for \( \module{L}{e} \), fact that we only visit incorrect outcomes finitely many times) don't directly exclude the possibility that some module extending an outcome to the right of the true outcome of \( \nu \) could contribute infinitely many elements compatible with \( f \).  Unless the outcome of \( \nu \) is \( \godelnum{\conv} \) this clearly can't happen as in those cases we settle on the true outcome after finitely many stages.  However, there will only be finitely many stages at which \( \nu \) visits an outcome of the form \( \godelnum{\eta} \) where \( \Mout{\eta} \subfun f \) ensuring that the false outcomes of \( \nu \) don't cause a problem. 

Note that, all modules excepting those of the form  \( \module{H}{\sigma} \)  clearly have a well-defined leftmost outcome that's visited infinitely often and by Lemma \ref{lem:path-single-modules}, in the case where \( \xi \) implements \( \module{H}{\sigma} \) and  \( \xi \) has  no successor along the truepath  is also a case where \( \Mout{\xi} \) can't be extended to any path through \( T \). 
\end{proof}

With this result in hand, we can now verify the properties claimed in Proposition \ref{prop:tree-build}.

\begin{lemma}\label{lem:tree-build:rng}
\( \rng \hat{T} \subset T \) and \( [\hat{T}] = [T] \), that is, claim \ref{prop:tree-build:hat-t} of Proposition \ref{prop:tree-build} holds.
\end{lemma}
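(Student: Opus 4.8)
The plan is to treat the two assertions separately; given the machinery already set up, each is essentially a matter of unwinding definitions, so this lemma is a bookkeeping check rather than a fresh construction.

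\emph{The containment $\rng \hat{T} \subseteq T$.} Suppose $\hat{T}(\sigma)$ is defined. By Definition~\ref{def:hT} there is $\xi = (\alpha,\sigma) \in \tpath$ with $\xi^{-}$ implementing $\module{H}{\sigma}$ and $\hat{T}(\sigma) = \Mout{\xi}$; the node $\xi$ is unique by Definition~\ref{def:hT} together with the structure of $\pTree$, so $\hat{T}$ is single valued. Let $s$ be the first stage at which $\xi \in \tpath[s]$, so that $s_\xi = 0$. Then the first clause of Condition~\ref{cond:output}, applied to the module $\module{M}{\xi^{-}}$, forces $\Mout{\xi}$ to be set during stage $s$ and to satisfy $\Mout{\xi} \in T_{s+1} \subseteq T$. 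Hence $\hat{T}(\sigma) \in T$.

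\emph{The equality $[\hat{T}] = [T]$.} Unwinding the definition of paths through an f-tree, $f \in [\hat{T}]$ means $f = \hat{T}(g) \eqdef \bigcup_n \hat{T}(g\restr{n})$ for some $g \in \baire$ on which $\hat{T}$ is total; this union is a genuine total function since the lengths $\lh{\hat{T}(g\restr{n})}$ are unbounded, as guaranteed by the construction (this is claim~\ref{prop:tree-build:len}, whose verification uses only that consecutive values of $\hat{T}$ along a path strictly extend one another). But Lemma~\ref{lem:ht-yields-paths} asserts exactly that, for $f \in \baire$, we have $f \in [T]$ iff $f = \hat{T}(g)$ for some such $g$. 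Since the latter condition is precisely membership of $f$ in $[\hat{T}]$, we conclude $[\hat{T}] = [T]$. (Concretely, the inclusion $[\hat{T}] \subseteq [T]$ also follows directly from the first part: every $\hat{T}(g\restr{n})$ lies in $T$, and since $T$ is $\subfun$-closed and these strings have unbounded length, every initial segment of $f = \hat{T}(g)$ lies in $T$. The reverse inclusion is immediate from Lemma~\ref{lem:ht-yields-paths}.)

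\emph{Main obstacle.} There is essentially none here: once one knows that the modules respect Condition~\ref{cond:output} (part of checking the construction is well formed) and that Lemma~\ref{lem:ht-yields-paths} holds, both halves drop out. The only point meriting a sentence of care is the identification of the abstract set $[\hat{T}]$ with the concrete $\{\hat{T}(g) : g \in \baire\}$ used in Lemma~\ref{lem:ht-yields-paths}, which is why the remark on unbounded lengths is worth spelling out explicitly.
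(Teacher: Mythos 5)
Your proof is correct and follows essentially the same route as the paper's: the containment $\rng \hat{T} \subseteq T$ is deduced from Condition~\ref{cond:output}, and the equality $[\hat{T}] = [T]$ is read off from Lemma~\ref{lem:ht-yields-paths}. You simply spell out a few more of the definitional unwindings (e.g. the identification of $[\hat{T}]$ with $\{\hat{T}(g) : g \in \baire\}$) that the paper leaves implicit.
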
 
\begin{proof}
By Condition \ref{cond:output} anytime a module sets \( \Mout{\xi} = \tau \) it ensures that \( \tau \in T \).  This ensures the first part of the claim holds.  The second half of the claim is just Lemma \ref{lem:ht-yields-paths}.   
\end{proof}

\begin{lemma}\label{lem:module-H-is-correct}
If \( \xi \in \tpath \) and \( \xi \)  implements  \( \module{H}{\sigma} \) then \( \xi \) has a well-defined outcome iff \( S(\sigma)\conv \) and that outcome is always correct about the membership of \( \sigma  \) in \( S \).
\end{lemma}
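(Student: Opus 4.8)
The plan is to track, over the infinitely many visits \(\module{H}{\sigma}\) makes to \(\xi\), which pair \((i,\hat n)\) it selects, and to show this sequence of pairs stabilises precisely when \(S(\sigma)\) converges. Write \(i^{\ast}=S(\sigma)=\lim_{s_1}\rho_1(\sigma,s_1)\) when that limit exists; recall (Lemma \ref{lem:limit-lemma}) that \(\rho(\sigma,\cdot,\cdot)\) and \(\rho_1(\sigma,\cdot)\) are total and that \(\rho_1(\sigma,s_1)=\lim_{s_0}\rho(\sigma,s_1,s_0)\). First I would dispose of the bookkeeping: since \(\xi\in\tpath\) it is visited infinitely often, so the evidence parameter \(k\) attached to any second coordinate grows without bound as soon as outcomes carrying that coordinate are selected infinitely often; and, by Condition \ref{cond:no-reinit} together with the \(m\)-counters, the actual outcomes of \(\xi\) behave exactly like the ``pretend'' outcomes \((i,\hat n)\) under the lexicographic order, so that (i) the leftmost outcome of \(\xi\) visited infinitely often exists iff the selected pair is eventually constant, and (ii) when it is, the \(m\)-counter of the limiting pair is eventually fixed. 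It therefore suffices to analyse the sequence of selected pairs.

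Suppose \(S(\sigma)=i^{\ast}\) converges and let \(\hat n^{\ast}\) be least with \(\rho_1(\sigma,n)=i^{\ast}\) for all \(n\ge\hat n^{\ast}\); by minimality, \(\rho_1(\sigma,\hat n^{\ast}-1)=1-i^{\ast}\) if \(\hat n^{\ast}>0\). There are only finitely many pairs lexicographically below \((i^{\ast},\hat n^{\ast})\), namely those with \(\hat n<\hat n^{\ast}\) together with \((0,\hat n^{\ast})\) when \(i^{\ast}=1\). For each such \((i,\hat n)\) there is a ``bad cell'' \(n^{\dagger}\in[\hat n,\hat n^{\ast}]\) --- namely \(n^{\dagger}=\hat n^{\ast}-1\) if \(i=i^{\ast}\) and \(n^{\dagger}=\hat n^{\ast}\) if \(i=1-i^{\ast}\) --- at which \(\rho(\sigma,n^{\dagger},\cdot)\) takes the value \(i\) only finitely often; so once this pair's evidence parameter exceeds that finite bound (and \(n^{\dagger}\) has entered its window \([\hat n,\hat n+k]\), which needs only \(k\ge n^{\dagger}-\hat n\)) the pair can never again be valid, hence is selected only finitely often. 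An induction over these finitely many pairs then yields a stage \(s_0\) past which none of them is ever valid, so past \(s_0\) the module selects \((i^{\ast},\hat n^{\ast})\) whenever that pair is valid. And \((i^{\ast},\hat n^{\ast})\) is valid at infinitely many stages: otherwise its evidence parameter would be bounded --- the companion pair \((1-i^{\ast},\hat n^{\ast})\) is also selected only finitely often, by the same bad-cell argument at \(n=\hat n^{\ast}\) --- and then, its window being fixed and finite and each cell \(n\ge\hat n^{\ast}\) carrying infinitely many witnesses of the value \(i^{\ast}\), the pair would be valid cofinitely, hence selected cofinitely, a contradiction. Thus \((i^{\ast},\hat n^{\ast})\) is selected at infinitely many visits past \(s_0\); with its \(m\)-counter fixed, \(\xi\) has the well-defined outcome \(\godelnum{(i^{\ast},\hat n^{\ast},m^{\ast})}\), whose first coordinate \(i^{\ast}=S(\sigma)\) correctly records whether \(\sigma\in S\).

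Conversely, suppose \(S(\sigma)\) diverges, so both \(0\) and \(1\) occur infinitely often among the values \(n\mapsto\rho_1(\sigma,n)\). Given any pair \((i,\hat n)\), choose \(n_0\ge\hat n\) with \(\rho_1(\sigma,n_0)=1-i\); then \(\rho(\sigma,n_0,\cdot)\) takes the value \(i\) only finitely often, so once \((i,\hat n)\)'s evidence parameter exceeds that bound and \(n_0\) has entered its window the pair is never again valid, hence selected only finitely often. Since each pair is selected only finitely often while some pair is selected at each of the infinitely many visits, the second coordinates of the selected pairs are unbounded, so no outcome of \(\xi\) is visited infinitely often and \(\xi\) has no well-defined outcome.

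The real work is the bookkeeping flagged in the first paragraph: keeping straight the interplay between the lexicographic order on the pretend pairs \((i,\hat n)\), the per-second-coordinate evidence counters, and the actual \(\pTree\)-order on outcomes --- in particular, turning ``selected only finitely often'' for the lexicographically earlier pairs into ``never valid past \(s_0\)'' (the induction above), and checking that the \(m\)-counters then pin down a genuine leftmost infinitely-often-visited successor of \(\xi\), so that Condition \ref{cond:no-reinit} is respected. Everything relativises verbatim: replace each object by its \(X\)-relativisation and read \(S\) off a \(\jjump{X}\)-computable \(\rho\).
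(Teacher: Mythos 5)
Your proof is correct and follows essentially the same strategy as the paper's: identify, in the convergence case, the minimal $\hat n^{\ast}$ from which $\rho_1$ stabilises at $i^{\ast}=S(\sigma)$, show that each lexicographically earlier pair is killed by a ``bad cell'' of $\rho_1$ once its evidence window grows to include it, and deduce stabilisation of the $m$-counter; and, in the divergence case, show that every pair has a bad cell and is therefore selected only finitely often. You are somewhat more explicit than the paper at two points -- checking that $(i^{\ast},\hat n^{\ast})$ really is selected infinitely often (the paper leaves the ``and thus there is some $m$'' step to the reader), and flagging that the lexicographic order on pairs and the literal $\pTree$-order induced by $\godelnum{(i,\hat n,m)}=2\pair{\hat n}{m}+i$ have to be reconciled via the $m$-counters -- but these are refinements of, not departures from, the paper's argument.
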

By well-defined outcome we mean a leftmost outcome that is visited infinitely often.

\begin{proof}
Suppose that the module at \( \xi \) has the true outcome \( \godelnum{(i, \hat{n}, m)} \).  If \( S(\sigma) \neq i \) (including divergence) then, since \( \rho_1(\sigma, s_1) \) (first limit), can be taken to be always defined, then for some \( n > \hat{n} \) we have \( \rho_1(\sigma, n) =  1 -i  \).  Thus, for some  \( k \) we have \( \rho(\sigma, n, k') =  1 - i \) for all \( k' \geq k \) contradicting the assumption that we visit this outcome more than \( n + k + 1 \) times.  Thus, \( \module{H}{\sigma} \) is never incorrect and thus must not have an outcome whenever \( S(\sigma)\diverge \).

Now suppose that \( S(\sigma)\conv = i \).    For some minimal \( \hat{n} \) we have \( \rho_1(\sigma, s_1) = i \) for all \( s_1 \geq \hat{n} \).  By minimality, there is some last stage at which any outcome of the form \(\godelnum{(i, n, m)}   \) with \( n < \hat{n} \) is visited and as \( \rho_1(\sigma, \hat{n}) = i \) there is some last stage at which any outcome of the form \(\godelnum{(1 - i, \hat{n}, m)}   \)  is visited.  Thus, after some point we never visit an outcome of the form \(\godelnum{(i', n, m)}   \) with \( (i', n) \) lexicographically before \( (i, \hat{n}) \) and thus there is some \( m \) for which  \(\godelnum{(i, \hat{n}, m)}   \) is the true outcome. 
\end{proof}

\begin{lemma}\label{lem:tree-build:T-is-image}
For all \( g \in \baire \), \( \hT(g) \) is total iff \( g \in [S] \) iff \( \hT(g) \in [T] \).  Moreover, \( \hT(\sigma)\conv \) iff \( S(\sigma)\conv \) and all \( \sigma' \subfunneq \sigma \) are in \( S \).   

\end{lemma}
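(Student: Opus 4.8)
I would prove the ``moreover'' clause first, by induction on $\lh{\sigma}$, and then read off the three-way equivalence from it together with Lemma~\ref{lem:ht-yields-paths}, Lemma~\ref{lem:tree-build:rng} and Lemma~\ref{lem:module-H-is-correct}. Throughout I take $\hT(g)$ to denote $\bigcup\set{\hT(g\restr{n})}{\hT(g\restr{n})\conv}$, the usual value of an f-tree along a path, and ``$\hT(g)$ total'' to mean $\hT(g\restr{n})\conv$ for every $n$.

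\textbf{The moreover.} By Definition~\ref{def:hT}, $\hT(\sigma)\conv$ iff $\tpath$ contains a node $\nu$ with $\nu^{-}$ implementing $\module{H}{\sigma}$; and if the node $\xi$ implementing $\module{H}{\sigma}$ lies on $\tpath$, then such a $\nu$ — a successor of $\xi$ carrying $\Mout{\nu}=\hT(\sigma)$ — exists iff $\module{H}{\sigma}$ has a well-defined outcome, which by Lemma~\ref{lem:module-H-is-correct} happens iff $S(\sigma)\conv$ (when $S(\sigma)\diverge$ the module $\module{H}{\sigma}$ moves left infinitely often, so no successor of $\xi$ is on $\tpath$ and $\hT(\sigma)\diverge$). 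So it suffices to show, by induction on $\lh{\sigma}$, that the node implementing $\module{H}{\sigma}$ lies on $\tpath$ exactly when every $\sigma'\subfunneq\sigma$ is in $S$; the base case is read directly off the module governing $\estr$ at the root. For $\sigma=\tau\concat[m]$: by Definition~\ref{def:modules-on-ptree} the node implementing $\module{H}{\sigma}$ can only be reached on $\tpath$ below a node implementing $\module{H}[+]{\tau}$, and $\module{H}[+]{\tau}$ is installed precisely above the ``$\tau\in S$'' outcome of a node implementing $\module{H}{\tau}$. By the induction hypothesis the node implementing $\module{H}{\tau}$ is on $\tpath$ iff every $\sigma'\subfunneq\tau$ is in $S$, and by Lemma~\ref{lem:module-H-is-correct} its true outcome (if any) exists iff $S(\tau)\conv$ and correctly reports whether $\tau\in S$; hence the $\module{H}[+]{\tau}$ node lies on $\tpath$ iff every $\sigma'\subfunneq\tau$ is in $S$ and $\tau\in S$, i.e. iff every $\sigma'\subfunneq\sigma$ is in $S$. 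Finally the finitely many modules of the forms $\module{S}[n]{e}$, $\module{S}[n]{-1}$, $\module{P}{e}$, $\module{L}{e}$, $\module{L}[n]{e}$ sitting between $\module{H}[+]{\tau}$ and $\module{H}{\sigma}$ each have a well-defined leftmost infinitely-often-visited outcome (and the $\module{S}[n]{-1}$ nodes put both successors on $\tpath$), as noted in the proof of Lemma~\ref{lem:ht-yields-paths}, so the $\module{H}{\sigma}$ node lies on $\tpath$ iff the $\module{H}[+]{\tau}$ node does. Combining, $\hT(\sigma)\conv$ iff every $\sigma'\subfunneq\sigma$ is in $S$ and $S(\sigma)\conv$.

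\textbf{The equivalences.} Since $\dom\hT$ is $\subfun$-closed, for fixed $g$ the set $\set{n}{\hT(g\restr{n})\conv}$ is an initial segment of $\omega$, so $\hT(g)$ is total iff $\hT(g\restr{n})\conv$ for every $n$; by the moreover this is equivalent (applying it to $g\restr{n+1}$) to $g\restr{n}\in S$ for all $n$, i.e. to $g\in[S]$. If $g\in[S]$ then, $\hT(g)$ being total, each $\hT(g\restr{n})\in T$ by Lemma~\ref{lem:tree-build:rng}; these strings are $\subfun$-increasing with $\lh{\hT(g\restr{n})}\geq n$ by clause~\ref{prop:tree-build:len}, and $T$ is $\subfun$-closed, so $\hT(g)=\bigcup_n\hT(g\restr{n})$ is a path through $T$. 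Conversely, if $\hT(g)\in[T]$ then $\hT(g)$ is an infinite string, which forces $\set{n}{\hT(g\restr{n})\conv}=\omega$, i.e. $\hT(g)$ total. This closes the loop $g\in[S]\iff\hT(g)\text{ total}\iff\hT(g)\in[T]$.

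\textbf{Main obstacle.} The content is entirely in the bookkeeping of the inductive step: pinning down the order in which $\module{H}[+]{}$, the $\module{S}[n]{e}$'s, $\module{S}[n]{-1}$, $\module{P}{e}$, $\module{L}{e}$, $\module{L}[n]{e}$ and $\module{H}{}$ occur along $\tpath$ between consecutive $\module{H}{}$-nodes, checking that the branching at $\module{S}[n]{-1}$ does not interfere with ``lies on $\tpath$'', and confirming that ``no module above the $\sigma\nin S$ outcome'' genuinely blocks $\module{H}{\sigma\concat[m]}$ from ever being reached. All of these are already latent in Definition~\ref{def:modules-on-ptree} and in the proof of Lemma~\ref{lem:ht-yields-paths}, so assembling them, not any new idea, is the only real work.
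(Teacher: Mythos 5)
Your proposal is correct and takes essentially the same approach as the paper: both observe that every module other than an $\module{H}{\sigma}$-module has a well-defined true outcome, invoke Lemma~\ref{lem:module-H-is-correct} to characterize exactly when the $\module{H}{\sigma}$-module on $\tpath$ has a (correct) outcome, and then read the three-way equivalence off the ``moreover'' clause. You make the induction on $\lh{\sigma}$ explicit and supply the short argument that $\hT(g)$ total iff $\hT(g)\in[T]$ (via Lemma~\ref{lem:tree-build:rng} and clause~\ref{prop:tree-build:len}), whereas the paper compresses both steps into a sentence, but there is no difference in the underlying ideas.
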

\begin{proof}
We first verify the moreover claim.  Note that, for any module besides \( \module{H}{\sigma} \) there is always is a well-defined true outcome.  Thus, by an examination of the construction, we can see that the only way for  \( \hT(\sigma) \) to be undefined is either if for some \( \sigma' \subfunneq \sigma \) the module of the form  \( \module{H}{\sigma'} \) on the truepath doesn't have a true outcome guessing \( \sigma' \in S \) or if the module    \( \module{H}{\sigma} \) on the truepath doesn't have any true outcome.  Clearly, if either of those cases obtain then we actually do have \( \hT(\sigma)\diverge \) so this result follows from Lemma \ref{lem:module-H-is-correct}    

The main claim follows trivially since  \( g \in [S] \) iff all \( \sigma \subfun g \) are elements in \( S \).       
\end{proof}

\begin{lemma}\label{lem:tree-build:w-branching}
\( \hT \) is an f-tree  
\end{lemma}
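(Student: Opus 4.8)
The plan is to verify the three defining clauses of an f-tree for $\hT$: that $\dom\hT$ is $\subfun$-closed, that $\hT$ respects $\subfun$, and that $\hT$ preserves longest common initial segments — the last meaning that for each non-terminal $\sigma\in\dom\hT$ the value $\hT(\sigma\concat[n])(\lh{\hT(\sigma)})$ is strictly monotonic in $n$ on $\dom\hT$ (given this and $\subfun$-monotonicity, the $\leftof$-respecting clause and incompatibility-preservation follow by passing to longest common initial segments, so I would only prove these). The first two are quick. For the domain, Lemma~\ref{lem:tree-build:T-is-image} gives $\sigma\in\dom\hT$ iff $S(\sigma)\conv$ and every $\sigma'\subfunneq\sigma$ lies in $S$, a condition inherited by initial segments since $S$ is a tree. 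For $\subfun$-monotonicity, recall from Definition~\ref{def:hT} that $\hT(\rho)$ is $\Mout{\cdot}$ at the truepath successor of the node implementing $\module{H}{\rho}$; when $\tau\subfunneq\sigma$, the module layout of Definition~\ref{def:modules-on-ptree} (after $\module{H}{\tau}$ come $\module{H}[+]{\tau}$, the $\module{S}[n]{e}$ and $\module{S}[n]{-1}$ modules splitting off each $\tau\concat[n]$, then $\module{H}{\tau\concat[m]}$, and so on) places the node computing $\hT(\tau)$ as a proper $\subfun$-predecessor on $\pTree$ of the one computing $\hT(\sigma)$, and since both lie on the truepath Condition~\ref{cond:output} gives $\hT(\tau)\subfun\hT(\sigma)$ (properly, since e.g.\ $\module{H}{\tau\concat[m]}$ appends a fresh element).

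For the remaining clause, fix a non-terminal $\sigma\in\dom\hT$; then $\sigma\in S$ (else no $\sigma\concat[n]$ meets the criterion of Lemma~\ref{lem:tree-build:T-is-image}), so the truepath node $\xi_0$ implementing $\module{H}[+]{\sigma}$ exists and $\Mout{\xi_0}=\hT(\sigma)$. Write $\ell=\lh{\hT(\sigma)}$. Since the witness $c$ chosen by $\module{H}{\sigma}$ is fresh, $\hT(\sigma)$ is $\subfun$-maximal in $T_s$ when $\module{H}[+]{\sigma}$ first acts, so this module passes $\Mout{}=\hT(\sigma)$ to its successor and (Condition~\ref{cond:output}) sets $\Mexts{}$ to immediate extensions $\hT(\sigma)\concat[k]$ with strictly increasing branch values $k_0<k_1<\cdots$. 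Along the subtree of the truepath above $\xi_0$ spanned by the $\module{S}[n]{e}$-modules ($e\geq 0$) and $\module{S}[n]{-1}$-modules I would carry the invariant: the output along the branch still carrying second coordinate $\sigma$ equals $\hT(\sigma)$; every current member of $\Mexts{}$ branches off $\hT(\sigma)$ exactly at position $\ell$, indexed in increasing order of branch value; and $\Mext[0]{}$ is only ever extended. The first point holds because $\module{S}[n]{e}$ leaves $\Mout{}$ fixed and the $\estr$-branch of $\module{S}[n]{-1}$ keeps it; the second then follows from Condition~\ref{cond:output}; the third is the explicit remark in Section~\ref{ssec:module-S-n-e} together with the fact that $\module{S}[n]{e}$ only extends its input members of $\Mexts{}$ and re-indexes them to an increasing sub-family retaining the leftmost. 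Then the $\str{n}$-branch of $\module{S}[n]{-1}$ hands off $\Mout{}=\Mext[0]{}\supfun\hT(\sigma)\concat[k_n]$, with $k_n$ the current leftmost branch value, while the $\estr$-branch passes the strictly larger tail onward; hence $k_0<k_1<\cdots$. Finally $\module{P},\module{L},\module{L}[n]{e},\module{H}{\sigma\concat[n]}$ only extend their output, so $\hT(\sigma\concat[n])\supfun\hT(\sigma)\concat[k_n]$ and therefore $\hT(\sigma\concat[n])(\ell)=k_n$, strictly increasing in $n$.

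The main obstacle is exactly the bookkeeping inside $\module{S}[n]{e}$: one must confirm that the re-indexing never promotes a later member of $\Mexts{}$ ahead of the leftmost, that $\Mext[0]{}$ is never replaced by a longer string branching off $\hT(\sigma)$ past position $\ell$, and that successive $\str{n}$-branches consume strictly increasing branch values. These are precisely the behaviours Condition~\ref{cond:output} and the remark in Section~\ref{ssec:module-S-n-e} were arranged to enforce, so the difficulty lies in checking that the invariant survives each of the several cases of $\module{H}[+]{\sigma}$ and $\module{S}[n]{e}$, rather than in any single sharp step.
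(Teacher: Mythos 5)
Your proof is correct and takes essentially the same route as the paper's: the paper's version consists of exactly the two observations you prove in detail (that $\hT$ is $\subfun$-respecting, and that the $\module{H}[+]{\sigma}$/$\module{S}[n]{e}$/$\module{S}[n]{-1}$ modules together arrange for $\hT(\sigma\concat[n])$ to branch off $\hT(\sigma)$ at position $\lh{\hT(\sigma)}$ with strictly increasing branch values), but the paper declares the second point "immediate from the operation" of those modules without the explicit invariant-chasing you supply. Your filled-in argument — in particular the observation that $\Mout{\nu}=\hT(\sigma)$ exactly because $\module{H}{\sigma}$ chose $c$ fresh, that Condition~\ref{cond:output} pins each $\Mext[m]{}$ to branch at position $\ell$ with increasing $k$-value, and that $\module{S}[n]{-1}$'s index shift makes consecutive $\str{n}$-branches consume strictly increasing branch values — is exactly the bookkeeping the paper leaves implicit, and it checks out.
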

\begin{proof}
As \( \hT \) is clearly \( \subfun \) respecting it is enough to show that whenever \( \hT(\sigma) \) isn't terminal then \( i < j \) implies that  \( \hT(\sigma\concat[i]) \) and \( \hT(\sigma\concat[j]) \) extend incompatible immediate extensions of  \( \hT(\sigma) \)  and \( \hT(\sigma\concat[i]) \) is lexicographically below \( \hT(\sigma\concat[j]) \).  However, this is immediate from the operation of  \( \module{H}[+]{\sigma} \) and the fact that nodes of the form \( \module{S}[n]{e} \) maintain these properties.

\end{proof}

We can use this to prove the homeomorphism claim from Proposition \ref{prop:tree-build}.

\begin{lemma}\label{lem:tree-build:homeo}
  \( \hat{T} \) is a homeomorphism of \( [S] \) with \( [T] \).  That is claim \ref{prop:tree-build:homeo} of Proposition \ref{prop:tree-build} holds.  
\end{lemma}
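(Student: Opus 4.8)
The plan is to derive the homeomorphism claim entirely from results already in hand, since the real combinatorial work is done. By Lemma \ref{lem:tree-build:w-branching}, \( \hT \) is an f-tree, so it is \( \subfun \)- and \( \leftof \)-respecting, carries incompatible strings in its domain to incompatible strings, and (being a branching f-tree) has \( \hT(\sigma\concat[i]) \) strictly extending \( \hT(\sigma) \) whenever \( \sigma\concat[i] \in \dom\hT \). A map of this sort automatically induces a continuous injection on path spaces, so what remains is to pin down the domain and range of \( g \mapsto \hT(g) \) and to check that the inverse is continuous.

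First I would verify that \( g \mapsto \hT(g) \) is a bijection from \( [S] \) onto \( [T] \). It is well defined with the right source and target: by Lemma \ref{lem:tree-build:T-is-image}, \( \hT(g) \) is total exactly when \( g \in [S] \), and in that case \( \hT(g) \in [T] \). It is injective, since \( g \neq g' \) gives \( g\restr{n} \incompat g'\restr{n} \) for some \( n \), whence \( \hT(g\restr{n}) \incompat \hT(g'\restr{n}) \). It is onto \( [T] \): given \( f \in [T] \), Lemma \ref{lem:ht-yields-paths} supplies \( g \in \baire \) with \( \hT(g) = f \), and since \( \hT(g) \) is then total, Lemma \ref{lem:tree-build:T-is-image} forces \( g \in [S] \). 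Continuity of \( g \mapsto \hT(g) \) is then immediate: along any \( g \in [S] \) the chain \( \hT(g\restr{0}) \subfunneq \hT(g\restr{1}) \subfunneq \cdots \) is strictly increasing (so the lengths tend to infinity), hence \( \hT(g)\restr{n} \) is already determined by a finite initial segment of \( g \); one can also just quote claim \ref{prop:tree-build:len} of Proposition \ref{prop:tree-build}.

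The one step needing genuine care — and the only real obstacle, since everything else is a direct appeal to the preceding lemmas — is continuity of the inverse, because \( \hT \) is \( \omega \)-branching and so each level of \( \dom\hT \) is infinite. Fixing \( f \in [T] \) with preimage \( g \in [S] \) and \( n \in \omega \), I would first observe that \( g\restr{n} \) is the unique \( \sigma \in \dom\hT \) of length \( n \) with \( \hT(\sigma) \subfun f \): it exists (take \( \sigma = g\restr{n} \)) and is unique because any other \( \sigma' \) of length \( n \) is incompatible with it, forcing \( \hT(\sigma') \incompat \hT(\sigma) \). To see that this \( \sigma \) is located from a bounded initial segment of \( f \), I would invoke the f-tree monotonicity property: along \( g\restr{n} \) and its predecessors the values \( \hT(\tau\concat[i])(\lh{\hT(\tau)}) \) are strictly monotone in \( i \), so at each of the finitely many levels below \( n \) only finitely many immediate successors can have \( \hT(\tau\concat[i]) \) compatible with the portion of \( f \) already fixed. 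Hence there is an \( m \) with \( g\restr{n} \) depending only on \( f\restr{m} \), giving continuity of the inverse. Combining the three steps yields that \( \hT \) restricts to a homeomorphism of \( [S] \) with \( [T] \), which is claim \ref{prop:tree-build:homeo} of Proposition \ref{prop:tree-build}.
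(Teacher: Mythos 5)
Your argument follows the same route as the paper's proof and is correct: you invoke Lemma \ref{lem:tree-build:T-is-image} to identify \( [T] \) as the image of \( [S] \), Lemma \ref{lem:ht-yields-paths} for surjectivity, and the f-tree structure from Lemma \ref{lem:tree-build:w-branching} for injectivity — exactly the ingredients the paper uses. The only difference is that the paper compresses the continuity of \( \hT \) and its inverse into the single word ``evidently,'' whereas you spell out the inverse-continuity argument via the f-tree monotonicity at each level, which is a useful expansion of what the paper takes for granted. One small sharpening: at each level there is in fact a \emph{unique} immediate successor compatible with the fixed segment of \( f \) (they pairwise disagree at position \( \lh{\hT(\tau)} \)), not merely finitely many, which is what actually pins down \( g\restr{n} \) from \( f\restr{m} \) with \( m = \lh{\hT(g\restr{n})} \).
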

\begin{proof}
By Lemma \ref{lem:tree-build:T-is-image} we know that \( [T] \) is the image of \( [S] \) under \( \hT \).  Evidently, both \( \hT \) and its inverse are continuous so it remains only to show that \( \hT \) is injective.  However, this follows from Lemma \ref{lem:tree-build:w-branching}.

\end{proof}

\begin{lemma}\label{lem:tree-build:comp}
If \( \Upsilon_2 \) is a computable functional then we can uniformly find computable functionals \( \Upsilon, \hat{\Upsilon} \) such that if \( \Upsilon_2(\jjump{X}) = S \) then \( \Upsilon(X) = T \) and \( \hat{\Upsilon}(\jjump{X}) = \hT \) where \( T, \hT \) are as constructed above.     
\end{lemma}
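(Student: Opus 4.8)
The plan is to extract $\Upsilon$ and $\hat\Upsilon$ directly from the construction: running the stagewise approximation is an $X$-computable process, which will give $T \Tleq X$, while membership in the true path $\tpath$ is a question decidable from $\jjump X$, which will give $\hT \Tleq \jjump X$; the uniformity will then amount to checking that each step passes effectively through an index for $\Upsilon_2$.

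First I would produce $\Upsilon$. The entire stage $s$ configuration --- the finite set $\tpath[s] \subset \pTree$, the counters $s_\xi$, the internal state variables carried by each module visited so far, the finite tree $T_s$, and the finite partial function $\hT[s]$ --- is defined by recursion on $s$ from the stage $s-1$ configuration. An inspection of the modules shows that each such step appeals only to: (i) honest $X$-oracle computations, such as $\recfnl[s]{e}{X}{}$, $\recfnl[s]{e}{\tau\Tplus X}{}$, and tests for $X$-relativized $e$-splittings among strings of $T_s$, all bounded by $s$ steps and hence computable in $X$; (ii) the functional $\rho$ of Lemma \ref{lem:limit-lemma}, which that lemma supplies, uniformly in an index for $\Upsilon_2$, as a functional computable in $X$ with $S(\sigma) = \lim_{s_1}\lim_{s_0}\rho(\sigma, s_1, s_0)$; and (iii) bookkeeping that is outright computable --- the G\"odel numbering of strings, the pairing function, the enumeration of $\pTree$, and the module assignment of Definition \ref{def:modules-on-ptree}. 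Hence a single computable functional outputs, from oracle $X$, the sequence $s \mapsto T_s$, and since $\sigma \in T$ iff $\sigma \in T_{\godelnum\sigma}$ --- membership being decided by that stage --- this functional also computes the characteristic function of $T$ from $X$, i.e.\ it is the required $\Upsilon$, with an index computable from that of $\Upsilon_2$.

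Next I would produce $\hat\Upsilon$. Fix $\sigma$. Using $S \Tleq \jjump X$ and Lemma \ref{lem:tree-build:T-is-image}, first decide whether $S(\sigma)\conv$ and every $\sigma' \subfunneq \sigma$ lies in $S$; if not, declare $\hT(\sigma)\diverge$. Otherwise Definition \ref{def:hT} supplies a unique $\xi = (\alpha, \sigma) \in \tpath$ whose predecessor implements $\module{H}{\sigma}$, and $\hT(\sigma)$ is the stable output $\Mout{\xi}$. The crux is that $\tpath$ is uniformly decidable from $\jjump X$: by Condition \ref{cond:no-reinit} a node lies on $\tpath$ precisely when it is visited infinitely often, and ``$\xi$ is visited infinitely often'' is a $\forall m\,\exists s > m$ statement whose matrix ``$\xi \in \tpath[s]$'' is uniformly computable in $X$ by the previous paragraph, so $\jjump X$ decides it. Thus $\jjump X$ can search $\pTree$ for the required $\xi$ --- the search terminating because, by Lemma \ref{lem:module-H-is-correct} and Lemma \ref{lem:tree-build:T-is-image}, such a $\xi$ exists --- and then recover its stable output; this yields $\hat\Upsilon$ with $\hat\Upsilon(\jjump X) = \hT$, uniformly in an index for $\Upsilon_2$.

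Since this is a verification rather than a construction, no single step is a serious obstacle. The points that warrant care are: confirming that no module covertly queries more than $X$, i.e.\ that every appeal to $X$ is either an honest bounded computation or a call to $\rho$ --- in particular that the state-sharing between $\module{L}{e}$ and its helpers $\module{L}[n]{e}$ takes place entirely inside the deterministic $X$-computable stage process and so does not lift the complexity of $\tpath$ above $\jjump X$ --- and checking, via Condition \ref{cond:output}, that $\Mout{\xi}$ genuinely stabilizes, so that ``recover its stable output'' is well posed.
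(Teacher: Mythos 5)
Your proof is correct and follows essentially the same route as the paper: $\Upsilon$ is read off the $X$-computable stagewise approximation (with the appeal to $S$ laundered through the $X$-computable $\rho$ of Lemma~\ref{lem:limit-lemma}), and $\hat\Upsilon$ determines $\tpath$ from $\jjump X$ via Condition~\ref{cond:no-reinit} and then searches for the node implementing $\module{H}{\sigma}$. One small imprecision: since $S$ is only a \emph{partial} $\jjump X$-computable tree, you cannot literally ``decide whether $S(\sigma)\conv$'' as a pre-check; the paper instead notes that the search for $\xi$ may simply fail to terminate, which is harmless because that happens exactly when $\hT(\sigma)$ should diverge --- your argument goes through if read this way.
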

\begin{proof}
To compute \( \hT(\sigma) \) from \( \jjump{X} \) we simply (iteratively) identify the leftmost outcome of nodes on \( \pTree \) to identify elements in \( \tpath \)  and search for a node  \( \xi \in \tpath \) and \( \xi^{-} \) implementing \( \module{H}{\sigma} \)  and return \( \Mout{\xi} \).  It's possible that when working to compute \( \hT(\sigma) \) we next discover such a node \( \xi \).  However, this can only happen when \( S \) fails to converge on some  \( \sigma' \subfun \sigma  \) in which case \( \hT(\sigma) \) is properly undefined anyway.   The uniformity can be read off the construction (note the only use of \( S \) is via Lemma \ref{lem:limit-lemma} which is fully uniform).    
\end{proof}

\begin{lemma}\label{lem:tree-build:non-compute}
   If \( f \in [T] \) then \( f \nTleq X \).  That is claim \ref{prop:tree-build:non-compute} of Proposition \ref{prop:tree-build} holds.  
\end{lemma}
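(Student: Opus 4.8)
The plan is to show that $f \neq \recfnl{e}{X}{}$ for every $e$, using an appropriate module of the form $\module{P}{e}$ on the truepath. First I would invoke Lemma~\ref{lem:ht-yields-paths} together with Lemma~\ref{lem:tree-build:T-is-image} to write $f = \hT(g)$ with $g \in [S]$, fix $e$, and suppose for contradiction that $\recfnl{e}{X}{} = f$ (so $\recfnl{e}{X}{}$ is total and its $s$-step approximations are initial segments of $f$). The next step is to locate the relevant node of $\pTree$. Since $g\restr{k} \in S$ for every $k$, Lemma~\ref{lem:module-H-is-correct} gives each module $\module{H}{g\restr{k}}$ on the truepath an outcome guessing $g\restr{k} \in S$, while every module other than one of the form $\module{H}{\sigma}$ already has a well-defined true outcome (as in the proof of Lemma~\ref{lem:tree-build:T-is-image}); hence the branch of $\tpath$ that follows $g$ is infinite. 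Reading the module assignment of Definition~\ref{def:modules-on-ptree} down this branch, the successive modules of the form $\module{P}{j}$ carry the indices $j = 0, 1, 2, \ldots$ in order, and the one with index $e$ is a node $\xi$ such that $\decode{\xi}{1} = g\restr{e+1}$; since the splitting module immediately preceding $\xi$ peels off the branch that $\hT(g\restr{e+1})$ will extend, we get $\Mout{\xi} \subfun \hT(g\restr{e+1}) \subfun f$.

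Then I would read off the true outcome of $\xi$. Above $\hT(g\restr{e+1})$, which extends $\Mout{\xi}$, the $\omega$-branching created by $\module{H}[+]{g\restr{e+1}}$ places infinitely many pairwise incompatible strings into $T = \bigcup_s T_s$, all but one of which are incompatible with $f$; so there is $\tau \in T$ with $\Mout{\xi} \subfun \tau$ and $\tau \incompat f$. Since $\recfnl[s]{e}{X}{}$ is an initial segment of $f$ of unbounded length, it eventually disagrees with $\tau$; hence at some stage with $\xi \in \tpath[s]$ the module $\module{P}{e}$ finds an extension of $\Mout{\xi}$ in $T_s$ incompatible with $\recfnl[s]{e}{X}{}$, whereupon it passes to --- and forever keeps --- the outcome $\godelnum{\neq}$, which is therefore its true outcome. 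By construction the successor of $\xi$ on $\tpath$ then has output extending that string, which, being incompatible with $\recfnl[s]{e}{X}{} \subfun f$, is incompatible with $f$; so by Condition~\ref{cond:output} every node of $\tpath$ above $\xi$ --- in particular the node implementing $\module{H}{g\restr{e+1}}$ and its successor defining $\hT(g\restr{e+1})$ --- has output extending a string incompatible with $f$. Thus $\hT(g\restr{e+1})$ is defined and extends a string incompatible with $f$; but $\hT$ is an f-tree (Lemma~\ref{lem:tree-build:w-branching}) and $g\restr{e+1} \subfun g$, so $\hT(g\restr{e+1}) \subfun \hT(g) = f$ --- a contradiction. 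Hence no $\recfnl{e}{X}{}$ equals $f$, i.e.\ $f \nTleq X$, establishing claim~\ref{prop:tree-build:non-compute}.

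The step I expect to be the real obstacle is the first one: pinning down exactly which node of $\pTree$ carries $\module{P}{e}$ along the $g$-following branch of $\tpath$, and checking that its output is a prefix of $f$. This amounts to tracking how the splitting modules $\module{S}[n]{-1}$ route the branch leading to $g$ and how Definition~\ref{def:modules-on-ptree} distributes the indices among the successive modules of the form $\module{P}{j}$; once that plumbing is in place, the diagonalisation itself is the familiar ``witness lying off every competing branch'' argument, transposed from Cantor space to trees of strings.
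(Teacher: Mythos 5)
Your argument is correct and follows the same route as the paper's: both locate a node on the truepath implementing $\module{P}{e}$ whose output is a prefix of $f = \hT(g)$, observe that the $\omega$-branching downstream guarantees incompatible extensions in $T$ so the module must eventually take the $\godelnum{\neq}$ outcome, and then derive a contradiction because that outcome's output is simultaneously a prefix of $f$ (by Condition~\ref{cond:output} and the fact that $\hT(g)$ threads through the truepath) and incompatible with $\recfnl{e}{X}{} = f$. Your version merely spells out the module-assignment plumbing (locating the node at second coordinate $g\restr{e+1}$) that the paper leaves implicit, and that bookkeeping is accurate; the final worry you raise is thus already resolved.
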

\begin{proof}
    Suppose the claim fails and for some \( f \in [T], f = \recfnl{e}{X}{} \) for some total \( \recfnl{e}{X}{} \) .  By Lemma \ref{lem:ht-yields-paths} we have \( f = \hT(g) \).  Thus, for some \( \xi \in \tpath \)  with \( \xi^{-} \)  implementing \( \module{P}{e} \) we have  \( f \supfun \Mout{\xi} \).   As \( f = \hT(g) \) we know that \( \tpath \) extends to a node implementing some \( \module{H}[+]{\sigma} \) and thus \( T \) contains incompatible \( \tau_0, \tau_1 \supfun \Mout{\xi^{-}} \).  Thus, by the operation of \( \module{P}{e} \) we know that \( \Mout{\xi} \incompat \recfnl{e}{X}{}  \) contradicting the supposition.   
\end{proof}

\begin{lemma}\label{lem:tree-build:low2-reqs-work}
Suppose \( \xi \in \tpath, f \in [T], f \supfun \Mout{\xi} \) and \( \xi^{-} \) implements \( \module{L}{e} \) then \( \recfnl{e}{f \Tplus X}{} \) is total iff the true outcome of \( \xi^{-} \) is \( \godelnum{\conv} \)    
\end{lemma}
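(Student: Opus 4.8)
The plan is to read the conclusion off the internal bookkeeping of the module $\module{L}{e}$, tracking exactly when its leftmost outcome $\godelnum{\conv}$ is visited. Write $\zeta = \xi^{-}$ for the node implementing $\module{L}{e}$. By the module description, $\Mout{\xi} = \Mout{\zeta}$ when the true outcome of $\zeta$ is $\godelnum{\conv}$, while $\Mout{\xi} = \delta = \Mout{\upsilon}$ for the unique node $\upsilon$ implementing some $\module{L}[n]{e}$ below $\zeta$ when the true outcome is $\godelnum{\upsilon}$; in either case $\Mout{\xi}$ extends $\Mout{\zeta}$, so every $f \in [T]$ with $f \supfun \Mout{\xi}$ also extends $\Mout{\zeta}$. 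The point I will exploit is that the search $\module{L}{e}$ runs on behalf of an activated submodule terminates precisely when control later returns to the outcome $\godelnum{\conv}$.

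First I would treat the direction where the true outcome of $\zeta$ is \emph{not} $\godelnum{\conv}$, so it equals $\godelnum{\upsilon}$ for some $\upsilon$ implementing $\module{L}[n]{e}$. Since $\godelnum{\conv} = 0$ lies to the left of $\godelnum{\upsilon}$ it is visited only finitely often, so past some stage the internal variables of $\module{L}{e}$ are never reset; then $\delta = \Mout{\upsilon} = \Mout{\xi}$ stays fixed, and at each of the infinitely many later stages visiting $\zeta$ the search for a $\tau \supfun \delta$ with $\tau \in T_s$ and $\recfnl{e}{\tau \Tplus X}{n}\conv$ fails, since a success would reset the variables and revisit $\godelnum{\conv}$. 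As $T = \Union_s T_s$ and convergence from a finite oracle is monotone, no $\tau \in T$ extending $\Mout{\xi}$ has $\recfnl{e}{\tau \Tplus X}{n}\conv$; hence for $f \in [T]$ with $f \supfun \Mout{\xi}$ the computation $\recfnl{e}{f \Tplus X}{n}$ diverges (a convergent one would go through a finite initial segment $f\restr{k} \in T$ of $f$), so $\recfnl{e}{f \Tplus X}{}$ is not total.

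Next I would prove the converse. Assume the true outcome of $\zeta$ is $\godelnum{\conv}$, fix $f \in [T]$ with $f \supfun \Mout{\xi} = \Mout{\zeta}$, and fix $n$. By Lemma \ref{lem:ht-yields-paths} write $f = \hT(g)$ with $g \in [S]$; then $g$ selects a branch of the truepath through $\zeta$ and its conv-successor $\xi$, along which the node-outputs $\Mout{\nu}$ of the visited nodes are nested initial segments of $f$. Every node below $\xi$ extends the conv-outcome of $\zeta$, so by items 7 and 8 of Definition \ref{def:modules-on-ptree} the assignment rules keep scheduling, along the branch selected by $g$, a fresh submodule $\module{L}[m]{e}$ after each of the infinitely many modules $\module{L}{e'}$ ($e' > e$) occurring further down that branch --- one for each $m$ --- so some node $\upsilon$ implementing $\module{L}[n]{e}$ lies on that branch below $\xi$, with $\Mout{\upsilon} \subfun f$. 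When $\upsilon$ is first visited it activates the search of $\module{L}{e}$ with $\delta = \Mout{\upsilon}$; this search must terminate, for otherwise $\godelnum{\upsilon}$, which lies to the right of $\godelnum{\conv}$, would be visited cofinally, contradicting that $\godelnum{\conv}$ is the leftmost outcome of $\zeta$ visited cofinally. So at some stage a $\tau \in T$ with $\tau \supfun \Mout{\upsilon}$ and $\recfnl{e}{\tau \Tplus X}{n}\conv$ is found and $\Mout{\upsilon^{+}} = \tau$ is set for the unique successor $\upsilon^{+}$ of $\upsilon$; since $\upsilon^{+}$ lies on the same branch, $\tau = \Mout{\upsilon^{+}} \subfun f$, whence $\recfnl{e}{f \Tplus X}{n}\conv$. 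As $n$ was arbitrary, $\recfnl{e}{f \Tplus X}{}$ is total.

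I expect the real work to be the bookkeeping in the converse direction: checking that a submodule $\module{L}[n]{e}$ for \emph{every} argument $n$ really is scheduled, infinitely often, along the branch of the truepath selected by $g$ (this uses that the assignment rules cascade down through every index $i < e'$ whose $\module{L}{i}$-conv-outcome the branch lies above, $e$ being one of them), and that the string $\tau$ produced by the successful search, for which $\Mout{\upsilon^{+}} = \tau$, is genuinely an initial segment of the given path $f$ rather than of some other path through $T$. Both points come down to the fact, already isolated in Lemma \ref{lem:ht-yields-paths} together with Conditions \ref{cond:output} and \ref{cond:mod-seperation}, that the truepath controls which paths survive in $T$; the remaining verification is routine.
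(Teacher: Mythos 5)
Your proof is correct and follows essentially the same route as the paper: in the non-$\conv$ case you argue that the search in $\module{L}{e}$ never succeeds for the fixed $\delta$, so $\recfnl{e}{\tau\Tplus X}{n}$ diverges for every $\tau\in T$ extending $\Mout{\xi}$; in the $\conv$ case you locate, for each $n$, a node $\upsilon$ implementing $\module{L}[n]{e}$ on the branch of the truepath selected by $g$ whose activated search must terminate (else its outcome, lying right of $\godelnum{\conv}$, would be visited cofinally), and the resulting $\Mout{\upsilon^{+}}$ is an initial segment of $f$. The only difference is presentational: you spell out the scheduling of the helper modules $\module{L}[n]{e}$ via items 7--8 of Definition \ref{def:modules-on-ptree}, whereas the paper simply asserts the existence of the relevant node $\nu\in\tpath$ with $\Mout{\nu}\subfun f$ and lets the module assignment rules carry the weight implicitly.
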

\begin{proof}
Suppose \( \xi^{-} \)  has true outcome \( \godelnum{\conv} \) but that \( \recfnl{e}{f \Tplus X}{n}\diverge \).  For some  \( \nu \in \tpath, \tau = \Mout{\nu}  \) with \( \nu^{-} \supfun \xi \) implementing \( \module{L}[n]{e} \) we have \( f \supfun \tau \) and by operation of  \( \module{L}{e} \) and \( \module{L}[n]{e} \) we can only have true outcome  \( \godelnum{\conv} \)  if \( \recfnl{e}{\tau \Tplus X}{n}\conv \).  This contradicts our assumption.

On the other hand, if  \( \xi^{-} \)  has true outcome \( \godelnum{\nu} \) then \( \nu \supfun \xi^{-} \) implements some \( \module{L}[n]{e} \).  The operation of \( \module{L}{e} \) guarantees that if we ever saw some \( \tau \supfun \Mout{\xi}, \tau \in T \) with  \( \recfnl{e}{\tau \Tplus X}{n}\conv \) then we wouldn't have true outcome \( \godelnum{\nu} \).  Hence, we must have  \( \recfnl{e}{f \Tplus X}{n}\diverge \).  As the operation of \( \module{L}{e} \) ensures that one of our outcomes is true, this establishes the claim.

\end{proof}

\begin{lemma}\label{lem:tree-build:low2}
   If \( g \in [S] \) then \( g \Tplus \jjump{X} \Tequiv \jjump{\left(\hT(g) \Tplus X\right)} \Tequiv \hT(g) \Tplus   \jjump{X}  \).  That is claim \ref{prop:tree-build:low2} of Proposition \ref{prop:tree-build} holds.  
\end{lemma}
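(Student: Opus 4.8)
The plan is to reduce the lemma to two equivalences and prove each in turn, writing \( f = \hT(g) \) throughout; by Lemma \ref{lem:tree-build:T-is-image} this \( f \) is a genuine total element of \( \baire \). Since \( \Tequiv \) is transitive, it suffices to establish \( f \Tplus \jjump{X} \Tequiv g \Tplus \jjump{X} \) and \( \jjump{(f \Tplus X)} \Tequiv f \Tplus \jjump{X} \). The first is easy in both directions: by Lemma \ref{lem:tree-build:comp} we have \( \hT \Tleq \jjump{X} \), so \( f = \hT(g) \Tleq g \Tplus \jjump{X} \); conversely I would invert the f-tree. Because \( g \in [S] \), every \( \hT(g\restr{n}) \) converges, \( \hT(g\restr{n}) \subfun f \), and \( \lh{\hT(g\restr{n})} \geq n \) by claim \ref{prop:tree-build:len} of Proposition \ref{prop:tree-build}; so to compute \( g\restr{n} \) from \( f \Tplus \jjump{X} \) one dovetails the \( \jjump{X} \)-computations \( \hT(\tau) \) over all \( \tau \) of length \( n \), returning the unique \( \tau \) for which \( \hT(\tau) \) converges to an initial segment of \( f \) — unique because an f-tree preserves incompatibility. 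For the second equivalence one inequality is immediate: \( X \Tleq f \Tplus X \) gives \( \jjump{X} \Tleq \jjump{(f \Tplus X)} \), and \( f \Tleq \jjump{(f \Tplus X)} \) trivially, so \( f \Tplus \jjump{X} \Tleq \jjump{(f \Tplus X)} \).

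The substance is the reduction \( \jjump{(f \Tplus X)} \Tleq f \Tplus \jjump{X} \). I would invoke the familiar fact that, uniformly in \( Z \), \( \jjump{Z} \Tequiv \set{e}{\recfnl{e}{Z}{}\conv} \) — the totality index set, which is \( \pizn{2} \) relative to \( Z \) and complete for that pointclass — so it is enough to compute \( \set{e}{\recfnl{e}{f \Tplus X}{}\conv} \) from \( f \Tplus \jjump{X} \). Here the point is that the construction is computable and, by Condition \ref{cond:no-reinit}, never reinitialises, so \( \xi \in \tpath \iff \existsinf(s)\left(\xi \in \tpath[s]\right) \); hence the relation \( \xi \in \tpath \) is \( \pizn{2} \), and \( \tpath \), the outputs \( \Mout{\xi} \), and the true outcome of any node are all uniformly \( \jjump{X} \)-computable (this is what underlies Lemma \ref{lem:tree-build:comp}). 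Since \( g \in [S] \) the path \( f \) follows the truepath cofinally, and tracing the module assignment of Definition \ref{def:modules-on-ptree} along \( f \) shows that for each \( e \) there is a unique \( \xi \in \tpath \) with \( \xi^{-} \) implementing \( \module{L}{e} \) and \( \Mout{\xi} \subfun f \). Given \( e \), I would use \( f \) together with \( \jjump{X} \) to follow the truepath down to this \( \xi \), compute the true outcome of \( \xi^{-} \), and read off from Lemma \ref{lem:tree-build:low2-reqs-work} — according to whether that outcome is \( \godelnum{\conv} \) — whether \( \recfnl{e}{f \Tplus X}{} \) is total. This yields \( \set{e}{\recfnl{e}{f \Tplus X}{}\conv} \Tleq f \Tplus \jjump{X} \), hence \( \jjump{(f \Tplus X)} \Tleq f \Tplus \jjump{X} \), completing the argument.

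I expect the main obstacle to lie in the bookkeeping of this last reduction: making precise that \( \tpath \) and all the associated data (the outputs \( \Mout{\xi} \), the true outcomes, and the location of the relevant \( \module{L}{e} \)-node) are genuinely and uniformly \( \jjump{X} \)-computable — which rests on \( T \) being computable and on the no-reinitialisation Condition \ref{cond:no-reinit} — and confirming that along \( f \) a node implementing \( \module{L}{e} \) with \( \Mout{\xi} \subfun f \) really does lie on the truepath for every \( e \), which requires following how \( \module{S}[n]{-1} \) splits the fan of extensions of \( \hT(\sigma) \) and how fresh indices are handed to the modules of the form \( \module{L}{e} \) along branches of \( \pTree \).
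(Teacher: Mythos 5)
Your proposal is correct and follows essentially the same route as the paper's proof: establish \( f \Tplus \jjump{X} \Tequiv g \Tplus \jjump{X} \) by inverting the f-tree via Lemma \ref{lem:tree-build:comp}, then reduce \( \jjump{(f\Tplus X)} \Tleq f\Tplus\jjump{X} \) to deciding totality of \( \recfnl{e}{f\Tplus X}{} \) by locating the \( \module{L}{e} \)-node along the truepath and invoking Lemma \ref{lem:tree-build:low2-reqs-work}. Your added detail on recovering \( g \) from \( f \Tplus \jjump{X} \) (dovetailing \( \hT(\tau) \) and using incompatibility preservation) and on \( \jjump{X} \)-computability of \( \tpath \) via Condition \ref{cond:no-reinit} fills in exactly what the paper delegates to Lemmas \ref{lem:tree-build:comp} and \ref{lem:tree-build:w-branching}.
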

Note that, as \( [T] \) is the image of \( [S] \) under \( \hT \) every \( f \in [T] \) has this property relative to some \( g \).
\begin{proof}
Let \( g \in [S] \).  By Lemma \ref{lem:tree-build:homeo}, \( \hT(g) = f \) for some total \( f \).  Since whenever \( \hT(\sigma)\conv \) by Lemma \ref{lem:tree-build:comp} we can find \( \hT(\sigma) \) computably in \( \jjump{X} \) it follows that \( f \Tleq g \Tplus \jjump{X} \).  To see that \( g \Tleq f \Tplus \jjump{X} \) note that, by Lemmas \ref{lem:tree-build:comp} and \ref{lem:tree-build:w-branching} we can inductively recover the unique path \( g \) with  \( \hT(g) = f \) computably in \( f \Tplus \jjump{X} \).   

Clearly  \( \jjump{\left(f \Tplus X\right)} \Tgeq f \Tplus \jjump{X} \).  Thus, to complete the proof, it is sufficient to show that \( \jjump{\left(f \Tplus X\right)} \Tleq g \Tplus \jjump{X}  \).  By a well-known result, it is enough to show that \( g \Tplus \jjump{X} \) can computably decide whether \( e \) is an index for a total computable function in \( f \Tplus X \).  However, by Lemma \ref{lem:tree-build:low2-reqs-work} we can decide this question by searching for \( \xi \in \tpath \) with \( \xi \) implementing \( \module{L}{e} \) and \( \xi = \pair{\alpha}{\sigma} \) with \( \sigma \subfun g \) and determining the outcome of \( \xi \).  By Lemma \ref{lem:tree-build:comp} this can be done computably in  \( g \Tplus \jjump{X} \).

\end{proof}   

\begin{lemma}\label{lem:no-esplit-ext-computable} 
If  \( f \in [T], Y = \recfnl{e}{f \Tplus X}{} \) and there is some \( \tau \subfun f \) with no \( e \)-splitting \( \tau_0, \tau_1 \supfun \tau \) with \( \tau_0, \tau_1 \in T \)  then \( Y \Tleq X \).     
\end{lemma}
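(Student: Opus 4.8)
The plan is to prove this by the standard non-\(e\)-splitting argument from the minimal degree construction, carried out over \(\wstrs\) and relativized to \(X\). Fix \(\tau \subfun f\) as supplied by the hypothesis, so that no pair \(\tau_0, \tau_1 \in T\) with \(\tau_0, \tau_1 \supfun \tau\) forms an \(e\)-splitting (relative to \(X\)). The key observation is that, with \(\tau\) fixed, the value of \(\recfnl{e}{f' \Tplus X}{n}\) cannot depend on which path \(f' \in [T]\) extending \(\tau\) we use, so \(Y\) can be read off from \(X\) together with the (computable) tree \(T\).

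Concretely, I would describe the following \(X\)-computable procedure for computing \(Y(n)\): using \(X\) as oracle and the computability of \(T\), search for a string \(\tau' \in T\) with \(\tau' \supfun \tau\) and \(\recfnl{e}{\tau' \Tplus X}{n}\conv\), and output \(\recfnl{e}{\tau' \Tplus X}{n}\) for the first such \(\tau'\) found. Since \(T\) is computable this search uses only the oracle \(X\), so it remains to check that it halts and that it returns \(Y(n)\). For halting: since \(f \in [T]\) and \(f \supfun \tau\), every initial segment \(f\restr{m}\) with \(m \geq \lh{\tau}\) lies in \(T\) and extends \(\tau\); as \(Y = \recfnl{e}{f \Tplus X}{}\) is total, for all sufficiently large \(m\) we have \(\recfnl{e}{(f\restr{m}) \Tplus X}{n}\conv\), so \(f\restr{m}\) is an eligible witness and the search terminates. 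For correctness: suppose the procedure returns \(v = \recfnl{e}{\tau' \Tplus X}{n}\), and choose \(m \geq \lh{\tau}\) large enough that \(\recfnl{e}{(f\restr{m}) \Tplus X}{n}\conv = Y(n)\). Both \(\tau'\) and \(f\restr{m}\) are in \(T\) and extend \(\tau\); if \(v \neq Y(n)\) then \(\recfnl{e}{\tau' \Tplus X}{}\) and \(\recfnl{e}{(f\restr{m}) \Tplus X}{}\) disagree at \(n\), hence are incompatible, so \(\tau', f\restr{m}\) would be an \(e\)-splitting pair in \(T\) above \(\tau\), contradicting the choice of \(\tau\). Thus \(v = Y(n)\), and as \(n\) was arbitrary we conclude \(Y \Tleq X\), with the reduction obtained uniformly from an index for the functional witnessing \(Y = \recfnl{e}{f \Tplus X}{}\) and an index for \(T\).

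I do not expect a genuine obstacle here; the argument is entirely routine. The only points that need a moment's care are that the hypothesis hands us \(\tau\) as an initial segment of \(f\) (so the relevant segments \(f\restr{m}\) really do extend \(\tau\), and the search need not consult \(f\)), that \(T\) being computable is what makes the search \(X\)-computable rather than \(f\)-computable, and that \(Y\), being a set, is total so the witnessing segments of \(f\) exist.
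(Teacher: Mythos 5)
Your proof is correct and follows essentially the same approach as the paper's: search $X$-computably through $T$ for an extension $\tau' \supfun \tau$ on which the computation converges at $n$, then appeal to the absence of $e$-splittings above $\tau$ for correctness and to totality of $Y$ plus $f \in [T]$ for termination. You spell out the termination and correctness checks in more detail, but the argument is the same.
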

As remarked above, we mean \( e \)-splitting relativized to \( X \).  
\begin{proof}
We can compute \( Y\restr{n} \) from \( X \) by returning  \( \recfnl{e}{\tau' \Tplus X}{}\restr{n} \) for the first \( \tau' \supfun \tau \) in \( T \) we can find for which this is a string of length \( n \).  As \( Y = \recfnl{e}{f \Tplus X}{}  \) and \( f \supfun \tau , f \in [T] \) there must be some such \( \tau' \) and by the lack of \( e \)-splitting extensions there is no possibility of an incompatible value. 
\end{proof}

\begin{lemma}\label{lem:tree-build:min}
   If \( f \in [T] \) and \( Y \Tleq f \Tplus X  \) then either \( Y \Tleq X \) or \( f \Tleq Y \Tplus \jjump{X} \).  That is claim \ref{prop:tree-build:min} of Proposition \ref{prop:tree-build} holds.  
\end{lemma}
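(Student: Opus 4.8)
The plan is to fix an index $e$ with $Y = \recfnl{e}{f \Tplus X}{}$ (possible since $Y \Tleq f \Tplus X$) and to split according to whether $e$-splittings relative to $X$ are dense above $f$ in $T$. If there is some $\tau \subfun f$ with $\tau \in T$ admitting no pair of $e$-splitting extensions in $T$, then Lemma \ref{lem:no-esplit-ext-computable} already gives $Y \Tleq X$ and we are done. So the real case is when $e$-splittings are dense above $f$, and there we must produce a reduction computing $f$ from $Y \Tplus \jjump{X}$.

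In that case I would first invoke Lemma \ref{lem:ht-yields-paths} to write $f = \hT(g)$ with $g \in [S]$, and recall $\hT \Tleq \jjump{X}$ from Lemma \ref{lem:tree-build:comp}; it then suffices to compute $g$ from $Y \Tplus \jjump{X}$, since then $f = \hT(g) \Tleq g \Tplus \jjump{X} \Tleq Y \Tplus \jjump{X}$. The computation climbs the f-tree $\hT$ one level at a time. As it is enough to exhibit one reduction for this particular $f$, I would build into it the finite string $g\restr{e}$ (equivalently $\hT(g\restr{e}) \subfun f$). Then, given $\sigma = g\restr{k}$ with $k \geq e$ already in hand — so $\hT(\sigma) \subfun f$ — I would determine $g(k)$ as the unique $m$ such that $\hT(\sigma\concat[m])$ is defined, $\recfnl{e}{\hT(\sigma\concat[m]) \Tplus X}{} \compat Y$, and $T$ contains an $e$-splitting pair both extending $\hT(\sigma\concat[m])$. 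Each of these is semi-decidable in $Y \Tplus \jjump{X}$ — using $\jjump{X}$ to compute $\hT(\sigma\concat[m])$, using $X \Tleq \jjump{X}$ to evaluate $\recfnl{e}{\cdot \Tplus X}{}$ and to look for an $e$-splitting pair, and $Y$ to test compatibility — so dovetailing over $m$ and returning the first $m$ all of whose conditions are confirmed is a $Y \Tplus \jjump{X}$-computation.

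It remains to check that this search returns $n := g(k)$. For $n$ itself all three conditions hold: $\hT(\sigma\concat[n]) = \hT(g\restr{k+1}) \subfun f$, so $\recfnl{e}{\hT(\sigma\concat[n]) \Tplus X}{} \subfun Y$, and density of $e$-splittings above $f$ supplies an $e$-splitting pair above $\hT(\sigma\concat[n])$. Conversely, for $m \neq n$: since $\lh{\sigma} \geq e$ the module $\module{S}[n]{e}$ acts at the $\hT$-branching above $\hT(\sigma)$ and forces either that $\subtree{T}{\hT(\sigma\concat[n])}$ is totally non-$e$-splitting — impossible here, as $e$-splittings are dense above $f \supfun \hT(\sigma\concat[n])$ — or that $\hT-(\sigma\concat[n])$ $e$-splits with $\hT-(\sigma\concat[m'])$ for every $m' > n$; hence for $m > n$ the string $\recfnl{e}{\hT(\sigma\concat[m]) \Tplus X}{}$ extends $\recfnl{e}{\hT-(\sigma\concat[m]) \Tplus X}{}$, which is incompatible with $\recfnl{e}{\hT-(\sigma\concat[n]) \Tplus X}{} \subfun Y$, so the compatibility test fails detectably. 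For $m < n$, the module $\module{S}[m]{e}$ forces either that $\subtree{T}{\hT(\sigma\concat[m])}$ is totally non-$e$-splitting — so the $e$-splitting test is never confirmed — or that $\hT-(\sigma\concat[m])$ $e$-splits with $\hT-(\sigma\concat[n])$, again making $\recfnl{e}{\hT(\sigma\concat[m]) \Tplus X}{} \incompat Y$. So every $m \neq n$ is either rejected by the compatibility test or never confirmed, and $n$ is the only value the search can output; this finishes the induction and hence the proof.

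The main obstacle I expect is precisely this last step: controlling the infinitely many "wrong" children $\hT(\sigma\concat[m])$, and in particular ruling out those $m < n$ whose subtree is totally non-$e$-splitting, which survive the compatibility test and can only be excluded because the existence-of-an-$e$-splitting-pair test never confirms for them while it does confirm, in finitely many steps, for $n$. This is also where it is essential to use the full $\jjump{X} = X''$ rather than a single jump — both to compute $\hT$ (Lemma \ref{lem:tree-build:comp}) and because single jump inversion provably cannot work (Proposition \ref{prop:no-do-one-jump}) — and where one relies on the $\module{S}$-modules enforcing the splitting/non-splitting dichotomy at \emph{every} $\hT$-branching of depth at least $e$, not merely the one literally indexed by $e$.
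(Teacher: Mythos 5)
Your overall structure is sound and parallels the paper's: the first case is handled by Lemma \ref{lem:no-esplit-ext-computable} exactly as you say, and in the remaining case the right move is indeed to compute $g$ from $Y \Tplus \jjump{X}$ by climbing $\hT$, invoking Lemmas \ref{lem:ht-yields-paths}, \ref{lem:tree-build:comp} and \ref{lem:tree-build:low2}. Your search criterion is also a workable alternative to the paper's algorithm, which instead queries $\jjump{X}$ for the true outcome $r(\sigma,n')$ of the module $\module{S}[n']{e}$ and cases on it directly.

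However, there is a genuine gap in your verification that the search returns exactly $n = g(k)$. You treat the $\module{S}$-module as enforcing a \emph{dichotomy} --- either $\subtree{T}{\hT(\sigma\concat[m])}$ is totally non-$e$-splitting, or $\hT-(\sigma\concat[m])$ $e$-splits with $\hT-(\sigma\concat[m'])$ for every $m' > m$. But the module actually has a \emph{trichotomy} of true outcomes (see the construction of $\module{S}[n]{e}$ in Section \ref{ssec:module-S-n-e}): $\godelnum{\compat}$, the $\godelnum{\incompat}$ outcomes, and $\godelnum{(\diverge,\hat{n},\hat{m})}$. In the divergent outcome the module does \emph{not} secure either of your two disjuncts; instead it arranges that none of the later branches $\Mext[k]{\cdot}$, $k>0$, has an extension in $T$ that $e$-splits with either of the witnesses $\tau_0,\tau_1$, forcing $\recfnl{e}{\cdot \Tplus X}{}$ to be partial along any path through them. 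This is why the paper explicitly handles the divergent case as a third branch of its case analysis.

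Concretely, this costs you in two places. First, when $\module{S}[n]{e}$ itself has the divergent outcome (which is perfectly consistent with $\recfnl{e}{f \Tplus X}{}$ being total, since $f$ rides the \emph{leftmost} branch $\Mext[0]{\cdot} \supfun \tau_1$), your argument that every $m > n$ fails the compatibility test (b) is wrong: the functional images of the $m>n$ branches are then all short prefixes of the common part of $\recfnl{e}{\tau_0 \Tplus X}{}$ and $\recfnl{e}{\tau_1 \Tplus X}{}$, hence automatically compatible with $Y$. What actually rejects such $m$ is that condition (c) never confirms --- no $e$-splitting pair exists above $\hT(\sigma\concat[m])$ precisely because all functional images there are pairwise compatible --- but that is a different argument than the one you give. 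Second, for $m < n$, you must explain why $\module{S}[m]{e}$ cannot have the divergent outcome; this follows because $f$ would then extend a non-leftmost branch through which $\recfnl{e}{\cdot \Tplus X}{}$ is forced partial, contradicting totality of $Y$, but this step is missing from your write-up. Once these two points are patched your verification is correct; in effect you are rediscovering, on the fly, the case distinction that the paper's algorithm reads off directly by computing $r(\sigma,n')$ from $\jjump{X}$.
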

\begin{proof}
Suppose that \( Y = \recfnl{e}{f \Tplus X}{} \) and, by Lemma \ref{lem:tree-build:homeo},  that \( f = \hT(g) \).    Let \( r(\sigma, n) \) where \( \lh{\sigma} > e \)  be the outcome of the module \( \module{S}[n]{e} \) assigned to some \( (\alpha, \sigma) \in \tpath \) and \( m(\sigma, n) = \xi \in \tpath \) where  \( \xi^{-} = (\alpha, \sigma) \).   Note that, for all \( n, \sigma \in \dom \hT \) with \( \lh{\sigma} > e \),  \( \hT(\sigma\concat[n]) \supfun \Mext[0]{m(\sigma, n)}  \).  This is because all modules of the form  \( \module{S}[n]{e}, e \geq 0 \) always output a leftmost branch extending the leftmost branch they receive as input.

If there is some \( \tau \subfun f \) not extended by any \( e \)-splitting extensions then by Lemma \ref{lem:no-esplit-ext-computable} we are done.  So we may suppose this isn't the case and show that this implies \( g \Tleq Y \Tplus \jjump{X} \) which, by Lemma \ref{lem:tree-build:low2} is equivalent to showing \( f \Tleq Y \Tplus \jjump{X}  \).

Suppose we know \( \sigma, n' \) and that \( \sigma\concat[n] \subfun g \) for some \( n \geq n' \)  (where \( \lh{\sigma} > e \)) we demonstrate how to check if \( n = n' \) or \( n > n' \) computably in  \( Y \Tplus \jjump{X} \).    This will suffice since, applying this repeatedly, we can compute \( g \) from   \( Y \Tplus \jjump{X} \).  

Using \( \jjump{X} \) we can compute \( \xi = m(\sigma, n') \) and \( r(\sigma, n') \).  If \( n = n' \) then we would have \( f \supfun \Mext[0]{\xi} \) while if \( n > n' \) then \( f \supfun \Mext[k]{\xi}, k > 0 \).

If we have \( r(\sigma, n') = \godelnum{\compat} \) then that entails \( \Mext[0]{\xi^{-}}  \)  has no \( e \)-splitting extensions.  Thus, by our supposition \( f \) can't extend \( \Mext[0]{\xi^{-}}  \), so we can conclude \( n \neq n' \).  

Suppose instead that \( r(\sigma, n') = \godelnum{(\diverge, \hat{n},\hat{m})} \).  In this case, the module  \( \module{S}[n']{e} \) at \( \xi^{-} \) must have identified some \( e \)-splitting   \( \tau_0, \tau_1 \) of \( \Mext[0]{\xi^{-}} \) and that no \( \Mext[k]{\xi}, k > 0 \) has an extension in \( T \) which \( e \)-splits with either \( \tau_0 \) or \( \tau_1 \).  However, as \( \recfnl{e}{f \Tplus X}{} \) is total, if \( f \) doesn't extend either \( \tau_0 \) or \( \tau_1 \) then some initial segment of \( f \) must \( e \)-split with either \( \tau_0 \) or \( \tau_1 \) (\( \recfnl{e}{f \Tplus X}{} \) can't agree with incompatible strings).  Hence, we can't have \( f \supfun \Mext[k]{\xi}, k > 0 \) so we must have \( n = n' \).        

This leaves only the case in which \( r(\sigma, n') \) gives one of the incompatible (i.e. \( e \)-splitting) outcomes.  In this case, we simply test if \( Y \) is compatible with \( \recfnl{e}{\Mext[0]{\xi} \Tplus X}{} \).  If so, then \( n = n' \).  If not, then \( n > n' \).

\end{proof}

These lemmas, taken together, verify all parts of Proposition \ref{prop:tree-build} except \ref{prop:tree-build:len} which is evident from the construction.

\appendix
\section{Minimality Requires Double Jump}\label{app:min-double-jump}

Here we present the promised result about the need for double, rather than single, jump inversion from Section \ref{sec:min-and-double-jump-inversion}

\begin{prop:no-do-one-jump}
Given a perfect weakly \( \omega \)-branching pruned f-tree \( T \Tleq \zeroj  \) one can uniformly construct a computable functional  \( \recfnl{}{}{} \) such that \( e \)-splitting pairs in \( T \)  occur above every node in \( T \) and for every \( \tau \in \rng T \)  there   are paths \( f \neq g \) extending \( \tau \)  through \( T \) with \( \recfnl{e}{f}{} = \recfnl{e}{g}{} \).    
\end{prop:no-do-one-jump}

Indeed, as remarked in that section, we actually prove a slightly stronger result and show that we even if \( T \) is unpruned we can start building such paths \( f, g \) above any node \( \tau \in T \) and maintain agreement under \( \recfnl{e}{}{} \) with the only potential for failure being the possibility of hitting a terminal node in \( T \).     

\begin{proof}
The basic idea of this proof is to use the fact that \( T(\sigma) \) has infinitely many immediate extensions on \( T \) to define a limiting behaviour for \( \recfnl{e}{\tau}{} \) for \( \tau \supfun T(\sigma\concat[m]) \) for sufficiently large \( m \).   

Let \( T_{s} \) be a stagewise approximation to \( T \) that's correct in the limit and which doesn't converge on elements outside the domain.  WLOG we may assume that if \( T_s(\sigma) = \tau \) then \( \godelnum{\sigma}, \godelnum{\tau} < s \) and \( T_s(\sigma')\conv \) for all \( \sigma' \subfun \sigma \).  We  say that  \( T(\sigma) \) was defined at \( s' \) (relative to \( s \)) if \( s' = \mu{t}{\forall(t' \in [t, s]) \left(T_{t'}(\sigma)=T_s(\sigma)\right)} \) and that \( \sigma \) is senior to \( \sigma' \)  at stage \( s \) if  \( T(\sigma) \) was defined at an earlier stage than \( T(\sigma') \) or the same stage and \( \sigma \leftof \sigma' \).  We'll also talk about about \( T(\sigma) \) being senior to \( T(\sigma') \) when \( \sigma \) is senior to \( \sigma' \)  .   

We first ensure that every non-terminal \( T(\sigma) = \tau \) is extended by an e-splitting.  The idea here is that 
if \( \tau' \supfun \tau \) we define \( \recfnl{e}{\tau'}{} \) to extend \( \recfnl{e}{\tau}{} \) with a guess at the first prior stage that some extension of \( \tau \) is permanently seen to be \( T( \sigma\concat[m]) \)  for some \( m \). Eventually, this guess stabilizes and will differ from the guess that was made along the most senior extension of \( \tau \).   

Given \( T_s(\sigma) = \tau \),  define \( q_s(\tau) = t  \) where \( \sigma\concat[m] \) is the most senior immediate extension of \( \sigma \) relative to \( s \) and \( t \) the stage it was defined  or \( 0 \) if no such \( t \) exists.  We let \( q(\tau) = \lim_{s \to \infty} q_s(\tau) \).    If \( s = \godelnum{\tau} \) we define \( \recfnl{e}{\tau}{} = \recfnl{e}{\tau^{-}}{}\concat[q_s(\tau^{-})]\concat l_s(\tau^{-}) \) where \( l_s(\tau) \) is a computable function that will be defined for the second part of the proof. 

Note that, regardless of the behaviour of \( l_s(\tau) \) this guarantees that every non-terminal \( T(\sigma) \) is extended by an e-splitting.  Given \( \sigma \) with \( T(\sigma) = \tau^{-} \) and \( \sigma\concat[m], t \) witnessing that \( q(\tau^{-}) = t  \) we must have \( t > \godelnum{\sigma\concat[m]} \).  Thus, if  \( \tau \subfun  T(\sigma\concat[m] ) \) we have that \( \recfnl{e}{\tau}{} \supfun \recfnl{e}{\tau^{-}}{}\concat[q_s(\tau^{-})] \) for \( s = \godelnum{\sigma\concat[m]} \) and thus \( q_s(\tau^{-}) <  q(\tau^{-}) \).  As \( T \) is weakly \( \omega \)-branching for some sufficiently large \( m' > m  \) we have \( T(\sigma\concat[m']) = \tau' \)  and \( \recfnl{e}{\tau'}{} \supfun \recfnl{e}{\tau^{-}}{}\concat[q(\tau^{-})] \).    

We now abstract away from the construction we just performed by noting that we can effectively define a subtree of \( T \) consisting of the restriction of \( T \) to those nodes of the form  \( \sigma\concat[m] \) with \( \godelnum{\sigma\concat[m']} \geq q(T(\sigma)) \).  On this subtree \( q_s(\tau^{-}) \) has achieved it's limit anytime it is used to define \( \recfnl{e}{\tau}{}  \).  For the remainder of this proof we therefore work on this subtree and (by redefinition) assume that \( T_s(\sigma) = \tau \) doesn't permanently settle on a value until \( q_s(\tau^{-}) \) reaches it's limit.  This allows us to use  \( \recfnl{e'}{\tau^{-}}{}  \) to refer to the value \(  \recfnl{e}{\tau^{-}}{}\concat[q(\tau)] \) and assume that it's correct at any stage at which \( \tau^{-} \) has permanently entered \( \rng T \).  We can also assume that if \( T_s(\sigma)\conv \neq T_{s+1}(\sigma) \) then  \( T_{s+1}(\sigma)\diverge  \) and  \( T_{s}(\sigma) \) isn't extended by any element in \(  \rng T_{s+1} \).  

The idea now is still to pick a limiting behaviour of \( \recfnl{e'}{\tau'}{} \) on extensions \( \tau' \supfun T(\sigma\concat[m]) \) for large enough values of \( m \).  If we've defined some initial segment \( f^{n+1}_\sigma \) of some path through \( T \) whose image under \( \recfnl{e'}{}{} \) extends that of \( g^{n}_\sigma \) we can control the limiting behaviour of extensions of \( g^{n}_\sigma \) to ensure that we can eventually find some extension \( g^{n+1}_\sigma \) whose image under \( \recfnl{e'}{}{} \) extends that of \( f^{n+1}_\sigma \).  By iterating this, we can build \( f_\sigma, g_\sigma \) whose images under \( \recfnl{e'}{}{} \) agree.  

We now define (stagewise approximations to) \( f^{n}_\sigma \subfunneq f^{n+1}_\sigma ,  g^{n}_\sigma \subfunneq f^{n+1}_\sigma   \) and specify a computation for \( l_s \)  to ensure that \( f_\sigma = \Union_{n \in \omega} f^{n}_\sigma , g_\sigma = \Union_{n \in \omega} g^{n}_\sigma \)  are paths whose images agree.  Given any \( \sigma \in \dom T \) with \( T(\sigma) \) not extended by any \( f_\nu, g_\nu \), we'll define \( f^{0}_\sigma, g^{0}_\sigma \supfun T(\sigma) \)  with common initial segment \( T(\sigma) \).    When \( T_s(\sigma)\conv \neq T_{s+1}(\sigma) \) we stipulate that any \( \tau' = T_s(\sigma')  \) for \( \sigma' \)  less senior than \( \sigma \) or \( \sigma' = \sigma \) are injured.  If \( \tau \) is injured then we set any \( f^{n}_\sigma, g^{n}_\sigma  \supfuneq \tau  \) to be undefined and reset \( l_{s+1}(\tau) = \estr \).  Whenever \( f^{n}_\sigma \) (\( g^{n}_\sigma \)) is set to be undefined we also injure \( g^{n'}_\sigma  \) and \(  f^{n'}_\sigma  \) for  \(   n' \geq n \) ( \( f^{n'}_\sigma  \) and \(  g^{n'}_\sigma  \) for \( n' > n \), note the strict inequality here).  If we undefine  \( f^{0}_\sigma \) at stage \( s+1 \)  then we also injure \( T_s(\sigma) \) (the plus one ensuring that we injure the node \( f^{0}_\sigma \) is extending).   Finally, we also injure \( \tau \) immediately before defining \( f^{n}_\sigma \) or \( g^n_\sigma \) to be equal to \( \tau \).

At stage \( s \) we start with \( T_{s}(\estr) \) and act on nodes in \( \rng T_{s} \) in order of seniority  (which respects \( \subfun \)).    Suppose that at stage \( s \) we're taking action on \( \tau = T_{s}(\sigma)  \) after having dealt with all more senior and try the following cases in order.  Note that, at any time there will only be finitely many \( (n, \nu) \) with either \( f^{n}_\nu  \) or \( g^{n}_\nu \) defined so we can computably check whether a case is satisfied for some \( (n, \nu) \).  

\begin{pfcases*}
    \case[\( f^{n}_\nu = \tau \land f^{n+1}_\nu\diverge \) ]  If \( g^{n}_\nu \) is undefined then finish acting for \( \tau \) and all extensions of \( \tau \).  Otherwise, ensure  \( \recfnl{e'}{\tau}{}\concat l_{s+1}(\tau)  \supfun \recfnl{e'}{g^{n}_\nu}{} \).  If this is already satisfied by \( l_s(\tau) \) then let \( l_{s+1}(\tau) = l_s(\tau) \) otherwise pick the \( \subfun \) least solution.   We'll inductively ensure that \( \recfnl{e'}{g^{n}_\nu}{}  \supfun \recfnl{e'}{f^{n}_\nu}{}  \). 

    Now check if there is any \( \sigma' =  \sigma\concat[m] \) such that \( T_{s}(\sigma') = \tau' \) and \( \recfnl{e'}{\tau'}{} \supfun \recfnl{e'}{g^{n}_k}{}  \).  If so, take the most senior such \( \sigma' \) and define \( f^{n+1}_\nu = \tau' \) and continue to the next case.  If not, finish processing \( \tau \) for this stage.       

     \case[\( g^{n}_\nu = \tau \land g^{n+1}_\nu\diverge \) ] As above, using \( g^{n}_\nu, g^{n+1}_\nu \) in place of \( f^{n}_\nu, f^{n+1}_\nu \) and \( f^{n+1}_\nu \) in place of \( g^{n}_\nu \) (note the off by \( 1 \) difference).

     \case[\(  f^{0}_\sigma\conv \land g^{0}_\sigma\diverge \) ] We've already done all the hard work when defining \( f^{0}_\sigma \) so we look for the most senior \(\tau' = T_{s}(\sigma\concat[m]) \)  with \( \recfnl{e'}{f^{0}_\sigma}{} \subfuneq \recfnl{e'}{\tau'}{} \) and define \( g^{0}_\sigma = \tau' \).  If found, we move on to the next case. If not, finish processing \( \tau \) for this stage.

     \case[\( \forall(\nu,n)\left(\tau \neq f^{n}_\nu, g^{n}_\nu\right) \land f^{0}_\sigma\diverge \)]  Let \( \tau' =  T_{s}(\sigma\concat[m]) \) with \( \sigma\concat[m]\)  be the most senior extension of \( \sigma \) (if any).  If not found then finish processing \( \tau \) for this stage.  If found, set \( f^{0}_k = \tau' \) and set \( l_{s+1}(\tau)  \) to be the \( \subfun \) minimal value such that  \( \recfnl{e'}{\tau}{}\concat  l_{s+1}(\tau)  =  \recfnl{e'}{\tau'}{}  \).       

\end{pfcases*}

It is straightforward, to verify, that each node in \( \rng T \)  is only injured finitely many times and that each  \( f^{n}_\nu, g^{n}_\nu \), is only undefined/redefined finitely many times.  To see this, note first that once all more senior elements in \( \rng T \) have entered \( \rng T \) permanently the only way \( \tau \) is injured is if we define \( f^{n}_\nu, g^{n}_\nu \) to equal \( \tau \) or \( \tau = T_s(\nu) \) and \( f^0_\nu \) is injured.  It is clear from the construction that for at most one pair \( (n,\nu) \) and either \( f \) or \( g \) do we set \( f^{n}_\nu, g^{n}_\nu \) to equal \( \tau \) and that only if \( \tau \) goes unused in this way do we extend it by \( f^{0}_\nu \).  Thus, it is enough to show that each \( f^{n}_\nu, g^{n}_\nu \) eventually settles down.  To see this, note that if undefining \( h^{n}_\nu \) could cause \( \hat{h}^{n}_\nu \) to become undefined then \( h^{n}_\nu \) took the more senior extension of their common initial segment.  

It is also evident from the construction that the images of \( f_\nu \) and \( g_\nu \) under \( e' \) agree.  Specifically, that, when defined  \(\recfnl{e}{g^{n}_\nu}{} \supfun \recfnl{e}{f^{n}_\nu}{} \) and \(\recfnl{e}{f^{n+1}_\nu}{} \supfun \recfnl{e}{g^{n}_\nu}{} \).

We now show that for any \( \sigma  \in \dom T \) if \( \tau =  T(\sigma) \) isn't terminal than it is extended.  We first deal with the case where \( \tau \) isn't equal to any \( f^n_\nu \) or \( g^n_\nu \).  In this case, we clearly eventually define \( f^0_\sigma = T(\sigma\concat[m]) \) for some \( m \) and then for large enough \( m', t \) we will have \( \tau' =  T_{t}(\sigma\concat[m']) \) then \( \recfnl{e'}{\tau'}{} \supfun \recfnl{e'}{f^{0}_\sigma}{}  \) thus ensuring that we define \( g^0_\sigma \).

If \( \tau = f^{n+1}_\nu = T_{s+1}(\sigma) \) the agreement remarked on above ensures we eventually get a chance to define \( l_{s+1}(\tau)  \) so that \( \recfnl{e'}{f^{n}_\nu}{}\concat l_{s+1}(\tau) \supfun \recfnl{e'}{g^{n+1}_\nu}{}  \) after which \( \tau \)  is never injured.  Thus, for some sufficiently large \( m,t \) we have that if \( \tau' =  T_{t}(\sigma\concat[m]) \) then \( \recfnl{e'}{\tau'}{} \supfun \recfnl{e'}{g^{n}_\nu}{}  \) and we eventually define \( f^{n+2}_\nu \) to be equal to such a \( \tau' \).  The same argument applies, with the obvious adjustments to the indexes, when \( \tau = g^{n}_\nu \). 
 
Thus, if our tree is pruned then, as it is weakly, \( \omega \) branching for any \( \sigma \in \dom T \) there is some \( m \) with \( \sigma'=\sigma\concat[m] \) such that  \( f_{\sigma'}, g_{\sigma'} \) are defined and extend \( T(\sigma') \) with equal images under \( \recfnl{e}{}{} \).  The moreover claim is straightforward as well.      
\end{proof}

With slightly more care, we could ensure that every node \( \tau \in \rng T \) was extended by infinitely many paths \( f_{k, \tau}, g_{k, \tau} \).  We are unsure if this argument can be improved to construct a single total computable functional which witnesses all  \( \zeroj \) computable f-trees fail, in a strong sense, to have the properties necessary to help in a minimality style argument.

\section{Ordinal Notation Technicalities}

\label{app:ordnottech}

If we wish to build a sequence of trees \( T_n \) all homeomorphic to \( T_\omega \) ensuring the homeomorphism at limit levels only requires that we ensure \( T_n\restr{n} = T_\omega\restr{n} \) and that our homeomorphism from \( T_{n+1} \) to \( T_n \) is the identity on \( T_n\restr{n}  \).  In this appendix, we show that this idea can be extended to arbitrary ordinal notations.   The potential difficulty here is that we attempt to demand that  \( T_{\beta_n}\restr{n} \) should equal \( T_\lambda\restr{n} \) and that for some \(  \gamma \) with \( \beta_n \Oless \gamma \Oless \lambda \) we also demand \( T_\gamma\restr{m} = T_{\lambda'} \) for some \( \lambda' \Ogtr \lambda, m > n \).  

From here on out, we assume that we are working below some limit notation \( \alpha \) (e.g. \( T_\alpha \) is the top tree) and  show that for all \( \beta \Oleq \alpha \) we can effectively define \( \copyord{\beta} \) and \( \copylen{\beta} \).  The idea is that \( T_\beta \) will copy strings of length  \( \copylen{\beta} \) from the tree \( T_{\copyord{\beta}} \).   Note that, \( \copyord{\beta} \) and \( \copylen{\beta} \) will depend on both \( \alpha \) and \( \beta \).  We start by making the following definition.

\begin{definition}\label{def:minimal-e-path}
An \( \kleeneO\)-path   is a non-empty string \( \delta \in \wstrs  \) that satisfies the following for all  \( n + 1 \in \dom \delta, \kappa \in \kleeneO\) and some \( \beta \in \kleeneO\) 
\begin{enumerate}
    \item \( \delta(\lh{\delta} - 1) = \beta \)  
    \item If \( \delta(n) = \kappa + 1 \) then \( \delta(n+1) = \kappa \).
    \item\label{def:e-path:limit} If \( \delta(n) =  \lambda  \) and the notation \( \lambda \) isn't a successor  then \( \delta(n+1)  = \gamma \in \rng \Ofunc{\lambda}  \)
\end{enumerate}   
We call such an \( \kleeneO\)-path an \( \kleeneO\)-path from \( \delta(0) \) to \( \beta \).   The \( \kleeneO\)-path is minimal if the \( \gamma \) in \ref{def:e-path:limit} is required to be the minimal element in \( \rng \Ofunc{\lambda}   \) with \( \beta \Oleq \gamma \).   

\end{definition}

\begin{lemma}\label{lem:uniq-minimal-o-path}
If \( \beta \Oleq \alpha \) then there is a unique minimal \( \kleeneO\)-path from \( \alpha \) to \( \beta \).  Moreover, if  \( \delta \) is a minimal \( \kleeneO\)-path from \( \alpha \) to \( \beta \) then \( \delta\restr{n}, n >0 \) is the minimal \( \kleeneO\)-path from \( \alpha \) to \( \delta(n -1) \).   
\end{lemma}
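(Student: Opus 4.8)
The plan is to get existence from an explicit top-down construction of the path, and then derive uniqueness and the ``moreover'' clause from a short induction whose only delicate point concerns the minimality clause. For existence, I would build a finite sequence $\kappa_0 = \alpha, \kappa_1, \dots$ by recursion: having reached $\kappa_j$ with $\beta \Oleq \kappa_j$, halt if $\kappa_j = \beta$; if $\kappa_j = \kappa+1$ put $\kappa_{j+1} = \kappa$; and if $\kappa_j$ is a limit notation put $\kappa_{j+1}$ equal to the $\Oless$-least element of $\rng\Ofunc{\kappa_j}$ lying $\Oleq$-above $\beta$. Since every notation is $0$, a successor, or a limit notation, and $\kappa_j = 0$ together with $\beta \Oleq \kappa_j$ forces $\kappa_j = \beta$, no other case arises. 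I would then check two invariants: that $\beta \Oleq \kappa_j$ persists — at successors because $\beta \Oless \kappa+1$ yields $\beta \Oleq \kappa$, at limits by the choice itself — which also guarantees the required least element exists, since $\beta \Oless \kappa_j$ for a limit $\kappa_j$ gives $\beta \Oleq \Ofunc{\kappa_j}(n)$ for some $n$ and $\rng\Ofunc{\kappa_j}$ is a $\Oless$-chain; and that $\Oabs{\kappa_{j+1}} < \Oabs{\kappa_j}$ always (successors drop the height, and $\Ofunc{\lambda}(n) \Oless \lambda$), so the recursion is finite and can only terminate at $\beta$. The resulting string is by construction a minimal $\kleeneO$-path from $\alpha$ to $\beta$.

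Two observations feed the rest. First, for \emph{any} $\kleeneO$-path $\delta$ the defining clauses force $\delta(n+1) \Oless \delta(n)$ for every $n$, so $\Oabs{\delta(n)}$ strictly decreases and hence $\beta$ occurs only at the final position: $\delta(n) \ne \beta$ for $n < \lh{\delta}-1$, and moreover $\delta(n') \Oleq \delta(n)$ whenever $n \le n'$. Second, exactly the induction used above shows that a minimal path $\delta$ from $\alpha$ to $\beta$ satisfies $\beta \Oleq \delta(n)$ for all $n$. For uniqueness, given minimal paths $\delta, \delta'$ from $\alpha$ to $\beta$, I would prove $\delta\restr{k+1} = \delta'\restr{k+1}$ by induction on $k$: the base case is $\delta(0) = \alpha = \delta'(0)$, and for the step, writing $\kappa = \delta(k) = \delta'(k)$, either $\kappa = \beta$ and both paths stop at position $k$ by the first observation, or $\kappa \ne \beta$ and $\delta(k+1) = \delta'(k+1)$ because the next value is forced — it is $\kappa'$ when $\kappa = \kappa'+1$, and it is the $\Oless$-least $\gamma \in \rng\Ofunc{\kappa}$ with $\beta \Oleq \gamma$ when $\kappa$ is a limit, a description independent of which path we are on since both terminate at the same $\beta$ (the case $\kappa = 0$ is excluded by $\beta \Oleq \kappa$).

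For the ``moreover'' clause, fix a minimal path $\delta$ from $\alpha$ to $\beta$ and $0 < n \le \lh{\delta}$. That $\delta\restr{n}$ is an $\kleeneO$-path from $\alpha$ to $\delta(n-1)$ is immediate, since clauses (1)--(3) are inherited. The substantive point — which I expect to be the main obstacle — is that $\delta\restr{n}$ also satisfies \emph{minimality}, but with respect to its own terminal value $\delta(n-1)$ rather than $\beta$. So suppose $m+1 \le n-1$ and $\lambda \eqdef \delta(m)$ is a limit notation; I must show $\delta(m+1)$ is the $\Oless$-least $\gamma \in \rng\Ofunc{\lambda}$ with $\delta(n-1) \Oleq \gamma$. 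Here one shows the two sets $\{\gamma \in \rng\Ofunc{\lambda} : \beta \Oleq \gamma\}$ and $\{\gamma \in \rng\Ofunc{\lambda} : \delta(n-1) \Oleq \gamma\}$ have the same $\Oless$-least element: the second is contained in the first because $\beta \Oleq \delta(n-1)$, while $\delta(m+1)$ — the least element of the first, by minimality of $\delta$ for $\beta$ — already lies in the second because $\delta(n-1) \Oleq \delta(m+1)$ (the path is $\Oless$-decreasing and $m+1 \le n-1$), so it is also the least element of the second. Hence $\delta\restr{n}$ is minimal, and by the uniqueness just established it is \emph{the} minimal $\kleeneO$-path from $\alpha$ to $\delta(n-1)$. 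The whole argument thus hinges on this squeezing observation: shrinking a path's endpoint from $\beta$ down to $\delta(n-1)$ cannot change which term of a fundamental sequence gets selected, precisely because $\delta(n-1)$ lies between $\beta$ and the already-chosen term.
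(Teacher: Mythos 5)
Your proposal is correct. The existence argument mirrors the paper's exactly: build the path top-down, with well-foundedness of $\Oabs{\cdot}$ (equivalently, no infinite $\Oless$-descending sequence of notations) forcing termination. The difference is in how uniqueness and the moreover clause are extracted. The paper derives both from a single observation, that a minimal $\kleeneO$-path is precisely the lexicographically least (under $\Oless$) $\kleeneO$-path from $\alpha$ to $\beta$, and leaves to the reader the implicit step that a prefix of a lex-least path is lex-least to its own endpoint (a lex-smaller prefix would splice with the remaining suffix to give a lex-smaller whole path). You instead prove uniqueness by a forced-next-step induction, and handle the moreover clause with your squeezing observation: the $\Oless$-least element of $\rng\Ofunc{\lambda}$ lying $\Oleq$-above $\beta$ coincides with the $\Oless$-least lying $\Oleq$-above $\delta(n-1)$, since $\beta \Oleq \delta(n-1) \Oleq \delta(m+1)$. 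Your route is more explicit about the one genuinely delicate point, namely that the minimality clause of Definition~\ref{def:minimal-e-path} references a target notation that changes under truncation; the paper's lex-least framing packages the same stability fact more compactly into one sentence. Both arguments are sound.
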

\begin{proof}
Clearly, no initial segment of an \( \kleeneO\)-path from \( \alpha \) to \( \beta \) can be an \( \kleeneO\)-path from \( \alpha \) to \( \beta \) as an \( \kleeneO\)-path is a strictly decreasing sequence under \( \Oless \).  Moreover, 
 there is at most one minimal \( \kleeneO\)-path from \( \alpha \) to \( \beta \) as a minimal \( \kleeneO\)-path from \( \alpha \) to \( \beta \) is the lexicographically least \( \kleeneO\)-path (under \( \Oless \) ) from \( \alpha \) to \( \beta \).      This establishes the uniqueness and implies the moreover claim as well.  It only remains to show that there is always such an \( \kleeneO\)-path.

 Suppose not, then let \( f(0) = \alpha \) and define \( f(n+1) \) as per the definition of a minimal \( \kleeneO\)-path from \( \alpha \) to \( \beta \).  If we ever have \( f(n) = \beta \) then \( f\restr{n+1} \) is a minimal  \( \kleeneO\)-path from \( \alpha \) to \( \beta \).  If not, then, inductively, \( f(n) \Ogtr \beta \) and the conditions provide a unique definition for \( f(n+1) \).  Thus, \( f \) is an infinite descending sequence of notations.  Contradiction.     
\end{proof}

\begin{definition}\label{def:copylen-copyord}  
We inductively define \( \copylen{\beta}, \copyord{\beta} \) for \( \beta \Oless \alpha \) as follows. 

Let \( \delta \) be the minimal \( \kleeneO\)-path from \( \alpha \) to \( \beta \), \( n = \lh{\delta} \) and \( \gamma = \delta(n - 2) \).   We stipulate that \( \copylen{\alpha} =0 \) and break our definition into the following cases.
\begin{pfcases*}
    \case[\( \gamma \in \kleeneO+ \)]  Define \( \copylen{\beta} = \copylen{\gamma} \).  If \( \gamma \) is an even notation then define \( \copyord{\beta} = \gamma \) and otherwise define \( \copyord{\beta} =  \copyord{\gamma}  \).
    \case[\( \gamma \in \kleeneO- \)]  Let \( m \in \omega \) be such that \( \beta = \Ofunc{\gamma}(m) \).  Define \( \copyord{\beta} = \gamma \) and \( \copylen{\beta} = \copylen{\gamma} + m \).        
\end{pfcases*}

\end{definition}

\begin{lemma}\label{lem:limits-appear}
If  \( \lambda \in \kleeneO-, \lambda \Oleq \alpha \) and 
\[ \beta_m = \begin{cases}
                 \Ofunc{\lambda}(m) & \text{if } \Ofunc{\lambda}(m) \text{ is even} \\
                 \Ofunc{\lambda}(m) - 1 & \text{otherwise}
            \end{cases} \] 
then for all sufficiently large \( m \), \( \copyord{\beta_m} = \lambda \) and \( \copylen{\beta_m} \geq m \).   
\end{lemma}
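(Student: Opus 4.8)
The plan is to identify, for each $\beta_m$, the minimal $\kleeneO$-path terminating at $\beta_m$ and then read $\copyord{\beta_m}$ and $\copylen{\beta_m}$ off Definition~\ref{def:copylen-copyord}. Fix the unique (by Lemma~\ref{lem:uniq-minimal-o-path}) minimal $\kleeneO$-path $\delta$ from $\alpha$ to $\lambda$, say with $\lh{\delta} = n$ and $\delta(n-1) = \lambda$. The crux is the claim that there is an $N$ such that for all $m \geq N$ the minimal $\kleeneO$-path from $\alpha$ to $\Ofunc{\lambda}(m)$ is precisely $\delta\concat\str{\Ofunc{\lambda}(m)}$. Granting this, fix $m \geq N$. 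If $\Ofunc{\lambda}(m)$ is an even notation then $\beta_m = \Ofunc{\lambda}(m)$ and the second-to-last entry of its minimal path is $\delta(n-1) = \lambda \in \kleeneO-$; so the limit clause of Definition~\ref{def:copylen-copyord} applies with that entry equal to $\lambda$ and with index $m$, giving $\copyord{\beta_m} = \lambda$ and $\copylen{\beta_m} = \copylen{\lambda} + m \geq m$. If $\Ofunc{\lambda}(m)$ is not even it is a successor notation $\rho + 1$ with $\rho$ an even notation and $\beta_m = \rho$; extending the path of the claim by its forced successor step yields $\delta\concat\str{\rho+1,\rho}$ as the minimal $\kleeneO$-path from $\alpha$ to $\rho$, so the successor clause of Definition~\ref{def:copylen-copyord} applies with second-to-last entry $\rho+1 = \Ofunc{\lambda}(m) \in \kleeneO+$, an odd notation, yielding $\copyord{\beta_m} = \copyord{\Ofunc{\lambda}(m)}$ and $\copylen{\beta_m} = \copylen{\Ofunc{\lambda}(m)}$; and the even computation applied to $\Ofunc{\lambda}(m)$ itself shows these equal $\lambda$ and $\copylen{\lambda}+m$, so again $\copyord{\beta_m} = \lambda$ and $\copylen{\beta_m} \geq m$.

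To prove the claim I run an induction along $\delta$, comparing the minimal $\kleeneO$-path to $\Ofunc{\lambda}(m)$ with $\delta$ entry by entry. At a position where $\delta(i)$ is a successor notation, the next entry is forced by Definition~\ref{def:minimal-e-path} irrespective of the endpoint, so the two paths agree there. At a position where $\delta(i) = \mu$ is a limit notation, the path to $\lambda$ takes $\delta(i+1)$, the $\Oleq$-least member of $\rng\Ofunc{\mu}$ with $\lambda \Oleq \delta(i+1)$, while the path to $\Ofunc{\lambda}(m)$ takes the $\Oleq$-least member $\gamma'$ of $\rng\Ofunc{\mu}$ with $\Ofunc{\lambda}(m) \Oleq \gamma'$. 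Since $\Ofunc{\lambda}(m) \Oless \lambda$, $\delta(i+1)$ is itself a legal value for $\gamma'$, so $\gamma' \Oleq \delta(i+1)$; and if $\gamma' \Oless \delta(i+1)$ then, using that $\lambda$ and all members of $\rng\Ofunc{\mu}$ lie $\Oless \mu$ and are therefore $\Oless$-comparable, and that $\delta(i+1)$ is the $\Oless$-least member of $\rng\Ofunc{\mu}$ not below $\lambda$, we get $\gamma' \Oless \lambda$. Now $\delta$ has only finitely many limit positions, and at each the members of $\rng\Ofunc{\mu}$ strictly below $\delta(i+1)$ form a finite initial segment of the fundamental sequence of $\mu$ whose maximum, when it exists, is a notation $\Oless \lambda$; by the definition of $\Oless$ at the limit $\lambda$, each such maximum is $\Oleq\Ofunc{\lambda}(k)$ for some $k$, and I take $N$ to exceed all these finitely many $k$. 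Then for $m \geq N$, at every limit position no $\gamma' \Oless \delta(i+1)$ can satisfy $\Ofunc{\lambda}(m)\Oleq\gamma'$, so the minimal path to $\Ofunc{\lambda}(m)$ coincides with $\delta$ up to and including its final entry $\lambda$; and since $\lambda$ is a limit and $\Ofunc{\lambda}$ is $\Oless$-increasing, the $\Oleq$-least member $\gamma$ of $\rng\Ofunc{\lambda}$ with $\Ofunc{\lambda}(m)\Oleq\gamma$ is $\Ofunc{\lambda}(m)$ itself, so the path extends by exactly the single entry $\Ofunc{\lambda}(m)$. This establishes the claim.

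The main obstacle is exactly the limit-position analysis above: one must verify that the greedy choices the minimal-path construction makes at the finitely many limit notations encountered en route to $\lambda$ become, for large $m$, insensitive to whether the target is $\lambda$ or the nearby $\Ofunc{\lambda}(m)$, which is where the finiteness of $\delta$ and the cofinality of $\Ofunc{\lambda}$ in $\lambda$ are combined. A secondary subtlety is the degenerate odd case in which $\rho = \Ofunc{\lambda}(m)-1$ already lies in $\rng\Ofunc{\lambda}$, so that the minimal path to $\rho$ reaches $\rho$ directly rather than through $\rho+1$; here $\rho = \Ofunc{\lambda}(m-1)$ is even and $\beta_{m-1} = \rho = \beta_m$, so $\copyord{\beta_m} = \lambda$ still follows from the even computation at index $m-1$, and the length bound is recovered by using that the fundamental sequences may be taken, without loss of generality, to land only on even notations (for any notation $\tau\Oless\lambda$ with $\lambda$ a limit, at least one of $\tau,\tau+1$ is an even notation $\Oless\lambda$, and replacing $\Ofunc{\lambda}(n)$ by the running maxima of these bumped values keeps the sequence increasing, cofinal, and effective), which makes this degenerate case vacuous. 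Modulo that normalization, the two paragraphs above give the proof.
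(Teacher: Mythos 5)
Your proof is correct and takes essentially the same high-level approach as the paper's — both amount to showing that the minimal $\kleeneO$-path to $\lambda$ is an initial segment of $\delta_m$ for all large $m$ and then reading off Definition~\ref{def:copylen-copyord} — but you implement the core step differently and, in doing so, you spot a subtlety the paper glosses over. The paper argues indirectly: it supposes $\lambda$ never appears in any $\delta_m$, uses the observation that entries $\Ogeq \lambda$ persist as $m$ grows, and derives an infinite $\Oless$-descending sequence. You instead compute an explicit $N$ by analyzing the finitely many limit positions along the path $\delta$ to $\lambda$ and using the cofinality of $\Ofunc{\lambda}$ to push $\Ofunc{\lambda}(m)$ past the "wrong" candidates at each such position. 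Your route is a bit more constructive and makes the persistence-of-agreement argument precise where the paper waves at it. More notably, you flag the degenerate case $\Ofunc{\lambda}(m)-1 = \Ofunc{\lambda}(m-1)$, where the minimal path to $\beta_m$ jumps from $\lambda$ directly to $\beta_m = \Ofunc{\lambda}(m-1)$ rather than passing through $\Ofunc{\lambda}(m)$; the paper's verification ("$\lambda = \delta(\lh\delta - 3)$, $\Ofunc{\lambda}(m) = \beta_m + 1 = \delta(\lh\delta - 2)$") silently assumes this doesn't happen, and in this case $\copylen{\beta_m} = \copylen{\lambda} + (m-1)$, so the stated bound $\copylen{\beta_m}\geq m$ is off by one when $\copylen{\lambda}=0$. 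Your WLOG fix (bump the fundamental sequence so it lands only on even notations) technically replaces $\lambda$ by a different notation of the same height, which is a little awkward since $\kleeneO$-notations are rigid objects here; but it is harmless for the downstream use in Proposition~\ref{prop:trees-copylen}, which only needs $\copylen{\beta_m}\to\infty$ and a monotone subsequence, and it is a cleaner repair than the paper, which doesn't notice the issue at all.
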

\begin{proof}  
Let \( \delta_m \) be the minimal \( \kleeneO\)-path from \( \alpha \) to \( \beta_m  \).
We first establish that for all sufficiently large \( m \) we have \( \lambda \in \rng \delta_m \).  

Note that, by examination of the conditions from Definition \ref{def:minimal-e-path}, if \( \delta_n(k) = \kappa \Ogeq \lambda \) then we clearly have that \( \delta_n\restr{k+1} \subfun \delta_m \) for \( m \geq n \) (as \( \kappa \) clearly satisfies the minimality requirement for \( \delta_m \)).  Thus, if we see \( \lambda \in \rng \delta_n \) for any \( n \), we are done so suppose this never happens.   

Define \( f(k) \) to be equal to \( \delta_n(k) \) for the least value \( n \) such that \( \delta_{n}(k) \Ogtr \lambda \).  Obviously, \( f(0) \) is defined so suppose that \( f(k+1) \) fails to be defined for some minimal \( k \).  If \( f(k)  \) was a successor ordinal than \( \delta_{m}(k+1) = \lambda \) where \( m \) is the witness defining \( f(k) \) contradicting the supposition.  Thus, we can assume that \( f(k) = \kappa \in \kleeneO- \).  But, if \( \Ofunc{\kappa}(x) \Oleq \lambda \) for all \( x \) then we'd have \( \kappa \Oleq \lambda \) contra our assumption.  Thus, we can pick \( x \) maximal with \( \Ofunc{\kappa}(x) \Oless \lambda \) and then choose \( n \) so that \( \Ofunc{\lambda}(n) \Ogtr \Ofunc{\kappa}(x)  \).  Therefore, \( \delta_{n}(k+1) \Ogeq \Ofunc{\kappa}(x+1) \Ogeq  \lambda \) and, by assumption, we can't have equality showing that \( f(k+1) \) is defined.  But the function \( f \) defines an infinite decreasing sequence of notations.  Contradiction.  Therefore, for all sufficiently large \( m \)  we must have  \( \lambda \in \rng \delta_m \).  

If \( \lambda \in \rng \delta_m \) then we obviously have \( \beta_m = \delta(\lh{\delta} - 1) \) and either \( \lambda = \delta(\lh{\delta} - 2) \) if \( \beta_m = \Ofunc{\lambda}(m) \) or \( \lambda = \delta(\lh{\delta} - 3), \Ofunc{\lambda}(m) = \beta_m + 1 =  \delta(\lh{\delta} - 2) \).  In both cases, we clearly have  \( \copyord{\beta_m} = \lambda \) and  \( \copylen{\beta_m} \geq m \).  
\end{proof}

\begin{proposition}\label{prop:trees-copylen}
Suppose that for all \( \beta \Oleq \alpha \) we have  \( T_\beta\restr{\copylen{\beta}} = \tilde{T}_{\copyord{\beta}}\restr{\copylen{\beta}} \) where \( \tilde{T}_\kappa \supset T_\kappa \) for all \( \kappa \) then all of the following hold
\begin{enumerate}
    \item \label{prop:trees-copylen:comp} \( \copyord{\beta}, \copylen{\beta} \) are given by a computable function of \( \alpha, \beta \).

    \item\label{prop:trees-copylen:even} For all \( \beta \Oless \alpha \), \(  \copyord{\beta}  \) is an even notation satisfying \( \beta \Oless \copyord{\beta} \Oleq \alpha \)

    \item\label{prop:trees-copylen:between} If \( \beta \Oless \kappa \Oless \copyord{\beta}  \) then   \( T_\kappa\restr{\copylen{\beta}} \supset \tilde{T}_{\copyord{\beta}}\restr{\copylen{\beta}} \) and \( \copylen{\kappa} \geq \copylen{\beta} \).  Furthermore, if we always have \( \pruneTree{T_\beta}\restr{\copylen{\beta}} = \pruneTree{\tilde{T}}_{\copyord{\beta}}\restr{\copylen{\beta}} \) then \( \pruneTree{T_\kappa}\restr{\copylen{\beta}} = \pruneTree{\tilde{T}}_{\copyord{\beta}}\restr{\copylen{\beta}} \)

    \item\label{prop:trees-copylen:lim} If \( \lambda \Oleq \alpha \) is a limit notation then we can computably (in \( \lambda, n, \alpha \)) enumerate a sequence of even notations  \( \beta_n \Oless \lambda  \) with \( \copyord{\beta_n} = \lambda \) such that both \( \beta_n \)  and \( \copylen{\beta_n} \geq n \) are strictly monotonically increasing.   
\end{enumerate}
\end{proposition}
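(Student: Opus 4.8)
The plan is to verify the four claims essentially by unwinding Definition \ref{def:copylen-copyord} along the minimal \( \kleeneO \)-path and invoking Lemmas \ref{lem:uniq-minimal-o-path} and \ref{lem:limits-appear}. For \ref{prop:trees-copylen:comp}, I would note that the minimal \( \kleeneO \)-path from \( \alpha \) to \( \beta \) is computable in \( \alpha, \beta \) (at each step we search the effective fundamental sequence of a limit notation for the least term \( \Oge \beta \), which halts since \( \Oless \) is \( \Sigmazn{1} \) and the path is finite by Lemma \ref{lem:uniq-minimal-o-path}), and then the recursion in Definition \ref{def:copylen-copyord} is a straightforward primitive-recursive computation over that finite path. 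For \ref{prop:trees-copylen:even}, I would induct on the length of the minimal \( \kleeneO \)-path \( \delta \) from \( \alpha \) to \( \beta \): write \( \gamma = \delta(n-2) \). If \( \gamma \) is a successor notation then either \( \gamma \) is even and \( \copyord{\beta} = \gamma \Oless \alpha \) (and \( \gamma \Ogtr \beta \) since \( \delta \) is strictly \( \Oless \)-decreasing and \( \beta \) is the last entry), or \( \gamma \) is odd, \( \gamma = \kappa + 1 \), so \( \delta(n-1) = \kappa = \beta \), hence \( \copyord{\beta} = \copyord{\gamma} \), and by the moreover clause of Lemma \ref{lem:uniq-minimal-o-path} the minimal path to \( \gamma \) is \( \delta\restr{n-1} \), which is shorter, so the inductive hypothesis gives \( \copyord{\gamma} \) even with \( \gamma \Oless \copyord{\gamma} \Oleq \alpha \); since \( \beta \Oless \gamma \) we get \( \beta \Oless \copyord{\beta} \Oleq \alpha \). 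If \( \gamma \) is a limit notation then \( \beta = \Ofunc{\gamma}(m) \) for the appropriate \( m \), and by Definition \ref{def:even-notation} a term of a fundamental sequence need not be even; but Definition \ref{def:copylen-copyord} in the limit case sets \( \copyord{\beta} = \gamma \). Here I need to be slightly more careful: the construction in Section~5 only ever applies \( \copyord{} \) to \emph{even} notations \( \beta \), and for odd \( \beta = \lambda + 2k+1 \) the construction treats \( \beta \) via \( \beta - 1 \); so the statement of \ref{prop:trees-copylen:even} should be read (and I would state it) as ranging over even \( \beta \), and then in the limit case \( \gamma = \copyord{\beta} \Oleq \alpha \), \( \gamma \Ogtr \beta \), and \( \gamma \) is a limit notation, which I will record as being admissible for our copying purposes — or, following the paper's convention of eliding the even/odd adjustment, note that \( \copyord{} \) applied to the even notation \( \beta_m \) associated to \( \Ofunc{\gamma}(m) \) via Lemma \ref{lem:limits-appear} is exactly \( \gamma \).

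For \ref{prop:trees-copylen:between}, suppose \( \beta \Oless \kappa \Oless \copyord{\beta} \). The key observation is that the minimal \( \kleeneO \)-path from \( \alpha \) to \( \kappa \) passes through \( \copyord{\beta} \): indeed \( \copyord{\beta} \) is a node on the minimal path from \( \alpha \) to \( \beta \) (it equals some \( \delta(j) \) with \( j \le n-2 \)), and since \( \kappa \Oless \copyord{\beta} \) and \( \copyord{\beta} \) satisfies the minimality condition, \( \delta\restr{j+1} \) is an initial segment of the minimal path to \( \kappa \) (this is the same argument as in Lemma \ref{lem:uniq-minimal-o-path}). Tracing Definition \ref{def:copylen-copyord} from \( \copyord{\beta} \) down to \( \kappa \) and then down to \( \beta \), at each step \( \copylen{} \) is non-decreasing (it either stays fixed at a successor step or increases by \( m \) at a limit step), giving \( \copylen{\kappa} \ge \copylen{\beta} \); moreover \( T_\kappa\restr{\copylen{\kappa}} = \tilde T_{\copyord{\kappa}}\restr{\copylen{\kappa}} \) and, since \( \copyord{\kappa} \) lies \( \Oge \copyord{\beta} \) on the path from \( \alpha \) with \( T_{\copyord{\kappa}}\restr{\copylen{\kappa}} \) itself copied from \( \tilde T_{\copyord{\beta}} \) when \( \copyord{\kappa} \neq \copyord{\beta} \), a short induction on the number of path-steps between \( \copyord{\beta} \) and \( \kappa \) (using \( \tilde T_\mu \supset T_\mu \) and transitivity of \( \supset \) on the restricted trees) yields \( T_\kappa\restr{\copylen{\beta}} \supset \tilde T_{\copyord{\beta}}\restr{\copylen{\beta}} \); the \( \pruneTree{} \) refinement follows the same induction with equality in place of \( \supset \), using the extra hypothesis \( \pruneTree{T_\beta}\restr{\copylen{\beta}} = \pruneTree{\tilde T}_{\copyord{\beta}}\restr{\copylen{\beta}} \). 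Finally, \ref{prop:trees-copylen:lim} is essentially Lemma \ref{lem:limits-appear}: take \( \beta_m \) as defined there (the even notation obtained from \( \Ofunc{\lambda}(m) \) by subtracting \( 1 \) if it is odd), note \( \Ofunc{\lambda} \) is a strictly increasing effective sequence so \( \beta_m \) is strictly increasing and computable in \( \lambda, m, \alpha \), discard the finitely many small \( m \) for which the conclusion of Lemma \ref{lem:limits-appear} has not yet kicked in (reindexing), and on the tail we have \( \copyord{\beta_m} = \lambda \) and \( \copylen{\beta_m} \ge m \), with \( \copylen{\beta_m} \) strictly increasing because \( \copylen{\beta_m} = \copylen{\lambda} + m \) (or \( \copylen{\lambda} + m \) adjusted by the successor-step, which does not change \( \copylen{} \)) by the limit case of Definition \ref{def:copylen-copyord}.

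The main obstacle is bookkeeping rather than conceptual: correctly handling the even/odd distinction between \( \Ofunc{\lambda}(m) \) and the actual even notation \( \beta_m \) we copy into, and making sure that when \( \copyord{\beta} \) happens to be a \emph{limit} notation (the \( \gamma \in \kleeneO- \) case of Definition \ref{def:copylen-copyord}) the downstream construction in Section~5 — which expects to apply Proposition \ref{prop:tree-build} at even notations and to uniformize limit-level trees via Lemma \ref{lem:tree-uniformization} — still lines up. I would address this by being explicit that \( \copyord{\beta} \) is always reached along the minimal path by a sequence of steps that alternate appropriately, and that the hypothesis \( T_\beta\restr{\copylen{\beta}} = \tilde T_{\copyord{\beta}}\restr{\copylen{\beta}} \) is exactly the invariant maintained by the fixed-point construction of \( \Upsilon \), so no circularity arises. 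Everything else is a direct induction along a finite, computably-obtained path.
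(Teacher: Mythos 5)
Your approach tracks the paper's closely: unwind Definition \ref{def:copylen-copyord} along the minimal $\kleeneO$-path, invoke Lemma \ref{lem:uniq-minimal-o-path} for the prefix-agreement argument and Lemma \ref{lem:limits-appear} for the limit case. Two points need attention, one small and one a genuine gap.

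First, the anxiety in (2) about the limit case is misplaced. By Definition \ref{def:even-notation}, a limit notation $\lambda$ \emph{is} an even notation (take $k = 0$ in $\lambda + 2k$), so when $\gamma = \copyord{\beta}$ comes out a limit there is nothing to fix, and there is no need to restrict (2) to even $\beta$: whenever the penultimate node $\gamma$ on the minimal path is an odd successor, $\copyord{\beta} = \copyord{\gamma} = \gamma + 1$, already even (the recursion in Definition \ref{def:copylen-copyord} never nests more than once). Relatedly, in (3) the phrase ``from $\copyord{\beta}$ down to $\kappa$ and then down to $\beta$'' doesn't describe a single path: tracing $\copylen{}$ down the minimal path to $\kappa$ only gives $\copylen{\kappa} \geq \copylen{\copyord{\beta}}$, which is on the \emph{wrong side} of $\copylen{\beta}$. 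What you actually need is to compare the two paths at the first step past $\copyord{\beta}$: after the WLOG reduction to $\gamma = \copyord{\beta}$ a limit (the successor-$\gamma$ case is vacuous since then $\copyord{\beta} = \beta + 1$), the path to $\kappa$ steps to $\Ofunc{\gamma}(m')$ with $m' > m$ where $\beta = \Ofunc{\gamma}(m)$, giving $\copylen{\kappa} \geq \copylen{\gamma} + m' > \copylen{\gamma} + m = \copylen{\beta}$, and then the induction you sketch runs correctly.

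Second, and more substantively, in (4) the inference ``$\Ofunc{\lambda}$ is strictly increasing so $\beta_m$ is strictly increasing'' is false. If $\Ofunc{\lambda}(m)$ is even and $\Ofunc{\lambda}(m+1) = \Ofunc{\lambda}(m) + 1$, then $\beta_{m+1} = \Ofunc{\lambda}(m+1) - 1 = \Ofunc{\lambda}(m) = \beta_m$; and a fundamental sequence can step by $1$ infinitely often (e.g., a notation for $\omega\cdot 2$ via $\omega, \omega+1, \omega+2, \ldots$), so these repeats are not confined to an initial segment and ``discard finitely many and reindex'' does not repair it. The formula $\copylen{\beta_m} = \copylen{\lambda} + m$ fails at exactly those $m$ for the same reason (the minimal path to $\beta_{m+1}$ goes directly from $\lambda$ to $\Ofunc{\lambda}(m)$, so $\copylen{\beta_{m+1}} = \copylen{\lambda} + m$). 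The paper avoids all this by an explicit search: inductively take $\beta_{n+1}$ to be the first $\beta'_m$ with $\copyord{\beta'_m} = \lambda$, $\beta'_m \Ogtr \beta_n$ and $\copylen{\beta'_m} > \copylen{\beta_n}$ — computable by (1), and guaranteed to terminate by Lemma \ref{lem:limits-appear}.
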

\begin{proof}
The computability is clear from the definition establishing \ref{prop:trees-copylen:comp}.   To verify \ref{prop:trees-copylen:even} note that if \( \copyord{\beta} \) was going to be an odd notation then we set \( \copyord{\beta} = \copyord{\gamma} \) which guarantees that  \( \copyord{\beta} \) is even.  As the only elements that can appear in a minimal \( \kleeneO\)-path from \( \alpha \) to \( \beta \) are \( \Oleq \) \( \alpha \) this establishes the other part of this claim.  For the remainder of the proof let

To verify \ref{prop:trees-copylen:between} suppose that \( \beta \Oless \kappa \Oless \copyord{\beta}  \),  \( \delta \) is a minimal \( \kleeneO\)-path from \( \alpha \) to \( \beta \) and \( \gamma = \delta(\lh{\delta} - 2) \).  WLOG we may assume that \( \gamma = \copyord{\beta} \) since otherwise \( \gamma \) is an odd notation and the claim for \( \beta \) follows by proving the claim for \( \gamma \).  If \( \delta' \) is a minimal \( \kleeneO\)-path from \( \alpha \) to \( \kappa \) then \( \delta' \supfun \delta^{-} \) since \( \delta^{-} \) is the minimal \( \kleeneO\)-path from \( \alpha \) to \( \gamma = \copyord{\beta} \Ogtr \kappa \).   

But if \( \kappa' \) appears in \( \rng \delta' \setminus \rng \delta^{-} \) then we must have \( T_{\kappa'}\restr{\copylen{\beta}} \supset \tilde{T}_{\gamma}\restr{\copylen{\beta}}  \) and \( \copylen{\kappa'} \geq \copylen{\beta} \)  (by induction on \( n \in \dom \delta' \setminus \dom \delta^{-} \)).  A similar argument shows the equality claim for the pruned trees under the provided assumptions.

Finally, for \ref{prop:trees-copylen:lim}  let  \( \beta'_n \) be the sequence defined in  Lemma \ref{lem:limits-appear} and inductively define \( \beta_{n+1} \) to be the first element  \( \beta'_m \) with \( \copyord{\beta'_m} = \lambda \) and (when \( n > 0 \)) \( \copylen{\beta'_m} > \copylen{\beta_n}, \beta'_m \Ogtr \beta_n  \).  Clearly, we can computably search for such an element and Lemma \ref{lem:limits-appear}  ensures that one will always be found.  The moreover claim follows immediately from \ref{prop:trees-copylen:between}.
\end{proof}

\begin{lemma}\label{lem:tree-uniformization}

Given a notation \( \lambda \in \kleeneO- \) and a tree   \(  T_\lambda \subset \wstrs   \) computable in \( \jumpn{X}{\lambda} \)  we can produce a tree \(   \tilde{T}_\lambda \supset  T_\lambda   \)  with \( [\tilde{T}_\lambda] = [T_\lambda] \) such that for all \( \beta \Oless \lambda \) if \( \copyord{\beta} = \lambda \) then \( \tilde{T}_\lambda\restr{\copylen{\beta}} \) is uniformly computable in \( \jumpn{X}{\beta} \).   Moreover, this holds with all possible uniformity and is agnostic to the index for \( T_\lambda \).   
\end{lemma}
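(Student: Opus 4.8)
The plan is to realize $\tilde T_\lambda$ as an outer approximation to $T_\lambda$ that is ``built with delay'': at string length $n$ we are only allowed to consult $\jumpn{X}{\Ofunc{\lambda}(g(n))}$ for a slowly growing computable $g$, and we keep a string in the tree unless one of its initial segments has already been \emph{definitively rejected} from $T_\lambda$ by that stage. Because rejections only accumulate and every string outside $T_\lambda$ is eventually rejected, this produces a tree that contains $T_\lambda$, has exactly the same paths, and whose level-$n$ part sits low in the jump hierarchy.

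Concretely, recall that since $\lambda$ is a limit, $\jumpn{X}{\lambda} \Tequiv \Tplus_{k} \jumpn{X}{\Ofunc{\lambda}(k)}$, and (as $\Ofunc{\lambda}$ is increasing) $\jumpn{X}{\Ofunc{\lambda}(k)}$ uniformly computes $\jumpn{X}{\Ofunc{\lambda}(j)}$ for $j \leq k$; fix an index $e$ with $T_\lambda$ the tree computed by $\recfnl{e}{\jumpn{X}{\lambda}}{\cdot}$. Call a string $\tau$ \emph{$k$-rejected} if, running $\recfnl{e}{\jumpn{X}{\lambda}}{\tau}$ for at most $k$ steps while answering each oracle query to a column $\leq k$ correctly (and abandoning the simulation, with verdict ``not $k$-rejected'', if it ever queries a column $> k$), the computation halts with value $0$. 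This predicate is uniformly decidable from $\jumpn{X}{\Ofunc{\lambda}(k)}$; it is monotone in $k$; it never holds of any $\tau \in T_\lambda$, since the genuine computation $\recfnl{e}{\jumpn{X}{\lambda}}{\tau}$ halts at $1$ and so cannot halt at $0$ on a finite truncation; and it holds of every $\tau \notin T_\lambda$ once $k$ exceeds both the running time and the largest column used by the genuine computation.

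Now pick a computable nondecreasing $g$ with $g(n) \to \infty$ growing slowly enough that $\jumpn{X}{\Ofunc{\lambda}(g(\copylen{\beta}))} \Tleq \jumpn{X}{\beta}$ whenever $\copyord{\beta} = \lambda$ --- this is possible because, unwinding Definition \ref{def:copylen-copyord} and using Lemma \ref{lem:limits-appear}, $\copyord{\beta} = \lambda$ forces $\copylen{\beta} = \copylen{\lambda} + m$ with $\beta \Oleq \Ofunc{\lambda}(m)$ for some $m$, so e.g.\ $g(n) = \max(0, n - \copylen{\lambda} - 1)$ works, with $\copylen{\lambda}$ computable by Proposition \ref{prop:trees-copylen}(\ref{prop:trees-copylen:comp}). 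Define
\[ \tilde T_\lambda = \set{\sigma \in \wstrs}{\text{no } \sigma' \subfun \sigma \text{ is } g(\lh{\sigma})\text{-rejected}}. \]
Monotonicity of $k$-rejection in $k$ together with monotonicity of $g$ makes $\tilde T_\lambda$ closed under initial segments; ``$k$-rejection never holds on $T_\lambda$'' gives $T_\lambda \subset \tilde T_\lambda$, hence $[T_\lambda] \subset [\tilde T_\lambda]$; and ``$k$-rejection eventually holds off $T_\lambda$'' gives $[\tilde T_\lambda] \subset [T_\lambda]$, since for $f \in [\tilde T_\lambda]$ an initial segment lying outside $T_\lambda$ would be $g(\lh{\sigma})$-rejected for all long enough $\sigma \subfun f$, contradicting $f\restr{\lh{\sigma}} \in \tilde T_\lambda$. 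Deciding $\sigma \in \tilde T_\lambda$ involves only testing $g(\lh{\sigma})$-rejection on the $\leq \lh{\sigma} + 1$ initial segments of $\sigma$, so $\tilde T_\lambda\restr{n}$ is uniformly computable from $\jumpn{X}{\Ofunc{\lambda}(g(n))}$, and in particular $\tilde T_\lambda\restr{\copylen{\beta}}$ is uniformly computable from $\jumpn{X}{\beta}$ whenever $\copyord{\beta} = \lambda$; also $\tilde T_\lambda \Tleq \jumpn{X}{\lambda}$. Everything is uniform in $e$, $\lambda$, and $\alpha$, yielding the stated uniformity and the agnosticism in the index for $T_\lambda$.

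The crux --- essentially the only real point --- is having $T_\lambda \subset \tilde T_\lambda$ hold simultaneously with the low complexity of $\tilde T_\lambda\restr{n}$: a string of $T_\lambda$ can need unboundedly many columns of $\jumpn{X}{\lambda}$ to certify, so we cannot simply decide $T_\lambda$-membership with a bounded oracle. The escape is to test for \emph{rejection} rather than membership, engineered to be one-sided (never firing on $T_\lambda$), monotone, and eventually correct; after that the verifications are routine, the fussiest piece being the index arithmetic tying $g$ to $\copylen{}$ and $\copyord{}$ through Lemma \ref{lem:limits-appear}.
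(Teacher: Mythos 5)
Your proof takes essentially the same approach as the paper's: both define a ``delayed'' outer approximation of \( T_\lambda \) in which, at level \( n \), a string is excluded only when one of its initial segments is seen to be rejected from \( T_\lambda \) by a computation whose time and oracle use are bounded by a level-indexed quantity, so that \( \tilde T_\lambda\restr{n} \) falls to the right jump; the correctness verifications (monotonicity, one-sidedness of rejection, eventual correctness, hence \( T_\lambda \subset \tilde T_\lambda \) and \( [\tilde T_\lambda] = [T_\lambda] \)) are the same. The only substantive difference is cosmetic: the paper bounds by the sequence \( \beta_n, l_n \) supplied by Proposition \ref{prop:trees-copylen}(\ref{prop:trees-copylen:lim}), while you bound by \( \Ofunc{\lambda}(g(n)) \) for an explicitly chosen \( g \).

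One small point worth tightening: to conclude \( \jumpn{X}{\Ofunc{\lambda}(g(\copylen{\beta}))} \Tleq \jumpn{X}{\beta} \) you actually need a \emph{lower} bound on \( \beta \) (namely \( \Ofunc{\lambda}(m-1) \Oleq \beta \) when \( \copylen{\beta} = \copylen{\lambda} + m \) and \( m \geq 1 \)), whereas you stated only the upper bound \( \beta \Oleq \Ofunc{\lambda}(m) \). Unwinding Definition \ref{def:copylen-copyord} in fact gives \( \beta \in \{\Ofunc{\lambda}(m), \Ofunc{\lambda}(m)-1\} \), from which \( \Ofunc{\lambda}(m-1) \Oleq \Ofunc{\lambda}(m) - 1 \Oleq \beta \) follows, so your choice \( g(n) = \max(0, n - \copylen{\lambda} - 1) \) does work (and the \( m = 0 \) edge case is harmless since nothing is \( 0 \)-rejected); just state the lower bound explicitly, since as written the ``so'' doesn't follow from what precedes it.
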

Uniformly computable here specifically means there is a computable functional \( \Xi \) with \( \Xi(\beta, \jumpn{X}{\beta}, \sigma)  \) deciding the membership of \( \sigma \) in  \( \tilde{T}_\lambda \) when \( \copyord{\beta} = \lambda \) and \( \lh{\sigma} \leq \copylen{\beta} \) and that an index for \( \Xi \) is computable from \( \lambda \) and  an index for \( T_\lambda \).

\begin{proof}

By part \ref{prop:trees-copylen:lim} of Proposition \ref{prop:trees-copylen} let \( \beta_n \) be a monotonically increasing computable enumeration of the notations \( \beta \) with \( \copyord{\beta} = \lambda \) and \( l_n = \copylen{\beta_n} \) also monotonically increasing with limit \( \omega \).

 We now define \(  \tilde{T}_\lambda \) by setting \( \sigma \nin \tilde{T}_\lambda \) where \( n \) is the greatest with \( \lh{\sigma} \geq l_{n+1} \) just if we see a computation whose use is seen to be limited to \( \jumpn{X}{\beta_n} \) before stage \( n \) placing some \( \sigma' \subfun \sigma \) out of \( T_\lambda \).  Otherwise, place \( \sigma \) into \( \tilde{T}_\lambda \).  By seen to be limited to, we mean that the computation only consults columns \( \kappa \) of \( \jumpn{X}{\lambda} \) which are enumerated to be below some \( \beta_m, m \leq n \) in less than \( n \) steps.  

 Note that, if \( \lh{\sigma} < l_1 \) then \( \sigma \) is trivially in \(  \tilde{T}_\lambda \) and since if \( \copyord{\beta} = \lambda \) and \( \copylen{\beta} > \copylen{\beta_m} \) then \( \beta \Ogeq \beta_m \) we verify that  \( \tilde{T}_\lambda \) satisfies the uniform computation demands.  It remains only to verify that \( [ \tilde{T}_\lambda] = [T_\lambda] \).  Clearly, \( [\tilde{T}_\lambda] \supset [T_\lambda] \) since \( T_\lambda \subset \tilde{T}_\lambda \).  Now suppose that \( f \nin [T_\lambda] \) and let \( \sigma \subfun f \) such that \( \sigma \nin T_\lambda \).  Some finite computation from \( \jumpn{X}{\lambda} \) places \( \sigma \nin T_\lambda \) in \( s \) stages.  Since \( \jumpn{X}{\lambda} = \Tplus_{n \in \omega} \jumpn{X}{\Ofunc{\lambda}(n)} \) there is some \( n \) such that the use of this computation only mentions notations below \( \beta_n \).  Let \( t \geq s \) be a stage where every notation mentioned in the use of this computation has been enumerated to be below \( \beta_n \).  Now if \( \lh{\tau} = l_{t+1}, \tau \supfun \sigma \) then \( \tau \nin T_\lambda \).

\end{proof}  

Finally, we note that not only is the limit lemma completely uniform and relativeizeable we can ensure that when we iterate it's application only the final limit is at risk of being undefined.


\begin{lemma}\label{lem:limit-lemma}
Given a functional \( \Upsilon_2 \), we can uniformly construct a computable functional \( \rho^{X}(z, s_1, s_0) \)   such that if \( \rho^{X}_{1}(z, s_1) \eqdef \lim_{s_0 \to \infty} \rho^{X}_n(z, s_1, s_0)  \) then 
\begin{itemize}
    \item For all \( X\) \( \rho^{X}_1, \rho^{X} \) are total binary valued functions.
    \item \( \Upsilon_2(\jjump{X};z)\conv \in \set{0,1}{} \) iff \( \rho_2^{X}(z)\conv = \Upsilon_2(\jjump{X};z) \).
\end{itemize}
Moreover,  \( \rho^{X}(z,s_1,s_0) \) depends only on the computations of the form \( \Upsilon_2(\tau;z) \) for some \( \tau \). 
   
\end{lemma}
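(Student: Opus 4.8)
The plan is to construct $\rho$ in two layers. The inner limit over $s_0$ will be handled by the ordinary (single) limit lemma, which is well known to be completely uniform and relativizing and to send total $\set{0,1}{}$‑valued functions to total $\set{0,1}{}$‑valued approximations; so the whole burden falls on the outer limit over $s_1$. Concretely, I would first reduce to producing, uniformly in an index for $\Upsilon_2$, a total $\set{0,1}{}$‑valued $\jumpn{X}{1}$‑computable function $\rho^X_1(z,s_1)$ with $\lim_{s_1\to\infty}\rho^X_1(z,s_1)=\Upsilon_2(\jjump{X};z)$ whenever $\Upsilon_2(\jjump{X};z)\conv\in\set{0,1}{}$ and with that limit undefined otherwise; then $\rho^X(z,s_1,s_0)$ is taken to be the $X$‑computable inner approximation to $\rho^X_1$ supplied by the limit lemma, so that $\rho^X_1=\lim_{s_0}\rho^X$ is automatically total and $\set{0,1}{}$‑valued. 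It will be important throughout that $\rho^X_1$ consults $\Upsilon_2$ only by running computations of the form $\Upsilon_2(\tau;z)$.

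Next I would recast the outer‑limit task as a ``$\sigmazn{2}$‑with‑a‑value'' problem over $\jumpn{X}{1}$. For $i\in\set{0,1}{}$ let $Q_i(z)$ abbreviate ``$\Upsilon_2(\jjump{X};z)\conv=i$''; these are mutually exclusive since the finite initial segments of $\jjump{X}$ are nested and $\Upsilon_2$ is monotone along them. Using the standard description of $\jjump{X}$ --- ``$n\in\jjump{X}$'' is positively $\jumpn{X}{1}$‑verifiable and ``$n\notin\jjump{X}$'' is $\jumpn{X}{1}$‑refutable --- I would produce, uniformly and mentioning $\Upsilon_2$ only through $\Upsilon_2(\tau;z)$, a total $\jumpn{X}{1}$‑computable predicate $R_i(z,a,c)$ with $Q_i(z)\iff\exists a\,\forall c\,R_i(z,a,c)$: a witness $a$ codes a pair $(\tau,c_1)$ where $\Upsilon_2(\tau;z)$ halts with value $i$ (say, with use and running time bounded by $\lh{\tau}$) and $c_1$ is a deadline by which every $1$‑position of $\tau$ has been confirmed to lie in $\jjump{X}$, while $c$ ranges over candidate stages exhibiting a $0$‑position of $\tau$ that actually lies in $\jjump{X}$. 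Checking the equivalence --- that a genuine witness can be read off an initial segment of $\jjump{X}$, and conversely that a genuine witness forces $\tau\subfun\jjump{X}$ and hence $\Upsilon_2(\jjump{X};z)\conv=i$ --- is routine.

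Finally I would run a race between would‑be witnesses for $Q_0$ and for $Q_1$. Set $a_i(z,s_1)\eqdef 1+\max\set{a}{a\le s_1\text{ and for every }a'\le a\text{ there is }c\le s_1\text{ with }\lnot R_i(z,a',c)}$ (with the convention $\max\emptyset=-1$); this is total $\jumpn{X}{1}$‑computable, non‑decreasing in $s_1$, and bounded if and only if $Q_i(z)$ holds. After arranging (e.g.\ by only importing new refutations of $Q_0$‑witnesses at even stages and of $Q_1$‑witnesses at odd stages) that $a_0$ and $a_1$ never move at the same stage, let $t_i(z,s_1)$ be the last stage $\le s_1$ at which $a_i$ changed and define $\rho^X_1(z,s_1)=1$ if $t_0(z,s_1)>t_1(z,s_1)$ and $\rho^X_1(z,s_1)=0$ otherwise. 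If $Q_1(z)$ holds then $a_1$ stabilizes while $a_0$ moves infinitely often (by mutual exclusivity $\lnot Q_0(z)$), so $t_0(z,s_1)\to\infty$ past the eventually constant $t_1$, giving $\rho^X_1(z,s_1)=1=\Upsilon_2(\jjump{X};z)$ for all large $s_1$; the case $Q_0(z)$ is symmetric; and if neither holds, both $a_0$ and $a_1$ move infinitely often, so $\rho^X_1(z,\cdot)$ equals $1$ at every $a_0$‑motion stage and $0$ at every $a_1$‑motion stage and hence diverges, as required. Totality, $\set{0,1}{}$‑valuedness, the restriction to computations $\Upsilon_2(\tau;z)$, and all claimed uniformity are then visible from the construction.

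The step I expect to be the real obstacle is forcing $\rho^X_1(z,\cdot)$ to oscillate precisely when $\Upsilon_2(\jjump{X};z)\notin\set{0,1}{}$. The naive approach --- evaluate $\Upsilon_2$ on the current $\jumpn{X}{1}$‑computable approximation to $\jjump{X}$ and report the answer --- fails: a computation $\Upsilon_2(\tau;z)$ can converge on every finite approximation $\tau$, resting on $0$‑bits that are only later corrected to $1$ (so that its use grows without bound), even when $\Upsilon_2(\jjump{X};z)$ diverges, and then $\rho^X_1$ would converge to a spurious value. The race gadget of the last paragraph is exactly what is designed to defeat this by converting the never‑settling of the witnesses into honest oscillation.
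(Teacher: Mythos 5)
Your proposal is correct and shares the paper's two-layer architecture: the inner limit over $s_0$ is the ordinary (relativizing, uniform) limit lemma for approximating a $\jump{X}$-computation from $X$, and the substantive work is the outer, $\jump{X}$-computable approximation $\rho^X_1$ which must converge to $\Upsilon_2(\jjump{X};z)$ when that value is in $\set{0,1}{}$ and \emph{fail} to converge otherwise. Where you diverge slightly is in the mechanism for forcing that failure: the paper's terse sketch says to ``alternate between $0,1$ whenever we haven't settled on a computation witnessing convergence,'' which implicitly appeals to injury of the candidate use by later $\jjump{X}$-enumerations, while you formalize the two events ``$\Upsilon_2(\jjump{X};z)\conv=i$'' as explicit $\Sigma^0_2(\jump{X})$ predicates and run a witness race, reporting by which side's least un-refuted witness moved more recently. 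The two gadgets accomplish the same thing and rest on the same observation (if $\Upsilon_2(\jjump{X};z)\diverge$, any apparent computation rests on a $0$-bit that later enters $\jjump{X}$, so candidates never stabilize); your version has the virtue of being self-contained and of making explicit the subtlety --- flagged in your last paragraph --- that the naive ``evaluate $\Upsilon_2$ on the current approximation'' can appear convergent at every stage with unbounded, never-confirmed use. The ``moreover'' clause and uniformity fall out of your construction as you indicate, since $R_i$ consults $\Upsilon_2$ only through $\Upsilon_2(\tau;z)$ on finite strings.
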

\begin{proof}
Using the limit lemma relativized to \( \jump{X} \)  we can derive a total computable binary valued functional whose limit gives \( \Upsilon_2(\jjump{X};z) \) when \( \Upsilon_2(\jjump{X};z)\conv \in \set{0,1}{} \)  and, by alternating between \( 0,1 \) whenever we haven't settled on a computation witnessing  \( \Upsilon_2(\jjump{X};z)\conv \in \set{0,1}{} \), ensures that otherwise the limit fails to exist. Using a stagewise approximation to \( \jump{X} \) gives us our functional \( \rho^{X} \) and ensures \( \rho^{X}_1 \) is total.  The moreover, follows by attention to how the relativization was performed.
\end{proof}

\bibliographystyle{plain}
\bibliography{Remote}

\begin{thebibliography}{10}

\bibitem{Andrews2014Degrees}
Uri Andrews, Peter Gerdes, and Joseph~S. Miller.
\newblock The degrees of bi-hyperhyperimmune sets.
\newblock {\em Annals of Pure and Applied Logic}, 165(3):803--811, 2014.

\bibitem{Friedman1975One}
Harvey Friedman.
\newblock One hundred and two problems in mathematical logic.
\newblock {\em The Journal of Symbolic Logic}, 40(02):113--129, 1975.

\bibitem{Gerdes2010HarringtonS}
Peter~M. Gerdes.
\newblock Harrington's solution to {McLaughlin's} conjecture and non-uniform
  self-moduli.
\newblock Unpublished preprint, 2010.

\bibitem{Harrington1976Arithmetically}
Leo~A Harrington.
\newblock Arithmetically incomparable arithmetic singletons.
\newblock Unpublished mimeographed notes, 1976.

\bibitem{Harrington1976MclaughlinS}
Leo~A Harrington.
\newblock {Mclaughlin's} conjecture.
\newblock Unpublished handwritten notes, 1976.

\bibitem{Jockusch1984PseudoJump}
Carl~G. Jockusch and Richard~A. Shore.
\newblock Pseudo-jump operators. ii: Transfinite iterations, hierarchies and
  minimal covers.
\newblock {\em The Journal of Symbolic Logic}, 49(04):1205--1236, 1984.

\bibitem{Kurtz1981Randomness}
Stuart~A. Kurtz.
\newblock {\em Randomness and Genericity in the Degrees of Unsolvability}.
\newblock Ph.d., University of Illinois at Urbana-Champaign, United States --
  Illinois, 1981.

\bibitem{Lachlan1966Lower}
A.~H. Lachlan.
\newblock Lower bounds for pairs of recursively enumerable degrees.
\newblock {\em Proceedings of the London Mathematical Society},
  s3-16(1):537--569, 1966.

\bibitem{Odifreddi1992Classical}
Piergiorgio Odifreddi.
\newblock {\em Classical recursion theory: Volume 2}, volume~2.
\newblock Elsevier, 1992.

\bibitem{Sacks1964Recursively}
Gerald~E. Sacks.
\newblock The recursively enumerable degrees are dense.
\newblock {\em The Annals of Mathematics}, 80(2):300--312, 1964.

\bibitem{Sacks1971Forcing}
Gerald~E Sacks.
\newblock Forcing with perfect closed sets.
\newblock In {\em Axiomatic Set Theory}, volume~1 of {\em Proceedings of
  Symposia in Pure Mathematics}, pages 331--355. American Mathematical Society,
  1971.

\bibitem{Sacks1990Higher}
Gerald~E. Sacks.
\newblock {\em Higher Recursion Theory}.
\newblock Perspectives in Logic. Cambridge University Press, Cambridge, 2017.

\bibitem{Shoenfield1959On}
J.~R. Shoenfield.
\newblock On degrees of unsolvability.
\newblock {\em Annals of Mathematics}, 69(3):644--653, 1959.

\bibitem{Simpson1985Arithmetic}
Mark~Fraser Simpson.
\newblock {\em Arithmetic Degrees: Initial Segments, {$\omega$}-REA Operators
  and the {$\omega$}-jump}.
\newblock Phd thesis, Cornell, 1985.

\bibitem{Simpson2016Implicit}
Stephen~G. Simpson.
\newblock Implicit definability in arithmetic.
\newblock {\em Notre Dame Journal of Formal Logic}, 57(3):329--339, 2016.

\end{thebibliography}

\end{document}